\DeclareSymbolFontAlphabet{\mathbbm}{bbold}
\DeclareSymbolFontAlphabet{\mathbb}{AMSb}%
\tikzset{my_dot/.style={fill, circle, inner sep=0pt,minimum size=3pt}}
\tikzset{my_node/.style={fill, circle, inner sep=0pt,minimum size=3pt}}
\tikzset{inv/.style={fill, circle, inner sep=0pt,minimum size=0pt}}
\newtheorem{theorem}{Theorem}[section]
\newtheorem{definition}[theorem]{Definition}
\newtheorem{proposition}[theorem]{Proposition}
\newtheorem{lemma}[theorem]{Lemma}
\newtheorem{corollary}[theorem]{Corollary}
\numberwithin{equation}{subsection}
\theoremstyle{definition}
\newtheorem{remark}[theorem]{Remark}
\newtheorem{example}[theorem]{Example}
\def\C{\ensuremath{\mathbb{C}}}
\def\Q{\ensuremath{\mathbb{Q}}}
\def\R{\ensuremath{\mathbb{R}}}
\def\Z{\ensuremath{\mathbb{Z}}}
\def\cD{\ensuremath{\mathcal{D}}}
\def\cM{\ensuremath{\mathcal{M}}}
\def\cX{\ensuremath{\mathcal{X}}}
\DeclareMathOperator{\Aut}{Aut}
\DeclareMathOperator{\colim}{colim}
\DeclareMathOperator{\Gr}{Gr}
\DeclareMathOperator{\Hom}{Hom}
\DeclareMathOperator{\Mod}{Mod}
\DeclareMathOperator{\Out}{Out}
\DeclareMathOperator{\val}{val}
\def\col{\colon}
\def\Dg{\Delta_g}
\def\Gmw{{\bf G}}
\def\inj{\mathrm{inj}}
\def\Jg{\mathbbm{\Gamma}_{g}}
\def\JX{J_{X}}
\def\Mg{M^{\trop}_{g}} 
\def\ocM{\overline{\cM}}
\def\ocX{\overline{\cX}}
\def\ov{\overline}
\def\trop{\mathrm{trop}}
\def\Sets{\mathrm{Sets}} 
\newcommand{\double}{\genfrac..{0pt}1
{\raise -2pt\hbox{$\scriptstyle\longrightarrow$}}{\raise 4pt\hbox
{$\scriptstyle\longrightarrow$}}}
\begin{document}

\title{Tropical curves, graph complexes, and top weight cohomology of $\cM_g$}

\author{Melody Chan}
\email{melody\_chan@brown.edu}

\author{S{\o}ren Galatius}
\email{galatius@math.ku.dk}

\author{Sam Payne}
\email{sampayne@utexas.edu}

\begin{abstract}
  We study the topology of a space $\Dg$ parametrizing stable tropical curves of genus $g$ with volume $1$, showing that its reduced rational homology is canonically identified with both the top weight cohomology of $\cM_g$ and also with the genus $g$ part of the homology of Kontsevich's graph complex.  Using a theorem of Willwacher relating this graph complex to the Grothendieck--Teichm\"uller Lie algebra, we deduce that $H^{4g-6}(\cM_g;\Q)$ is nonzero for $g=3$, $g=5$, and $g \geq 7$, and in fact its dimension grows at least exponentially in $g$.  This disproves a recent conjecture of Church, Farb, and Putman as well as an older, more general conjecture of Kontsevich.  We also give an independent proof of another theorem of Willwacher, that homology of the graph complex vanishes in negative degrees.
 \end{abstract}

\maketitle

\tableofcontents

\section{Introduction}

Fix an integer $g\ge2$. In this paper, we study the topology of a space $\Dg$ that parametrizes isomorphism classes of genus $g$ {\em tropical curves} of volume 1.  Tropical curves are certain weighted, marked metric graphs; see \S\ref{subsec:J-g-n} for the precise definition.
  
Interest in the space $\Dg$ is not limited to tropical geometry. Indeed, $\Dg$ may be identified homeomorphically with the following spaces: 

\begin{enumerate}
\item the link of the vertex in the tropical moduli space $\Mg$ \cite{acp, BrannettiMeloViviani11};

\item the dual complex of the boundary divisor in $\ocM_{g}$, the algebraic moduli space of stable curves of genus $g$ (Corollary~\ref{cor_dual_complex});

\item the quotient of the simplicial completion of Culler--Vogtmann outer space by the action of the outer automorphism group $\Out(F_g)$ \cite[\S 5.2]{ConantVogtmann03}, \cite[\S 2.2]{Vogtmann15}; 

\item the topological quotient of Harvey's complex of curves on a surface of genus $g$ by the action of the mapping class group \cite{Harvey81}; and

\item the topological quotient of Hatcher's complex of sphere systems in certain a 3-manifold  by the action of the mapping class group of that manifold \cite{Hatcher95}.
\end{enumerate}

\noindent Our primary focus will be on the interpretations (1) and especially (2) from tropical and algebraic geometry: we apply combinatorial topological calculations on $\Delta_g$ to compute previously unknown invariants of the complex algebraic moduli space $\cM_g$.  One such application gives a lower bound on the size of $H^{4g-6}(\cM_g,\Q)$, as follows.

\begin{theorem}  \label{thm:nonvanishing}
The cohomology $H^{4g-6}(\cM_g;\Q)$ is nonzero for $g =3$, $g = 5$, and $g \geq 7$.  
Moreover, $\dim H^{4g-6}(\cM_g;\Q)$ grows at least exponentially.  More precisely, $$\dim H^{4g-6}(\cM_g;\Q) > \beta^g + \text{constant}$$ for any $\beta< \beta_0$, where $\beta_0 \approx 1.3247\ldots$ is the real root of $t^3-t-1=0$. 
\end{theorem}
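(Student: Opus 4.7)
My plan is to reduce this theorem to a lower bound on the graded dimension of the Grothendieck--Teichm\"uller Lie algebra $\mathfrak{grt}_1$, via a chain of three identifications.

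First, using interpretation (2) from the introduction, $\Delta_g$ is the dual complex of the boundary divisor of the smooth proper Deligne--Mumford stack $\overline{\cM}_g$. Deligne's mixed Hodge theory identifies the lowest-weight part of compactly supported cohomology of any smooth variety with the reduced cohomology of such a dual complex; combined with Poincar\'e duality on the smooth $\cM_g$ (of dimension $3g-3$), this gives
\[
\mathrm{Gr}^W_{6g-6}\, H^{4g-6}(\cM_g;\Q) \;\cong\; \tilde{H}_{2g-1}(\Delta_g;\Q).
\]
Since the top-weight piece injects into $H^{4g-6}(\cM_g;\Q)$, any lower bound on the right transfers to the left, and the theorem reduces to a lower bound on $\dim \tilde{H}_{2g-1}(\Delta_g;\Q)$.

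Second, I would compare the cellular chain complex of $\Delta_g$ with Kontsevich's graph complex in genus $g$. The cells of $\Delta_g$ are indexed by stable weighted graphs of genus $g$, with top-dimensional cells (dimension $3g-4$) coming from trivalent graphs with no vertex weights; these are precisely the generators of the ``degree zero'' part of Kontsevich's complex in its standard grading, and the shift $2g-1$ reflects the Euler characteristic of a trivalent genus-$g$ graph. The cellular boundary map corresponds, after a careful comparison of signs and orientations, to the graph-complex differential that contracts an edge. This would yield
\[
\tilde{H}_{2g-1}(\Delta_g;\Q) \;\cong\; H^0(GC_2)^{(g)},
\]
the loop-order-$g$ summand of $H^0$ of Kontsevich's graph complex $GC_2$. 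Willwacher's theorem then supplies an isomorphism $H^0(GC_2) \cong \mathfrak{grt}_1$ of graded Lie algebras, with loop order matching the standard grading on $\mathfrak{grt}_1$.

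Third, to produce classes in $(\mathfrak{grt}_1)_g$, I invoke Brown's theorem: the free Lie algebra on generators $\sigma_3,\sigma_5,\sigma_7,\dots$, one in each odd degree $\geq 3$, embeds in $\mathfrak{grt}_1$. The generators themselves immediately give nonvanishing in $g=3,5,7$ and in all odd $g\geq 7$; brackets $[\sigma_3,\sigma_{2k+1}]$ for $2k+1\geq 5$ cover the even cases $g\geq 8$. The case $g=6$ is genuinely missed, since $[\sigma_3,\sigma_3]=0$ in any Lie algebra, consistent with the statement. For the exponential growth, the Hilbert series of the universal enveloping algebra of this free Lie subalgebra is
\[
\frac{1}{\,1-\sum_{k\geq 1} t^{2k+1}\,} \;=\; \frac{1-t^2}{\,1-t^2-t^3\,},
\]
whose dominant pole is at $t=1/\beta_0$ with $\beta_0^3-\beta_0-1=0$; it follows by standard transfer-matrix asymptotics, or directly from Witt's formula, that $\dim(\mathfrak{grt}_1)_g\gtrsim \beta^g$ for every $\beta<\beta_0$, which yields the quantitative bound.

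The main obstacle, in my view, lies in Step 2: pinning down the precise, degree- and sign-respecting comparison between the cellular chain complex of $\Delta_g$ and Kontsevich's graph complex $GC_2$, including the interaction with the symmetric group actions on vertices and edges and the resulting orientation conventions. The remaining steps are conceptually cleaner: Step 1 is a now-standard mixed-Hodge-theoretic computation once the dual-complex interpretation is in hand, and Step 3 uses Willwacher and Brown as essentially black-box inputs.
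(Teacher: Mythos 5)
Your proposal follows the paper's argument exactly: Step 1 is the paper's Theorem~\ref{thm:comparison} (mixed Hodge theory identifying top-weight cohomology of $\cM_g$ with reduced homology of the boundary complex $\Delta_g$), Step 2 is Theorem~\ref{thm:gc} (the cellular chain complex of $\Delta_g$ splits as a degree-shifted copy of $G^{(g)}$ plus an acyclic summand), and Step 3 is precisely the paper's proof of Theorem~\ref{thm:non-vanishing-graph-homology}, invoking Willwacher's isomorphism with $\mathfrak{grt}_1$, Brown's free Lie subalgebra on the $\sigma_{2i+1}$, and the same PBW/M\"obius-inversion asymptotics for $\mathrm{Lie}(V)$. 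Two small corrections worth flagging: $\mathrm{Gr}^W_{6g-6}H^{4g-6}(\cM_g;\Q)$ is a \emph{quotient} of $H^{4g-6}(\cM_g;\Q)$ rather than a subspace (equally good for lower bounds), and in Step 2 the trivalent graphs (dimension $3g-4$ cells) lie in graph-complex degree $g-3$, not degree zero --- the degree-zero generators have $2g$ edges (e.g.\ the wheel $W_g$), corresponding to $(2g-1)$-simplices, consistent with the final isomorphism $\widetilde{H}_{2g-1}(\Delta_g;\Q)\cong H_0(G^{(g)})$ that you do state correctly.
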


\noindent The nonvanishing for $g = 3$ was known previously; Looijenga famously showed that the unstable part of $H^6(\cM_3;\Q)$ has rank 1 and weight 12 \cite{Looijenga93}.  

To put Theorem~\ref{thm:nonvanishing} in context, recall that the virtual cohomological dimension of $\cM_g$ is $4g-5$ \cite{Harer86}.  Church, Farb, and Putman conjectured that, for each fixed $k > 0$, $H^{4g-4-k}(\cM_g;\Q)$ vanishes for all but finitely many $g$ \cite[Conjecture~9]{ChurchFarbPutman14}.  While this is true for $k = 1$  \cite{ChurchFarbPutman12, MoritaSakasaiSuzuki13},   Theorem~\ref{thm:nonvanishing} shows that it is false for $k = 2$.  
Furthermore, as observed by Morita, Sakasai, and Suzuki \cite[Remark~7.5]{MoritaSakasaiSuzuki15}, the Church-Farb-Putman conjecture is implied by a more general statement conjectured by Kontsevich two decades earlier \cite[Conjecture~7C]{Kontsevich93}, which we now recall.  In the same paper where he introduced the graph complex, Kontsevich studied three infinite dimensional Lie algebras, whose homologies are free 
graded commutative algebras generated by subspaces of primitive elements.  Each contains the primitive homology of the Lie algebra $\mathrm{sp}(2\infty)$ as a direct summand.  
For one of these Lie algebras, denoted $a_\infty$, the complementary primitive homology is
\[
PH_k(a_\infty) / PH_k (\mathrm{sp}(2\infty)) \ \cong  \bigoplus_{m >0, 2g-2+m >0} H^{4g-4+2m-k} (\mathcal{M}_{g,m}/S_m; \Q),
\]
where $S_m$ denotes the symmetric group acting on the moduli space $\cM_{g,m}$ of curves with $m$ marked points by permuting the markings.  See \cite[Theorem~1.1(2)]{Kontsevich93}. 

Kontsevich conjectured that the homology of each of these Lie algebras should be finite dimensional in each degree.  In particular, for each $k$, the cohomology group $H^{4g-2-k}(\cM_{g,1};\Q)$ should vanish for all but finitely many $g$.  
Note that the composition
$$H^*(\cM_g;\Q)\rightarrow H^*(\cM_{g,1};\Q)\rightarrow H^{*+2}(\cM_{g,1};\Q),$$
where the second map is cup product with the Euler class, is injective. This is because further composing with Gysin pushforward to $H^*(\cM_g;\Q)$ gives multiplication by $2-2g < 0$.  Therefore, 
Theorem~\ref{thm:nonvanishing} shows that $PH_2(a_\infty)$ is infinite dimensional, disproving Kontsevich's conjecture and giving a negative answer to \cite[Problem~7.4]{MoritaSakasaiSuzuki15}.

Theorem~\ref{thm:nonvanishing}, and further applications discussed in Section~\ref{sec:applications}, will be established via combinatorial topological calculations on the space $\Delta_g$, which may be identified with the {\em dual complex} of the Deligne-Mumford stable curve compactification $\ocM_g$ of $\cM_g$.  
Throughout, we work with varieties and Deligne-Mumford stacks over $\C$.
Recall that Deligne has defined a natural weight filtration on the rational singular cohomology of any complex algebraic variety which gives, together with the Hodge filtration on singular cohomology with complex coefficients, a mixed Hodge structure \cite{Deligne71, Deligne74b}.
The graded pieces of the weight filtration on the cohomology of a $d$-dimensional variety are supported in degrees between $0$ and $2d$, and we refer to the $2d$-graded piece, denoted $\Gr_{2d}^W$, as the {\em top weight cohomology}.  We will use the interpretation of $\Delta_{g}$ as the dual complex of the boundary divisor in the Deligne--Mumford compactification of $\cM_{g}$ to give an identification of its reduced rational homology with the top weight-graded piece of the cohomology of $\cM_{g}$.  

\begin{theorem}  \label{thm:comparison}
There is an isomorphism
\[
\Gr_{6g-6}^W H^{6g-6-k} (\cM_{g}; \Q) \xrightarrow{\cong} \widetilde{H}_{k-1}(\Dg;\Q) ,
\]
identifying the top graded piece of the weight filtration on the cohomology of $\cM_{g}$ with the reduced rational homology of $\Dg$.
\end{theorem}

\newcommand{\ellmap}{\lambda}

\noindent Our proof of Theorem \ref{thm:comparison} produces a specific isomorphism, which is in fact induced by a proper map of topological spaces
\begin{equation}\label{eq:4}
  \cM_g \stackrel{\ellmap}\longrightarrow M_g^\trop,
\end{equation}
defined using the hyperbolic model for $\cM_g$, see \S\ref{sec:induced-by-map-of-spaces}.  Compactly supported cohomology is functorial with respect to proper maps, so~\eqref{eq:4} induces maps $\ellmap^* \colon H^k_c(M_g^\trop;\Q) \to H^k_c(\cM_g;\Q)$.  Using Poincar\'e duality for $\cM_g$ and that $\Delta_g$ is the link of the cone point in $M_g^\trop$, this gives
\begin{equation*}
  H^{6g-6-k}(\cM_g;\Q) \cong (H^k_c(\cM_g;\Q))^\vee \stackrel{\ellmap_*}{\longrightarrow}
  (H^k_c(M_g^\trop;\Q))^\vee \cong \widetilde{H}_{k-1}(\Delta_g;\Q).
\end{equation*}
We will see that this is a surjection, factoring over the isomorphism stated in Theorem~\ref{thm:comparison}.  In this sense~\eqref{eq:4} will be a space-level refinement of the map in rational cohomology.

\medskip
The space $\Dg$ is glued out of standard simplices $\Delta^p$ in a way that resembles $\Delta$-complexes, except simplices may be glued to themselves along permutations of their vertices.  We call such objects \emph{symmetric $\Delta$-complexes}, briefly reviewed in Section~\ref{sec:cellular}.  In particular, its rational homology may be calculated by a cellular chain complex.
We will relate the cellular chain complex computing reduced rational homology of $\Delta_g$ to the \emph{commutative graph complex} $G^{(g)}$, introduced by Kontsevich \cite{Kontsevich93, Kontsevich94}.  The precise definition of $G^{(g)}$ is recalled in Section~\ref{sec:konts-graph-compl} and shall not be needed here in the introduction.  The graph complex $G^{(g)}$ has been studied intensively, including in the past few years. See, e.g.,
\cite{ConantVogtmann03, ConantGerlitsVogtmann05, DolgushevRogersWillwacher15, Willwacher15}.

We will construct a quasi-isomorphism from the cellular chain complex computing $\widetilde{H}_*(\Delta_g)$ to $G^{(g)}$.
Passing to homology gives the following:
\begin{theorem}  \label{thm:gc}
  For $g \geq 2$, there is an isomorphism $${H}_k(G^{(g)}) \xrightarrow{\cong} \widetilde{H}_{2g+k-1}(\Delta_{g};\Q).$$
\end{theorem}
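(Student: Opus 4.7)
The plan is to compute $\widetilde{H}_*(\Dg;\Q)$ directly from an explicit chain complex for $\Dg$, identify $G^{(g)}$ (up to a degree shift) as a quotient, and show the kernel is acyclic. Using the machinery of symmetric $\Delta$-complexes developed in Section~\ref{sec:cellular}, I would describe the cellular chain complex $C_*^{\mathrm{cell}}(\Dg;\Q)$ as follows. Generators in degree $d$ are equivalence classes $[\Gamma,\omega]$ where $\Gamma$ is a stable genus-$g$ weighted graph with $d+1$ edges and $\omega$ is an ordering of $E(\Gamma)$, subject to $[\Gamma,\sigma\cdot\omega] = \mathrm{sgn}(\sigma)\,[\Gamma,\omega]$ for every $\sigma \in \Aut(\Gamma)$, and the differential is the alternating sum $\partial[\Gamma,\omega] = \sum_i (-1)^i [\Gamma/e_i,\omega|_{E(\Gamma)\setminus\{e_i\}}]$, where contracting a loop at a vertex $v$ produces a graph with $w(v)$ increased by one. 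A graph with $e$ edges sits in cellular degree $e-1$ and in $G^{(g)}$-degree $e-2g$, so these degrees differ by $2g-1$, matching the asserted homological shift.

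Next I would single out the subcomplex $W_* \subset C_*^{\mathrm{cell}}(\Dg;\Q)$ spanned by those classes $[\Gamma,\omega]$ for which $\Gamma$ has at least one loop or at least one vertex of positive weight, and verify that it is closed under $\partial$: non-loop contractions preserve loops and add vertex weights, while loop contractions eliminate one loop but create a unit of vertex weight. The quotient $C_*^{\mathrm{cell}}(\Dg;\Q)/W_*$ can then be canonically and degree-preservingly identified with $G^{(g)}[2g-1]$. The surviving generators are precisely the loopless weight-zero graphs, which by stability have valence $\geq 3$ at every vertex; the automorphism sign relations coincide on both sides; and the quotient differential retains exactly those edge contractions that do not create a loop, reproducing Kontsevich's convention that contractions producing loops vanish.

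The main obstacle is the acyclicity of $W_*$. My approach would be to construct an explicit degree-$+1$ contracting homotopy $s$ built from a \emph{loop-expansion} operator: on a generator with a chosen vertex $v$ of positive weight, $s$ returns the graph obtained by decrementing $w(v)$ by one and attaching a new loop at $v$, placed at a canonical position in the ordering with the appropriate sign; generators with no positive weight (hence with a loop, by the definition of $W_*$) are paired by the dual loop-contraction. The key identity $\partial s + s\partial = \mathrm{id}$ on $W_*$ then reduces to a local cancellation between contracting a distinguished loop and expanding a distinguished weight unit. The delicate point is choosing the distinguished vertex in a way compatible with the $\Aut(\Gamma)$-sign relations; one clean route is to filter $W_*$ by the total vertex weight $\sum_v w(v)$ (which is non-decreasing under $\partial$) and to produce the homotopy on each associated graded piece separately via a discrete Morse matching pairing loops against weight units. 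Once $W_*$ is acyclic, the quotient map is a quasi-isomorphism and
\[
H_k(G^{(g)}) \;\cong\; H_{k+2g-1}\!\bigl(C_*^{\mathrm{cell}}(\Dg;\Q)\bigr) \;\cong\; \widetilde{H}_{2g+k-1}(\Dg;\Q).
\]
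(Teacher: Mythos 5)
Your overall architecture matches the paper's: you pick out the subcomplex $W_*$ of cellular chains supported on graphs with a loop or a positive-weight vertex, identify the complementary piece with a degree shift of $G^{(g)}$ by $2g-1$, and reduce everything to showing $W_*$ is acyclic. (The paper phrases the complement as a \emph{subcomplex} $A^{(g)}$ rather than as the quotient $C_*/W_*$, observing that $A^{(g)}\oplus W_* \cong C_*$; since $A^{(g)}$ really is a subcomplex — contracting an edge of a loopless weight-zero graph either stays in $A^{(g)}$ or lands on a class that already vanishes because of parallel edges — the two phrasings are interchangeable. This is a cosmetic difference.)

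The genuine gap is in the acyclicity of $W_*$. Your loop-expansion homotopy is the right informal picture, but you do not resolve the problem you yourself flag: the operator ``decrement $w(v)$, attach a loop at $v$'' requires a choice of vertex $v$ and of position in the edge ordering, and there is no canonical $\Aut(\Gamma)$-equivariant such choice on the nose. Worse, the filtration you propose to fix this does not set up the right inductive structure. If you filter by total vertex weight $\sum_v w(v)$ (which $\partial$ does not decrease), the associated graded differential consists \emph{only} of non-loop edge contractions, because contracting a loop strictly increases total weight and is therefore killed in the graded piece. On the associated graded there are no loop contractions left, so a discrete Morse matching ``pairing loops against weight units'' cannot act there — the moves you want to pair live in different filtration levels. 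The paper's proof works with a different and more delicate filtration, by the number of edges that are \emph{not} stems (where a stem is an edge $e$ with $\Gmw/(E(\Gmw)\setminus e)$ isomorphic to the two-vertex graph with weights $1$ and $g-1$), and shows each graded piece decomposes into summands $B^{(g),i}(\Gmw)$ indexed by maximal graphs; acyclicity of each summand then comes from an $\Aut(\Gmw)$-equivariant deformation retraction of $\Delta^{|E(\Gmw)|-1}$ onto the union of facets spanned by the non-stem vertices, passed to the quotient by $\Aut(\Gmw)$. That retraction argument is what replaces your explicit contracting homotopy, and it is where the real work of the theorem lies. As written, your sketch does not get there.
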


Combining Theorems~\ref{thm:comparison} and \ref{thm:gc} then gives a surjection $H^{4g-6-k}(\cM_{g};\Q) \twoheadrightarrow H_{k}(G^{(g)}).$
In particular, nonvanishing graph homology groups yield nonvanishing results for cohomology of $\cM_g$.
The full structure of the homology of the graph complex remains mysterious, but several interesting substructures and many nontrivial classes are known and understood. In particular, the linear dual of $\bigoplus_g H_0(G^{(g)})$ carries a natural Lie bracket, and is isomorphic to the Grothendieck-Teichm\"uller Lie algebra $\mathfrak{grt}_1$ by the main result of \cite{Willwacher15}.  The Lie algebra $\mathfrak{grt}_1$ is known to contain a free Lie subalgebra with a generator in each odd degree $g \geq 3$ (\cite{Brown12}).  These results let us deduce Theorem~\ref{thm:nonvanishing}.

 \medskip

To the best of our knowledge, the only previously known nonvanishing top weight cohomology group on $\cM_g$ is $\Gr^W_{12}H^6(\cM_3,\Q)$, which has rank 1 by the work of Looijenga mentioned above \cite{Looijenga93}.
Once the general setup of the paper is in place, the result of Looijenga's computation of this top weight cohomology group can be recovered immediately. It corresponds to the $1$-dimensional subspace of graph homology spanned by the complete graph on four vertices.  
Note in general that the top weight cohomology of $\cM_{g}$ is non-tautological and unstable, since stable and tautological classes are of weight equal to their cohomological degree.
Thus, the method presented here probes one piece of the unstable cohomology of $\cM_{g}$ that is especially suited to combinatorial study.

\medskip

The identification of top weight cohomology of $\cM_{g}$ with graph homology, provided by Theorems~\ref{thm:comparison} and Theorem~\ref{thm:gc}, also yields interesting nonvanishing results in degrees other than $4g-6$.  For instance, the nontrivial classes in $H_3(G^{(6)})$, $H_3(G^{(8)})$, and $H_7(G^{(10)})$ discovered by Bar-Natan and McKay \cite{BarNatanMcKay} prove nonvanishing of $H^{15}(\cM_6;\Q)$, $H^{23}(\cM_8;\Q)$, and $H^{27}(\cM_{10};\Q)$.  
It appears that the only previously known example of a nonvanishing odd-degree cohomology group of $\cM_g$ is $H^5(\cM_4; \Q)$ which has rank 1 (and weight 6) by \cite{Tommasi05}.  The interest and difficulty in exhibiting odd cohomology classes on $\cM_g$ was highlighted by Harer and Zagier over three decades ago.  They observed that no such classes were known at the time of their writing, and standard methods could produce classes only in even degree, while their Euler characteristic computations showed that such classes are abundant when $g\gg0$ is even: $(-1)^{g+1} \chi(\cM_g)$ grows like $g^{2g}$.  See \cite[p.~458]{HarerZagier86} and \cite[p.~210]{Harer88}.

\medskip

Finally, we may also use the connection between cohomology of $\cM_{g}$ and graph homology to give an application in the other direction, namely from $\cM_g$ to graph complexes.  Using Harer's computation of the virtual cohomological dimension of $\cM_{g}$ \cite{Harer86} and the vanishing of $H^{4g-5}(\cM_g;\Q)$ \cite{ChurchFarbPutman12, MoritaSakasaiSuzuki13}, we give an independent proof of the following recent result of Willwacher \cite[Theorem~1.1]{Willwacher15}.

\begin{theorem} \label{thm:graphhom}
The graph homology groups $H_k(G^{(g)})$ vanish for $k < 0$.
\end{theorem}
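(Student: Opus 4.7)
The plan is to deduce the vanishing directly from the identifications of graph homology with top weight cohomology of $\cM_g$ provided by Theorems~\ref{thm:gc} and \ref{thm:comparison}, combined with known vanishing results at and above the virtual cohomological dimension of $\cM_g$. This makes the theorem a corollary of what has already been set up in the paper together with two external inputs.

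Composing the isomorphism $H_k(G^{(g)}) \cong \widetilde{H}_{2g+k-1}(\Delta_g;\Q)$ from Theorem~\ref{thm:gc} with the isomorphism $\widetilde{H}_{j-1}(\Delta_g;\Q) \cong \Gr^W_{6g-6} H^{6g-6-j}(\cM_g;\Q)$ from Theorem~\ref{thm:comparison}, taken at $j = 2g+k$, I obtain
\[
H_k(G^{(g)}) \;\cong\; \Gr^W_{6g-6} H^{4g-6-k}(\cM_g;\Q).
\]
Thus to establish vanishing for $k<0$, it suffices to show that $H^n(\cM_g;\Q) = 0$ for $n \geq 4g-5$ (the top weight graded piece will then vanish a fortiori). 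This splits into two cases according to $k$. For $k \leq -2$, so that $n = 4g-6-k \geq 4g-4$, the vanishing is immediate from Harer's theorem that the virtual cohomological dimension of $\cM_g$ equals $4g-5$ \cite{Harer86}. The remaining boundary case $k = -1$, corresponding to $n = 4g-5$, is precisely the vanishing $H^{4g-5}(\cM_g;\Q) = 0$ due independently to Church--Farb--Putman \cite{ChurchFarbPutman12} and Morita--Sakasai--Suzuki \cite{MoritaSakasaiSuzuki13}, which the excerpt has already flagged as an input.

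There is no substantive obstacle in executing this argument: once Theorems~\ref{thm:comparison} and \ref{thm:gc} are available, the proof is essentially a matter of indexing bookkeeping together with the invocation of two known vanishing statements about $H^*(\cM_g;\Q)$ near the top of its cohomological range. The only thing worth checking carefully is the arithmetic that aligns the graph-complex degree $k<0$ with the cohomological degree $n \geq 4g-5$, so that Harer's bound covers $k \leq -2$ and the Church--Farb--Putman/Morita--Sakasai--Suzuki theorem fills in the single remaining degree $k=-1$. It is perhaps worth noting that one could hope to shorten the argument further by proving the $k=-1$ vanishing directly on $\Delta_g$, which would yield a self-contained proof within the framework of this paper; this, however, would require genuinely new input, whereas the deduction above is unconditional.
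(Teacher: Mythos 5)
Your proposal is correct and is essentially the paper's own proof: both compose Theorems~\ref{thm:comparison} and \ref{thm:gc} to relate $H_k(G^{(g)})$ to $H^{4g-6-k}(\cM_g;\Q)$, then invoke Harer's virtual cohomological dimension bound for $k \leq -2$ and the Church--Farb--Putman/Morita--Sakasai--Suzuki vanishing of $H^{4g-5}(\cM_g;\Q)$ for $k=-1$. Your explicit split into those two cases is a small presentational refinement of what the paper states more tersely, but the substance and inputs are identical.
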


\medskip

Relations between graph (co)homology and (co)homology of moduli spaces of curves were also considered by Kontsevich, but the relationships he studied are conceptually quite different.  For example, he relates genus $g$ curves to genus $2g$ graph homology where we relate genus $g$ curves to genus $g$ graph homology. 
The three different Lie algebras mentioned above correspond to three different types of decorations on graphs, and each comes with a corresponding graph complex that computes homology (or cohomology) of an appropriate moduli space of decorated graphs.
The Lie algebra $a_\infty$ corresponds to graphs decorated with ribbon structure, and moduli spaces of ribbon graphs are homotopy equivalent to moduli spaces of curves with marked points.  This is related to the fact that a punctured Riemann surface deformation retracts to a graph, which remembers a ribbon structure from the deformation.  The cohomology of $\cM_g$ injects into the cohomology of $\cM_{g,1}$, via pullback to the universal curve, and $\cM_{g,1}$ is homotopy equivalent to a moduli space of ribbon graphs of first Betti number $2g$ that bound exactly $1$ open disk.  Forgetting the ribbon structure gives a proper map from this moduli space of ribbon graphs to a moduli space of undecorated graphs. The rational homology of the latter space is computed by the graph complex $G^{(2g)}$ \cite[Section~3]{Kontsevich93}.  

Here, however, we relate the cohomology of $\cM_g$ to the graph complex $G^{(g)}$, not $G^{(2g)}$.  The graphs appear not as deformation retracts of punctured curves, but rather as dual graphs of stable degenerations.  For a (partially conjectural) picture that reproves our main results and relates $G^{(g)}$ to ribbon graph complexes, via hairy oriented graph complexes, see \cite{AWZ20}.

\medskip

The combinatorial structure of $\Delta_g$ and $M_g^\trop$ is intricately related to both the compactification of $\cM_g$ and to the graph complex $G^{(g)}$.  These topological spaces are not strictly needed for proving the main results of this paper; one could instead work in rational chain complexes throughout and define a map directly from $G^{(g)}$ to the top weight row of the weight spectral sequence associated to the Deligne-Mumford stable curves compactification of $\cM_g$. Equivalently, one may relate $G^{(g)}$ to the Feynman transform of the modular operad that associates the vector space $H^0(\overline{\cM}_{g,n};\Q)$, with its trivial $S_n$-action, to each pair $(g,n)$ with $2g -2 + n > 0$, as in \cite[\S6]{AWZ20}.

Nevertheless, the spaces $\Delta_g$ and $M_g^\trop$ provide an intuitive way to visualize and motivate the corresponding constructions with chain complexes.  Moreover, it is well-known in algebraic topology that maps of spaces carry more information than the induced maps of rational chain complexes or homology groups.  In particular the proper map~\eqref{eq:4} should carry more information than the induced map in compactly supported cohomology. 

We note also that our combinatorial topological methods should apply more generally. Any toroidal compactification $\ov{X}\supset X$ of a variety or DM stack gives rise to a combinatorial dual complex whose simple homotopy type is independent of the choice of compactification \cite{boundarycx,Harper17}.  The reduced homology of this dual complex computes the top weight cohomology of $X$, but the space itself encodes more information.  Moreover, if $X$ and $\ov{X}$ are moduli spaces and the universal family of $\ov{X}$ is a toroidal compactification of that of $X$, then the dual complex of $\ov{X}\setminus X$ typically has a natural interpretation as a tropical moduli space.  In this way, one may expect the methods presented here to apply to other moduli spaces, such as moduli of spin curves, moduli of curves with level structure, and moduli of abelian varieties.

\bigskip

\noindent \textbf{Acknowledgments.}  We are grateful to D.~Abramovich, E.~Getzler, M.~Kahle, A.~Kupers, L.~Migliorini, N.~Salter, C.~Simpson, O.~Tommasi, D.~Turchetti, R.~Vakil, and K.~Vogtmann for helpful conversations related to this work.  
MC was supported by NSF
DMS-1204278, DMS-1701924, CAREER DMS-1844768, a Sloan Fellowship and a
Henry Merritt Wriston Fellowship. SG was supported by NSF DMS-1405001 and the European Research Council (ERC) under the European Union's Horizon 2020 research and innovation programme (grant agreement No 682922), by the EliteForsk Prize, and by the Danish National Research Foundation (DNRF92 and DNRF151).  SP was supported by NSF DMS-1702428 and a Simons Fellowship.

\medskip

\section{Graphs, tropical curves, and moduli}  \label{sec:graphs}

In this section, we recall in more detail the construction of the topological
space $\Dg$ as a moduli space for tropical curves, which are marked
weighted graphs with a length assigned to each edge.

\subsection{Weighted graphs and tropical curves} \label{subsec:J-g-n}

Let $G$ be a finite graph, possibly with loops and parallel edges.
All graphs in this paper will be connected.  Write $V(G)$ and $E(G)$
for the vertex set and edge set, respectively, of $G$.  A {\em
  weighted graph} is a connected graph $G$ together with a function
$w\col V(G)\rightarrow \Z_{\ge 0}$, called the {\em weight function}.
The {\em genus} of $(G,w)$ is
\begin{equation*}
  g(G,w)= b_1(G) + \sum_{v\in V(G)}\! w(v),
\end{equation*}
where $b_1(G) = |E(G)|-|V(G)|+1$ is the first Betti number of $G$.

The {\em
  valence} of a vertex $v$ in a weighted graph, denoted
$\val(v)$, is the number of half-edges of $G$ incident to $v$.  In other words, a loop edge based at
$v$ counts twice towards $\val(v)$, once for each end, and an ordinary
edge counts once. We say that $(G,w)$
is {\em stable} if for every $v\in V(G)$, $$2w(v) -2 + \val(v) > 0.$$
For $g \geq 2$, this is equivalent to the condition that every vertex of weight 0
has valence at least 3.

\subsection{The category $\mathbbm{\Gamma}_{g}$}\label{sec:jgn} The connected
stable graphs of genus $g$ form the objects of a
\emph{category} which we denote $\Jg$.  The morphisms in this
category are compositions of contractions of edges $G \rightarrow G/e$
and isomorphisms $G \rightarrow G'$.  For the sake of removing any
ambiguity about what that might mean, let us give a formal and precise definition of $\Jg$.

Formally, then, a graph $G$ is a finite set $X(G) = V(G) \sqcup H(G)$
(of ``vertices'' and ``half-edges''), together with two functions
$s_G, r_G\colon X(G) \to X(G)$ satisfying $s_G^2 = \mathrm{id}$ and
$r_G^2 = r_G$ and that
$$\{x \in X(G) \mid r_G(x) = x\} = \{x \in X(G) \mid s_G(x) = x\} =
V(G).$$ Informally: $s_G$ sends a half-edge to its other half, while
$r_G$ sends a half-edge to its incident vertex.  We let
$E(G) = H(G)/(x \sim s_G(x))$ be the set of edges.  The definition of
 weights, genus, and stability is as before.

The objects of the category $\Jg$ are all connected stable graphs of genus $g$.
For an object $\Gmw = (G,w)$ we shall write $V(\Gmw)$ for $V(G)$
and similarly for $H(\Gmw)$, $E(\Gmw)$, $X(\Gmw)$, $s_\Gmw$ and
$r_\Gmw$.  Then a morphism $\Gmw \to \Gmw'$ is a function $f\colon
X(\Gmw) \to X(\Gmw')$ with the property that
\begin{equation*}
f \circ r_\Gmw = r_{\Gmw'} \circ f \text{ and }f \circ s_\Gmw =
s_{\Gmw'} \circ f,
\end{equation*}
and subject to the following three requirements:  

\begin{itemize}
\item Each $e \in H(\Gmw')$ determines the subset
  $f^{-1}(e) \subset X(\Gmw)$ and we require that it consists of
  precisely one element (which will then automatically be in
  $H(\Gmw)$).
\item Each $v \in V(\Gmw')$ determines a subset
  $S_v = f^{-1}(v) \subset X(\Gmw)$ and
  $\mathbf{S}_v = (S_v,r\vert_{S_v}, s\vert_{S_v})$ is a graph; we
  require that it be connected and have
  $g(\mathbf{S}_v,w\vert_{\mathbf{S}_v}) = w(v)$.
\end{itemize}

\noindent Composition of morphisms $\Gmw \to \Gmw' \to \Gmw''$ in
$\Jg$ is given by the corresponding composition
$X(\Gmw) \to X(\Gmw') \to X(\Gmw'')$ in the category of sets.

Our definition of graphs and the morphisms between them is standard in
the study of moduli spaces of curves and agrees, in essence, with the
definitions in \cite[X.2]{ACG11} and \cite[\S3.2]{acp}, as well as
those in \cite{KontsevichManin94} and \cite{GetzlerKapranov98}.  In particular, our $\mathbbm{\Gamma}_g$ agrees with the category denoted $\Gamma(\!(g,0)\!)$ in \cite{GetzlerKapranov98}.

\begin{remark}
  We also note that any morphism $\Gmw \to \Gmw'$ can be alternatively
  described as an isomorphism following a finite sequence of
  \emph{edge collapses}: for $e \in E(\Gmw)$ there is a morphism
  $\Gmw \to \Gmw/e$ where $\Gmw/e$ is the marked weighted graph
  obtained from $\Gmw$ by collapsing $e$ together with its two
  endpoints to a single vertex $[e] \in \Gmw/e$.  If $e$ is not a
  loop, the weight of $[e]$ is the sum of the weights of the endpoints
  of $e$ and if $e$ is a loop the weight of $[e]$ is one more than the
  old weight of the end-point of $e$.  If
  $S = \{e_1, \dots, e_k\} \subset E(\Gmw)$ there are iterated edge
  collapses $\Gmw \to \Gmw/e_1 \to (\Gmw/e_1)/e_2 \to \dots$ and any
  morphism $\Gmw \to \Gmw'$ can be written as such an iteration
  followed by an isomorphism from the resulting quotient of $\Gmw$ to
  $\Gmw'$.
\end{remark}

We shall say that $\Gmw$ and $\Gmw'$ have the same \emph{combinatorial
  type} if they are isomorphic in $\Jg$.  In fact there are only
finitely many isomorphism classes of objects in $\Jg$, since any
object has at most $6g-6$ half-edges and $2g-2$ vertices; and for
each possible set of vertices and half-edges there are finitely many
ways of gluing them to a graph, and finitely many possibilities for
the weight function.  In order to get a \emph{small}
category $\Jg$ we shall tacitly pick one object in each
isomorphism class and pass to the full subcategory on those objects.
Hence $\Jg$ is a \emph{skeletal} category.  (Although we shall
usually try to use language compatible with any choice of small
equivalent subcategory $\Jg$.)  It is clear that all Hom sets in
$\Jg$ are finite, so $\Jg$ is in fact a finite category.

Replacing $\Jg$ by some choice of skeleton has the effect that if
$\mathbf{G}$ is an object of $\Jg$ and $e \in E(\mathbf{G})$ is an
edge, then the marked weighted graph $\Gmw/e$ is likely not equal to
an object of $\Jg$.  Given $\mathbf{G}$ and $e$, there is a
\emph{morphism} $q \colon \mathbf{G} \to \mathbf{G}'$ in $\Jg$ factoring
through an isomorphism $\Gmw/e \to \Gmw'$.  The pair
$(\mathbf{G}',q)$ is unique up to unique isomorphism (but of course the map $q$ or the isomorphism
$\Gmw/e \to \Gmw'$ on their own need not be unique).  By an abuse of notation, we shall henceforward
write $\mathbf{G}/e \in \Jg$ for the codomain of this unique
morphism, and similarly $G/e$ for its underlying graph.

\begin{comment}
  % old version with E and H
\begin{definition}\label{def:E-and-H-as-functors}
  Let us define functors
  \begin{equation*}
    H,E\colon \Jg^\mathrm{op} \to (\text{Finite sets},
    \text{injections})
  \end{equation*}
  as follows.  On objects, $H(\Gmw) = H(G)$ is the set of half-edges
  of $\Gmw = (G,w)$ as defined above.  A morphism
  $f\colon \Gmw \to \Gmw'$ determines an injective function
  $H(f)\colon H(\Gmw') \to H(\Gmw)$, sending $e' \in H(\Gmw')$ to the
  unique element $e \in H(\Gmw)$ with $f(e) = e'$.  We shall
  write $f^{-1} = H(f) \colon H(\Gmw') \to H(\Gmw)$ for this map.
  This clearly preserves composition and identities, and hence defines
  a functor.  Similarly for $E(\Gmw) = H(\Gmw)/(x \sim s_G(x))$ and
  $E(f)$.
\end{definition}
\end{comment}

\begin{definition}\label{def:E-and-H-as-functors}
  Let us define a functor
  \begin{equation*}
    E\colon \Jg^\mathrm{op} \to (\text{Finite sets},
    \text{injections})
  \end{equation*}
  as follows.  On objects, $E(\Gmw) = E(G)$ is the set of edges
  of $\Gmw = (G,w)$ as defined above.  A morphism
  $f\colon \Gmw \to \Gmw'$ determines an injective function $E(f)$
 sending $e' \in E(\Gmw')$ to the
  unique element $e \in E(\Gmw)$ with $f(e) = e'$.
  This clearly preserves composition and identities, and hence defines
  a functor.
  \end{definition}

\subsection{Moduli space of tropical curves}  \label{sec:moduli-space}

We now recall the construction of moduli spaces of stable tropical
curves, as the colimit of a diagram of cones parametrizing possible
lengths of edges for each fixed combinatorial type.  The construction
follows \cite{BrannettiMeloViviani11, Caporaso13}.

Fix an integer $g \geq 2$. A \emph{length function} on
$\Gmw = (G,w) \in \Jg$ is an element
$\ell \in \R_{>0}^{E(\Gmw)}$, and we shall think geometrically of
$\ell(e)$ as the \emph{length} of the edge $e \in E(\Gmw)$.  A genus $g$ \emph{stable tropical curve} is then a pair
$\Gamma = (\Gmw,\ell)$ with $\Gmw \in \Jg$ and
$\ell \in \R_{>0}^{E(\Gmw)}$, and we shall say that $(\Gmw,\ell)$ is
\emph{isometric} to $(\Gmw',\ell')$ if there exists an isomorphism
$\phi\colon \Gmw \to \Gmw'$ in $\Jg$ such that
$\ell' = \ell \circ \phi^{-1}\colon E(\Gmw') \to \R_{>0}$.  The
\emph{volume} of $(\Gmw,\ell)$ is
$\sum_{e \in E(\Gmw)} \ell(e) \in \R_{> 0}$.

We can now describe the underlying set of the topological space
$\Delta_{g}$, which is the main object of study in this paper. It is
the set of isometry classes of genus $g$ stable tropical curves of
volume 1.  We proceed to describe its topology and further structure
as a closed subspace of the moduli space of tropical curves.

\begin{definition}\label{definition:mgn}
  Fix $g \geq 2$.  For each object
  $\Gmw \in \Jg$ define the topological space
  \begin{equation*}
    \sigma(\Gmw)= \R_{\geq 0}^{E(\Gmw)}=\{\ell\colon E(\Gmw) \to \R_{\geq
      0}\}.
  \end{equation*}

  For a morphism $f\colon \Gmw \to \Gmw'$ define the continuous map
  $\sigma f \colon \sigma(\Gmw') \to \sigma(\Gmw)$ by
  $$(\sigma f)(\ell') = \ell\colon E(\Gmw) \to \R_{\geq 0},$$ where
  $\ell$ is given by
  
  \begin{equation*}
    \ell(e) =
    \begin{cases} 
      \ell'(e') & \text{ if $f$ sends $e$ to $e'\in E(\Gmw')$},\\
      0 & \text{ if $f$ collapses $e$ to a vertex}.
    \end{cases}
  \end{equation*}
  This defines a functor
  $\sigma\colon \Jg^\mathrm{op} \to \mathrm{Spaces}$ and the
  topological space $\Mg$ is defined to be the colimit of this
  functor.
\end{definition}

In other words, the topological space $\Mg$ is obtained as
follows. For each morphism $f\colon \Gmw\to\Gmw'$, consider the map
$L_f\colon \sigma(\Gmw') \to \sigma(\Gmw)$ that sends
$\ell'\colon E(\Gmw')\to \R_{>0}$ to the length function
$\ell\colon E(\Gmw)\to \R_{>0}$ obtained from $\ell'$ by extending it
to be 0 on all edges of $\Gmw$ that are collapsed by $f$.  So $L_f$
linearly identifies $\sigma(\Gmw')$ with some face of $\sigma(\Gmw)$,
possibly $\sigma(\Gmw)$ itself.  Then
\begin{equation*}
  \Mg = \left(\coprod \sigma(\Gmw)\right) \Big / \{\ell' \sim L_f(\ell')\},
\end{equation*}
where the equivalence runs over all morphisms $f\colon \Gmw\to\Gmw'$
and all $\ell'\in \sigma(\Gmw')$.

As we shall explain in more detail in Section~\ref{sec:cellular}, $\Mg$ naturally
comes with more structure than a plain topological space; it is an
example of a {\em generalized cone complex}, as defined in
\cite[\S2]{acp}.  This formalizes the observation that $\Mg$ is glued
out of the cones $\sigma(\Gmw)$.

The \emph{volume} defines a function
$v\colon \sigma(\Gmw) \to \R_{\geq 0}$, given explicitly as
$v(\ell) = \sum_{e \in E(\Gmw)} \ell(e)$, and for any morphism
$\Gmw \to \Gmw'$ in $\Jg$ the induced map
$\sigma(\Gmw') \to \sigma(\Gmw)$ preserves volume.  Hence there is an
induced map $v\colon \Mg \to \R_{\geq 0}$, and there is a unique element
in $\Mg$ with volume 0 which we shall denote $\bullet_{g}$.  The
underlying graph of $\bullet_{g}$ consists of a single vertex with
weight $g$.

\begin{definition}\label{def:dgn}
  We let $\Dg$ be the subspace of $\Mg$ parametrizing curves of
  volume 1, i.e.,\ the inverse image of $1 \in \R$ under
  $v: \Mg \to \R_{\ge 0}$.
\end{definition}

\noindent Thus $\Dg$ is homeomorphic to the link of $\Mg$ at the
cone point $\bullet_{g}$.  Moreover, it inherits the structure of a
symmetric $\Delta$-complex, as we shall define in
Section~\ref{sec:cellular}, from the generalized cone complex structure on
$\Mg$.  See Remark~\ref{rem:cones}. 
 
\subsection{Kontsevich's graph complex}
\label{sec:konts-graph-compl}

Let us briefly recall the (commutative) graph complex, defined by Kontsevich in \cite{Kontsevich93}.  This chain complex comes in two versions, differing by some important signs.  Kontsevich's original paper is mostly focused on what he calls the ``even'' version of the graph complex; it is related to invariants of odd-dimensional manifolds and by Willwacher's results to deformations of the operad $e_n$ for odd $n$.  This is the same version as considered by e.g. \cite{ConantVogtmann03} and \cite{ConantGerlitsVogtmann05}.  The other version, called ``odd'' in \cite{Kontsevich93}, is related to invariants of even-dimensional manifolds, deformations of the operad $e_n$ for even $n$, and by the main theorem of \cite{Willwacher15} to the Grothendieck--Teichm\"uller Lie algebra.  It is the latter version which is relevant to our paper and shall be recalled here.  Both are considered in \cite{BarNatanMcKay} where they are called the ``fundamental example'' and the ``basic example'' of graph homology, respectively (the assertion of op.cit.\ that the basic example does not occur in nature is of course no longer true).

The graph complex is defined by letting $G^{(g)}$ be the rational vector space generated by $[\Gamma,\omega]$ where $\Gamma$ is a connected graph of genus $g$ all of whose vertices have valence at least 3, in other words an object of $\mathbbm{\Gamma}_g$ in which $w(v) = 0$ for all $v \in V(\Gamma)$.  The ``orientation'' $\omega$ is a total ordering on $E(\Gamma)$,
%the set of edges (not half-edges) of $\Gamma$,
and this notation is subject to the relation $[\Gamma,\omega] = \mathrm{sgn}(\sigma)[\Gamma',\omega']$ if there exists an isomorphism of graphs $\Gamma \cong \Gamma'$ under which the total orderings are related by a permutation $\sigma$.  It follows from this relation that $[\Gamma,\omega] = 0$ if $\Gamma$ has at least two parallel edges, since then there is an automorphism of $\Gamma$ inducing an odd permutation of its edge set.  The boundary map in this chain complex is induced by
\begin{equation}\label{eq:1}
  \partial [\Gamma,\omega] = \sum_{i=0}^n (-1)^i [\Gamma/e_i,\omega\vert_{E(\Gamma/e_i)}],
\end{equation}
where $\omega = (e_0 < e_1 < \dots < e_n)$ is the total ordering on the set $E(\Gamma)$ of edges of $\Gamma$, the graph $\Gamma/e_i$ is the result of collapsing $e_i \subset \Gamma$ to a point, and $\omega\vert_{E(\Gamma/e_i)}$ is the induced ordering on the subset $E(\Gamma/e_i) = E(\Gamma) \setminus \{e_i\} \subset E(\Gamma)$.  In case $e_i \in E(\Gamma)$ is a loop, we interpret the corresponding term in~\eqref{eq:1} as zero.
\begin{example}
  For $g\ge 3$, let $W_g \in G^{(g)}$ be the ``wheel graph'' with $g$ trivalent vertices, one $g$-valent vertex, and $2g$ edges arranged in a wheel shape with $g$ spokes, and with some chosen ordering of its edge set.  The graph underlying $W_5$ is depicted in Figure~\ref{fig:wheel}.  Then $\partial W_g = 0$.  This gives a non-zero cycle for odd $g$, which we also denote $W_g$.
\end{example}

\begin{figure}[h]
\begin{tikzpicture}[my_node/.style={fill, circle, inner sep=1.75pt}, scale=1]
\def\R{.587} %radius
\def\H{.2} %height of dotted ellipse
\def\Ratio{1.42} %pos of marked pt
\def\M{1.2}
\begin{scope}[scale=1]
\node[my_node] (1) at (0,1){};
\node[my_node] (5) at (.95,.31){};
\node[my_node] (3) at (0.59,-.81){};
\node[my_node] (4) at (-.59,-.81){};
\node[my_node] (2) at (-.95,.31){};
\node[my_node] (W) at (0,0){};
\draw[ultra thick] (1)--(W)--(2)--(W)--(3)--(W)--(4)--(W)--(5);
\draw[ultra thick] (1)--(5)--(3)--(4)--(2)--(1);
\end{scope}
\end{tikzpicture}
\caption{The graph $W_5$.}\label{fig:wheel}
\end{figure}
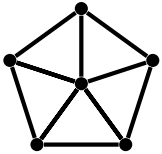

\noindent
Indeed, any contraction of a single edge $e$, spoke or non-spoke, leads to a graph $W_g/e$ with two parallel edges, which then represents the zero element in the graph complex.

  The automorphism group of $W_g$ is isomorphic to $S_4$ when $g=3$ and is isomorphic to the dihedral group $D_{2g}$ when $g>3$, and it is easy to verify that it acts by even permutations on $E(W_g)$ when $g$ is odd.  Hence $W_g \neq 0 \in G^{(g)}$ for odd $g$.  (Notice that so far we are only making the elementary claim that it is non-zero on the chain level, although it in fact turns out to represent a non-zero homology class.)  On the other hand, the involutions in the dihedral group act by odd permutations on $E(W_g)$ for even $g$, and hence $W_g = 0$ in this case.

Grading conventions differ from author to author.
In Kontsevich's original paper, the grading of this chain complex is by number of vertices $|V(\Gamma)|$.  We shall instead use conventions better suited for comparison with \cite{Willwacher15}, in which the degree of $\Gamma$ is $|V(\Gamma)| - (g+1)$.  In this grading the wheel graph has degree 0.  As we shall see later, it also has the effect of making $G^{(g)}$ a connective chain complex, i.e., its homology vanishes in negative degrees.
Willwacher's paper \cite{Willwacher15} considers the linearly dual cochain complex which he denotes $\mathsf{GC}$ or $\mathsf{GC}_2$, so that
\begin{equation*}
  \mathsf{GC} = \prod_{g = 2}^\infty \Hom(G^{(g)}, \Q),
\end{equation*}
where $\Hom(-,\Q)$ denotes the graded dual.  For example, for odd $g$ there is a cochain $W_g^\vee$ given by sending the wheel graph $(W_g,\omega) \mapsto \pm 1$ (sign depending on $\omega$) and any other graph to 0. 
In Willwacher's grading convention, the differential on $\mathsf{GC}$ raises the degree by $1$, the cohomological degree of (the dual basis element corresponding to) $[\Gamma,\omega]$ being  $|V(\Gamma)| - (g+1) = |E(\Gamma)| - 2g.$  
The differential on $\mathsf{GC}$ is then given by precomposing with (\ref{eq:1}).  

As explained in (\cite[Proposition 3.4]{Willwacher15}), $\mathsf{GC}$ carries a natural combinatorially defined Lie bracket, making it a differential graded Lie algebra.
The main result of Willwacher's paper gives an isomorphism between the Grothendieck--Teichm\"uller Lie algebra and graph cohomology in degree 0
\begin{equation*}
  H^0(\mathsf{GC}) \cong \mathfrak{grt}_1.
\end{equation*}
A connected genus $g$ graph gives a degree-0 cochain if it has precisely $2g$ edges (and hence $g+1$ vertices).  Any element of $H^0(\mathsf{GC})$ may be evaluated on the cycle $W_g$.  The dual basis element $W_g^\vee$ has cohomological degree 0 in $\mathsf{GC}$, but is likely not a cocycle.
By definition, the Lie algebra $\mathfrak{grt}_1$ consists of elements $\phi$ of the completed free Lie algebra on two elements, satisfying certain explicit equations which we shall not recall (see \cite[\S 6.1]{Willwacher15}).
An important consequence of this isomorphism is the following.
\begin{theorem}[\cite{Willwacher15}]\label{thm:wheel-nonzero}
  For any odd $g \geq 3$ there exists an element $\sigma_g \in H^0(\mathsf{GC})$ with $\langle\sigma_g,W_g \rangle \neq 0$.  Hence $[W_g] \neq 0 \in H_0(G^{(g)})$, i.e., the wheel cycle $W_g$ is not a boundary.
\end{theorem}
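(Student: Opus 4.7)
The plan is to derive the theorem by combining two external results recalled just before its statement: Willwacher's isomorphism $H^0(\mathsf{GC}) \cong \mathfrak{grt}_1$ from \cite{Willwacher15}, and Brown's theorem from \cite{Brown12} producing a free Lie subalgebra of $\mathfrak{grt}_1$ with one generator $\sigma_g$ in each odd degree $g \geq 3$. Transporting the Brown elements through Willwacher's isomorphism produces classes in $H^0(\mathsf{GC})$, which I will also denote $\sigma_g$; each is represented by a cocycle in the cochain complex $\mathsf{GC}$.

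The substantive step is to verify $\langle \sigma_g, W_g \rangle \neq 0$. This is not formal and requires tracking Brown's elements through Willwacher's isomorphism. Willwacher builds the map $\mathfrak{grt}_1 \to H^0(\mathsf{GC})$ via operadic deformation theory: each element of $\mathfrak{grt}_1$ gives a deformation of the Kontsevich formality morphism, which translates to a cocycle in the graph complex whose coefficient on a given graph $\Gamma$ is computed by a pairing of Lie words. Under this dictionary, the wheel graph $W_g$ is precisely the graph that detects the "depth one" piece of a Lie word of degree $g$, and Brown's $\sigma_g$ is constructed so that its projection onto depth one is nonzero. One then reads off that $\langle \sigma_g, W_g\rangle$ is a nonzero multiple of this depth-one coefficient.

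The second assertion, $[W_g] \neq 0 \in H_0(G^{(g)})$, is immediate from the first by a formal argument: if $W_g = \partial c$ for some $c \in G^{(g)}$, then since $\sigma_g$ is a cocycle in the dual complex $\mathsf{GC}$, we would have
\begin{equation*}
  \langle \sigma_g, W_g \rangle = \langle \sigma_g, \partial c \rangle = \langle \delta \sigma_g, c \rangle = 0,
\end{equation*}
contradicting $\langle \sigma_g, W_g\rangle \neq 0$.

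The hard part is the middle step: explicitly identifying the image of Brown's generators under Willwacher's isomorphism and reading off their wheel-coefficients. Rather than reprove this, I will take it as input from \cite{Willwacher15} (and the underlying compatibility between Willwacher's map and the standard "Lie-algebraic" detection of the $\sigma_g$ via their depth-one truncation). The remainder of the argument is just bookkeeping with the pairing between $\mathsf{GC}$ and $G^{(g)}$ recalled in Section~\ref{sec:konts-graph-compl}.
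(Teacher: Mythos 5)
Your proposal is correct and, at its core, takes the same route as the paper: the pairing $\langle\sigma_g,W_g\rangle\neq 0$ is deferred to Willwacher's \cite[Section~9]{Willwacher15}, and the second assertion $[W_g]\neq 0$ then follows by the standard duality argument ($W_g=\partial c$ would force $\langle\sigma_g,W_g\rangle=\langle\delta\sigma_g,c\rangle=0$), which is exactly the paper's implicit ``hence.'' One small point worth flagging: your detour through Brown's theorem \cite{Brown12} is not needed for this particular statement. Brown's result is about the $\sigma_g$ generating a \emph{free} Lie subalgebra; the paper uses it only in the subsequent asymptotic count (Theorem \ref{thm:non-vanishing-graph-homology}). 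For the present theorem, the relevant input is just that nonzero elements $\sigma_g\in\mathfrak{grt}_1$ in odd degree $g\geq 3$ exist (which goes back to Drinfeld, via a choice of associator) and that Willwacher's translation of them into $\mathsf{GC}$ yields cocycles with nonzero $W_g^\vee$-coefficient. Your heuristic explanation in terms of the ``depth one'' piece of the Lie word is a reasonable gloss on Willwacher's computation, but since you explicitly take that step as external input, the argument is essentially identical to the paper's.
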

\begin{proof}[Proof sketch]
  Starting from a suitable Drinfeld associator, Willwacher in \cite[\S9]{Willwacher15} translates the corresponding element $\sigma_g \in \mathfrak{grt}_1$ into $\mathsf{GC}$ and proves that the resulting cocycle in $\mathsf{GC}$ has non-zero coefficient of $W_g^\vee$.
\end{proof}
 
\begin{theorem}\label{thm:nonvanishing-graph-homology}
The group $H_0(G^{(g)})$ is nonzero for $g =3$, $g = 5$, and $g \geq 7$.  
Moreover, $\dim H_0(G^{(g)})$ grows at least exponentially.  More precisely, $$\dim H_0(G^{(g)}) > \beta^g + \text{constant}$$ for any $\beta< \beta_0$, where $\beta_0 \approx 1.3247\ldots$ is the real root of $t^3-t-1=0$ . 
\end{theorem}

\begin{proof}
Let $V$ denote the graded $\Q$-vector space generated by symbols $\sigma_{2i+1}$ in degree $2i+1$ for each $i\ge 1$, and let $\mathrm{Lie}(V)$ be the free Lie algebra on $V$.  
As explained in \cite{Willwacher15}, the result of \cite{Brown12} implies that the classes $\sigma_{2i+1} \in \mathfrak{grt}_1$  
together with the Lie algebra structure on $\mathfrak{grt}_1$ gives rise to an injection 
\begin{equation}\label{eq:lie-inj}
\mathrm{Lie}(V)\hookrightarrow \mathfrak{grt}_1 \cong H^0(\mathsf{GC})\cong {\bigg( \bigoplus_{g\ge2} H_0 (G^{(g)}) \bigg)}^{\!\!\raisebox{-2pt}{$\scriptstyle\vee$}}.
\end{equation} Thus $\sigma_3,\sigma_5\ne 0\in \mathfrak{grt}_1$, and since any even number $g \geq 8$ may be written as $g = 3 + (g-3)$ with $g-3 > 3$, we also have $[\sigma_3,\sigma_{g-3}] \neq 0 \in \mathfrak{grt}_1$, which gives rise to a non-zero homomorphism $H_0(G^{(g)}) \to \Q$. More specifically, for $g\ge 8$ even, $H_0(G^{(g)})$ contains a non-zero homology class whose Lie cobracket contains a term $W_3 \otimes W_{g-3}$.

For the asymptotic statement, we shall compute the Poincar\'e series (i.e., the generating function for dimension of graded pieces) of $\mathrm{Lie}(V)$, using a variant of Witt's formula for the dimension of the graded pieces of a free, finitely generated Lie algebra that is generated in degree 1, and then appeal to~\eqref{eq:lie-inj}.  The Poincar\'e series of $V$ is $f(t) = t^3/(1-t^2)$. The universal enveloping algebra $U(\mathrm{Lie}(V))$ is isomorphic to the free associative algebra $\bigoplus_{n\ge 0}V^{\otimes n}$, so has Poincar\'e series $1/(1-f)$. Now let $S(\mathrm{Lie}(V))$ denote the free commutative $\Q$-algebra on $\mathrm{Lie}(V)$; it has Poincar\'e series $$\prod_{d\ge 0} \frac{1}{(1-t^d)^{A_d}},$$
where $A_d:=\dim \mathrm{Lie}(V)_d$ are the sought-after coefficients of the Poincar\'e series for $\mathrm{Lie}(V)$.  The Poincar\'e-Birkhoff-Witt theorem implies that $U(\mathrm{Lie}(V)) \cong S(\mathrm{Lie}(V))$ as graded vector spaces, so $1/(1-f) = \prod_{n\ge 0} {1}/{(1-t^n)^{A_n}}$. Applying $t \tfrac{d}{dt} \operatorname{log}(\cdot)$ to both sides yields
\begin{equation}\label{eq:after-log}
p(t):=\frac{t^3(3-t^2)}{(1-t^2)(1-t^2-t^3)} = \sum_{d\ge 0} dA_d \,\frac{t^d}{1-t^d}.
\end{equation}
Write $p(t) = \sum_{n\ge0} a_n t^n$. To analyze the $a_n$, notice that $p(t)$ has five simple poles, at the roots of $(1-t^2)(1-t^2-t^3) = 0$.  There is a unique root $\alpha\approx 0.75488\ldots$ having smallest absolute value, and $\operatorname{Res}_\alpha p(t) = -\alpha$ (the exact value of the residue is not important).  Therefore $p(t) = -\alpha/(t-\alpha) + \sum_{n\ge0} b_n t^n = \sum_{n\ge 0} (\tfrac{1}{\alpha^n} + b_n)t^n$, where  $\sum_{n\ge0} b_n t^n$ converges on a disc centered at $0$ of radius $>\alpha$. Therefore $b_n\alpha^n\to 0$ and $a_n\alpha^n = (\tfrac{1}{\alpha^n} + b_n)\alpha^n \to 1$. Setting $\beta_0 = 1/\alpha\approx 1.32472\ldots$, then $a_n\to \beta_0^{n}$.  

Now equating coefficients in~\eqref{eq:after-log} yields $a_n = \sum_{d|n} dA_d$, so $$A_n = \frac{1}{n} \sum_{d|n} \mu\!\left(\frac{n}{d}\right) a_d$$ by M\"obius inversion. Since $a_n$ grows exponentially, the summand when $d=n$, namely $a_n/n$, eventually dominates the other terms in the sum, and $A_n$ grows faster than $\beta^n$ for any $\beta<\beta_0$.
\end{proof}

\begin{remark}
  While the exponential growth rate in Theorem~\ref{thm:nonvanishing-graph-homology} relies crucially on the main results of \cite{Willwacher15} and \cite{Brown12}, there are easier and more direct proofs of Theorem~\ref{thm:wheel-nonzero}, constructing an element of $H^0(\mathsf{GC})$ detecting $[W_g]$ using configuration space integrals as in \cite[\S6]{RossiWillwacher14}.  This nonvanishing of $[W_g]$ for odd $g \geq 3$ is sufficient for  disproving \cite[Conjecture~7C]{Kontsevich93} and \cite[Conjecture~9]{ChurchFarbPutman14}.
\end{remark}

\section{Symmetric $\Delta$-complexes}
\label{sec:symm-semis-objects}
\label{sec:cellular}

\begin{definition}
For $p\ge -1$ an integer, we set $$[p] = \{0,\ldots,p\}.$$ \end{definition}
\noindent This notation includes $[-1] = \emptyset$ by convention.

As usual we write $\Delta^p \subset \R^{p+1}$ for the standard simplex, i.e., the convex hull of the standard basis vectors $e_0, \dots, e_p$; its points are $t = (t_0, \dots, t_p) = \sum t_i e_i$ with $t_i \geq 0$ and $\sum t_i = 1$.
Associating the standard simplex
$\Delta^p$ to the number $p$ may be promoted to a functor from finite sets to
topological spaces; for a finite set $S$ define
$\Delta^S = \{a\colon S \to [0,\infty) \mid \sum a(s) = 1\}$ in the
Euclidean topology and for any map of finite sets $\theta\colon S \to T$,
define $\theta_*\colon \Delta^S \to \Delta^T$ by
\begin{equation*}
  (\theta_* a)(t) = \sum_{\theta(s) = t} a(s).
\end{equation*}
The usual $p$-simplex is recovered as $\Delta^p = \Delta^{[p]}$ with
$[p] = \{0, \dots, p\}$.

\subsection{Recollections on $\Delta$-complexes}
\label{sec:recoll-delta-compl}

Let us write $e_i \in \Delta^p$ for the $i$th vertex, $0 \leq i \leq p$.  We order the set of vertices in $\Delta^p$ as $e_0 < \dots < e_p$. 
Let $\Delta_\inj$ be the category with one object
$[p] = \{0, \dots, p\}$ for each integer $p \geq 0$, in which the
morphisms $[p] \to [q]$ are the order preserving injective maps.  We
shall take the following as the official definition of a
$\Delta$-complex (sometimes known as ``semi-simplicial set'' in the more recent literature). \begin{definition}
  A $\Delta$-complex
   is a functor $X\colon \Delta_\inj^\mathrm{op} \to \Sets$.
\end{definition}

The \emph{geometric realization} of a $\Delta$-complex $X$ is
\begin{equation}\label{eq:realization}
  |X| = \Big(\coprod_{p = 0}^\infty X([p]) \times \Delta^p\Big) \big / \sim,
\end{equation}
where $\sim$ is the equivalence relation generated by $(x,\theta_* a) \sim (\theta^* x, a)$ for $x \in X([q])$, $\theta\colon [p] \to [q]$ in $\Delta_\inj$, and $a \in \Delta^p$.  Each element $x \in X([p])$ determines a map of topological spaces $x\colon \Delta^p \to |X|$, and the functor $X\colon \Delta_\inj^\mathrm{op} \to \Sets$ may be recovered from the topological space $|X|$ together with this set of maps from simplices.  

As is customary, we shall usually write $X_p = X([p])$.  
 We also write $\delta^i\colon [p-1] \to [p]$ for the unique order preserving injective map whose image does not contain $i$, and $d_i\colon X_p \to X_{p-1}$ for the induced map.
There is also a category of \emph{augmented} $\Delta$-complexes, which are functors $(\Delta_\inj \cup\{[-1]\})^\mathrm{op} \to \Sets$, where $[-1] = \emptyset$ is added to $\Delta_\inj$ as initial object.  The geometric realization $|X|$ then comes with a continuous map $\epsilon\colon |X| \to X_{-1}$.  

\subsection{Symmetric $\Delta$-complexes}
\label{sec:symm-delta-compl}
\label{sec:generalized}
\label{sec:symmetric}

The notion of $\Delta$-complexes may be generalized to
allow, in their geometric realizations, gluing along maps $\Delta^q \to \Delta^p$ that do not preserve the ordering of the vertices. This includes gluing along maps from $\Delta^p$ to itself induced by permuting the vertices.  
\begin{definition}
  Let $I$ be the category with the same objects as
  $\Delta_\inj \cup \{[-1]\}$, but whose morphisms $[p] \to [q]$ are
  all injective maps $\{0, \dots, p\} \to \{0, \dots, q\}$.  A
  \emph{symmetric $\Delta$-complex} (or symmetric semi-simplicial set) is a functor
  $X\colon I^\mathrm{op} \to \Sets$.
\end{definition}
Such a functor is given by a set $X_p$ for each
$p \geq -1$, actions of the symmetric group $S_{p+1}$ on $X_p$ for all $p$, and face maps $d_i\colon X_p \to X_{p-1}$ for
$0 \leq i \leq p$.  The face maps satisfy the usual simplicial
identities as well as a compatibility with the symmetric group action.
We have chosen the name in analogy with the
``symmetric simplicial sets'' in the literature (e.g., \cite{Grandis01}), which is a similar notion also including degeneracy maps.  
The \emph{geometric realization} of $X$ is given by
formula~(\ref{eq:realization}), where the equivalence relation now
uses all morphisms $\theta$ in $I$. 

Symmetric $\Delta$-complexes also come with a set $X_{-1} = X(\emptyset)$ and there is an augmentation map $|X| \to X_{-1}$.  (So strictly speaking ``augmented symmetric $\Delta$-complexes'' would be a more accurate name, but we use ``symmetric $\Delta$-complexes'' for brevity.) 

The \emph{standard orthant} $\R_{\geq 0}^{[p]} = \prod_{i=0}^p [0,\infty)$ is functorial in $[p] \in I$ by letting $\theta \in I([p],[q])$ act as $\theta_*(t_0, \dots, t_p) = \sum t_i e_{\theta(i)}$, where $e_i \in \R^{[p]}$ denotes the $i$th standard basis vector.  Replacing $\Delta^p$ by the standard orthant in the definition of $|X|$ we arrive at the \emph{cone over} $X$:
\begin{equation}\label{eq:realization-cone}
  CX = \Big(\coprod_{p = {-1}}^\infty X_p \times \R_{\geq 0}^{[p]}\Big) \big / \sim,
\end{equation}
where  $\sim$ is the equivalence relation generated by $(x,\theta_* a) \sim (\theta^* x, a)$ for $p,q \geq -1$, $x \in X_q$, $a \in \R_{\geq 0}^{[p]}$, and $\theta \in I([p],[q])$.  The maps $\ell\colon \R_{\geq 0}^{[p]} \to \R$ given by $(t_0, \dots, t_p) \mapsto \sum t_i$ are compatible with this gluing, and induce a canonical map
\begin{equation*}
  \ell_X\colon CX \to \R_{\geq 0},
\end{equation*}
so that $X \mapsto CX$ naturally takes values in the category of spaces over $\R_{\geq 0}$.  We have canonical homeomorphisms $\ell_X^{-1}(1) = |X|$ and $\ell_X^{-1}(0) = X_{-1}$, and from $\ell_X^{-1}([0,1])$ to the mapping cone of the augmentation $|X| \to X_{-1}$.  The inclusions $X_{-1} \subset \ell_X^{-1}([0,1]) \subset CX$ are both deformation retractions.  It follows that the quotient $CX/|X|$ deformation retracts to the mapping cone of $|X| \to X_{-1}$.  When $X_{-1} = \{\ast\}$ the space $CX$ deformation retracts to the cone over $|X|$ (hence the name) and $CX/|X|$ deformation retracts to the unreduced suspension $S|X|$.

\begin{example}
  The representable functor $I(-,[p])\colon I^\mathrm{op} \to \Sets$ has geometric realization $|I(-,[p])| \cong \Delta^p$.
\end{example}

\begin{example}
  An (abstract) simplicial complex $K$ with vertex set $V$ determines
  a symmetric $\Delta$-complex $X_K\colon I^\mathrm{op} \to \Sets$, sending $[p]$ to the set of injective maps $f\colon [p] \to V$ whose image spans a simplex of $K$.  The realizations of $K$ as a simplicial complex and $X_K$ as a symmetric $\Delta$-complex are canonically homeomorphic.
\end{example}

\begin{example}\label{ex:halfinterval}
  A typical example of a symmetric $\Delta$-complex in which the symmetric groups do not act freely is the {\em half interval} given as a coequalizer of the two distinct morphisms
  $$X = \mathrm{colim} (I(-,[1]) \double I(-,[1])),$$ where $X_{-1}$, $X_0$, and $X_1$ are one-element
  sets and $X_p = \emptyset$ for $p \geq 2$.  The unique element in
  $X_1$ gives a map $\Delta^1 \to |X|$ which is not injective; it
  identifies $|X|$ with the topological quotient of $\Delta^1$ by the
  action of $\Z/2\Z$ that reverses the orientation of the interval.
\end{example}

\subsection{Cellular chains}
\label{sec:cellular-chains}

The rational homology of $|X|$ and the relative homology of $(CX,|X|)$ may be calculated by cellular chain complexes, functorially associated to $X: I^\mathrm{op} \to \Sets$.

\begin{definition}\label{def:cellular-chains-set}
  Let $R$ be a commutative ring and write $R X_p$ for the free $R$-module spanned by the set $X_p$.  The group of cellular $p$-chains $C_p(X;R)$ is
  \begin{equation*}
    C_p(X;R) = (R^\mathrm{sgn} \otimes_{R S_{p+1}}  R X_p)
  \end{equation*}
  where $R^\mathrm{sgn}$ denotes the action of $S_{p+1}$ on $R$ via the sign.
\end{definition}
The boundary map $\partial: C_p(X) \to C_{p-1}(X)$ is the unique map that makes the following diagram commute:
\begin{equation*}
  \xymatrix{
    {R X_p} \ar[rr]^-{\sum (-1)^i (d_i)_*} \ar@{->>}[d] && {R X_{p-1}}\ar@{->>}[d]^\pi\\
    {C_p(X;R)} \ar[rr]^-{\partial} &&  C_{p-1}(X;R).
  }
\end{equation*}
It is easily verified that such a homomorphism $\partial$ exists, and satisfies $\partial^2 = 0$.

Similarly, we define cochains 
\begin{equation*}
  C^p(X;R) = \Hom_\Z(C_p(X;\Z),R) = \Hom_{R S_{p+1}}(R X_p,R^\mathrm{sgn}),
\end{equation*}
with coboundary $\delta = (-1)^{p+1} \partial^\vee\colon C^p(X;R) \to C^{p+1}(X;R)$.
In other words, $C^p(X;R)$ is the $R$-module consisting
of all set maps $\phi\colon X_p \to R$ which satisfy
$\phi (\sigma x) = \mathrm{sgn}(\sigma) \phi(x)$ for all
$x \in X_p$ and all $\sigma \in S_{p+1}$.

\newcommand{\sing}{\mathrm{sing}}

To compare this cellular chain complex to singular homology, write $\iota_p \in C_p^\sing(\Delta^p)$ for the chain given by the identity map of $\Delta^p$, and $\iota_p'\in C_p^\sing(\Delta^p)$ for its barycentric subdivision (in the sense of e.g.\ \cite[p.\ 122]{Hatcher02} or \cite[\S IV.17]{Bredon13}).  Any element $x \in X_p$ gives a map $x\colon \Delta^p \to |X|$, and we define a natural transformation
\begin{equation}\label{eq:12}
  \begin{aligned}
  C_p(X;\Z) &\to C_p^\mathrm{sing}(|X|;\Z)\\
   x &\mapsto x_*(\iota_p'),
\end{aligned}
\end{equation}
This is well defined because $\iota'_p$ satisfies $\sigma_*(\iota_p') = \mathrm{sgn}(\sigma) \iota_p'$ for all $\sigma \in S_{p+1}$ and a defines a chain homomorphism $C_*(X;\Z)_{\geq 0} \to C_*^\mathrm{sing}(|X|;\Z)$ where we write $C_*(X;\Z)_{\geq 0}$ for the quotient by $\Z X_{-1}$.  By applying $R \otimes_\Z(-)$ or $\Hom_\Z(-,R)$ to~(\ref{eq:12}) we obtain natural natural transformations of chains and cochains with coefficients in $R$.

\begin{proposition}\label{prop:cellularhomology}
  The homomorphisms
  \begin{align*}
    H_p(C_*(X;R)_{\geq 0},\partial) &\to H_p^\mathrm{sing}(|X|;R)\\
    H^p(C^*(X;R)_{\geq 0},\delta) &\leftarrow H^p_\mathrm{sing}(|X|;R)
  \end{align*}
  induced by the (co)chain homomorphisms defined above are isomorphisms, provided
  the orders of stabilizers of $S_{p+1}$ on $X_p$ are invertible in $R$ (e.g., if the actions are all free or if $\Q \subset R$).  Under this assumption, there are induced isomorphisms
  \begin{align*}
    H_p(C_*(X;R),\partial) & \cong H^\mathrm{sing}_{p+1}(CX,|X|;R)\\
    H^p(C_*(X;R),\delta) & \cong H_\mathrm{sing}^{p+1}(CX,|X|;R).
  \end{align*}
  When $X_{-1}$ is a singleton, the right hand sides here are reduced homology and cohomology $\widetilde H_p^\sing(|X|;R)$ and $\widetilde H^p_\sing(|X|;R)$.
\end{proposition}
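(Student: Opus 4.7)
The plan is to filter $|X|$ by its skeleta and identify the resulting filtration spectral sequence with the cellular chain complex.  Write $|X|^{(p)}$ for the image of $\coprod_{q \leq p} X_q \times \Delta^q$ under the quotient map defining $|X|$, so that $\emptyset = |X|^{(-2)} \subseteq |X|^{(-1)} \subseteq |X|^{(0)} \subseteq \dots$ is an exhaustive filtration.  First, I would verify that the rule $x \mapsto x_*\iota'_p$ of~\eqref{eq:12} descends to the claimed chain map $C_*(X;\Z)_{\geq 0} \to C_*^{\mathrm{sing}}(|X|;\Z)$.  This follows from the stated properties of the barycentric subdivision $\iota'_p$: the equivariance $\sigma_* \iota'_p = \mathrm{sgn}(\sigma) \iota'_p$ is exactly the relation needed to factor through the sign coinvariants defining $C_p(X;\Z)$, while the boundary formula $\partial \iota'_p = \sum_i (-1)^i (d_i)_* \iota'_{p-1}$ delivers the intertwining of differentials.

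The main step is to show that $H_q(|X|^{(p)}, |X|^{(p-1)}; R)$ vanishes for $q \neq p$ and is canonically isomorphic to $C_p(X;R)$ for $q = p$, with differentials matched via the connecting homomorphisms.  For each orbit $O = S_{p+1} \cdot x$ of $S_{p+1}$ on $X_p$, with stabilizer $G_x \subseteq S_{p+1}$, the map $x\colon \Delta^p \to |X|^{(p)}$ factors through $\Delta^p/G_x$ and is injective on the open simplex, with boundary sent into $|X|^{(p-1)}$.  Excision and additivity over orbits reduce the computation to evaluating $\bigoplus_O H_*(\Delta^p/G_x, \partial(\Delta^p/G_x); R)$.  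Because $|G_x|$ is invertible in $R$, the transfer homomorphism identifies this local contribution with the $G_x$-coinvariants of $H_*(\Delta^p, \partial \Delta^p; R)$, which is concentrated in degree $p$ and is there $R$ with $G_x$-action by the sign character.  These coinvariants equal $R$ when $G_x \subseteq A_{p+1}$ and vanish otherwise (if $G_x$ contains an odd permutation then $2$ divides $|G_x|$ and is hence invertible, killing the sign coinvariants), matching exactly the contribution of $O$ to $C_p(X;R) = R^{\mathrm{sgn}} \otimes_{R S_{p+1}} R X_p$.  Compatibility of cellular and topological boundaries is built into the naturality of $\iota'_p$ together with the formula $\partial \iota'_p = \sum_i (-1)^i (d_i)_* \iota'_{p-1}$.

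Assembling the long exact sequences of the filtration pairs then produces the isomorphism $H_p(C_*(X;R)_{\geq 0}, \partial) \cong H_p^{\mathrm{sing}}(|X|;R)$; the cohomology statement follows by an entirely parallel argument replacing coinvariants by invariants throughout.  For the cone version, I would run the analogous filtration on $CX$ using the orthants $\R_{\geq 0}^{[p]}$ in place of $\Delta^p$, or alternatively combine the already-established statement for $|X|$ with the long exact sequence of the pair $(CX, |X|)$ together with the observation that $CX$ deformation retracts onto $X_{-1}$, so that $H_*(CX;R) \cong R X_{-1}$ concentrated in degree $0$.  Comparing that long exact sequence with the short exact sequence of chain complexes
\begin{equation*}
0 \to R X_{-1}[-1] \to C_*(X;R) \to C_*(X;R)_{\geq 0} \to 0
\end{equation*}
yields the claimed cone isomorphism, which specializes to reduced (co)homology when $X_{-1}$ is a singleton.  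The main obstacle I anticipate is the local transfer computation: the hypothesis on invertibility of stabilizer orders is used in an essential way only there, and one must carefully identify the sign-twisted coinvariants $R^{\mathrm{sgn}}_{G_x}$ arising both in the homology of $(\Delta^p/G_x, \partial)$ and in the definition of $C_p(X;R)$.
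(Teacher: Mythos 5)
Your proposal is correct and follows essentially the same strategy as the paper: filter $|X|$ by skeleta, identify $H_*(|X|^{(p)},|X|^{(p-1)};R)$ with $C_p(X;R)$ using the invertibility of stabilizer orders (you spell out the orbit-by-orbit transfer argument that the paper compresses into a single assertion), and assemble via long exact sequences à la Hatcher's cellular-vs-singular comparison, with the augmented/cone case handled by appending $RX_{-1}$ and using the deformation retraction $X_{-1}\subset CX$. The only difference is expository granularity, not mathematical substance.
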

\begin{proof}
  The symmetric $\Delta$-complex $X$ is filtered by subcomplexes $X^{(p)} \subset X$ defined by setting $X^{(p)}_q = X_q$ for $q \leq p$ and $X^{(p)}_q = \emptyset$ for $q > p$.  The quotient space $|X^{(p)}|/|X^{(p-1)}|$ may be identified with the orbit space
  \begin{equation*}
    |X^{(p)}|/|X^{(p-1)}| \cong \bigg(\frac{X_p \times \Delta^p}{X_p \times \partial \Delta^p}\bigg) / S_{p+1},
  \end{equation*}
  and the induced map
  \begin{equation*}
    R^\mathrm{sgn} \otimes_{RS_{p+1}} RX_p \to H^\sing_p((X_p\times \Delta^p)/S_{p+1},(X_p \times \partial \Delta^p)/S_{p+1};R)
  \end{equation*}
  is an isomorphism under the assumption.  Now proceed by induction on skeleta, using the five-lemma and the long exact sequences associated to the pairs $(X^{(p)},X^{(p-1)})$, exactly as in the proof of \cite[Theorem 2.2.27]{Hatcher02}.  

  For the augmented statement use $|X| \to X_{-1}$ to add one more term to the singular chain complex
  \begin{equation*}
    \dots \to C^\sing_1(|X|;R) \to C^\sing_0(|X|;R) \to RX_{-1} \to 0
  \end{equation*}
  This complex calculates $H^\sing_{*+1}(CX,|X|;R)$, 
   since the inclusion $X_{-1} \subset CX$ is a deformation retraction.  The claim is now easily deduced from the absolute case, and cohomology is similar.
\end{proof}
Henceforth we shall use the same notation $H_*(-;R)$ and $H^*(-;R)$ for the singular and cellular theories.

\begin{definition}
  For a symmetric $\Delta$-complex $X$ define
  \begin{align*}
    H_p(X;R) = H_p(C_*(X;R),\partial)\\
    H^p(X;R) = H^p(C^*(X;R),\delta).
  \end{align*}
  When $X_{-1}$ is a singleton these agree with $\widetilde H_p(|X|;R)$ and $\widetilde H^p(|X|;R)$ respectively, provided orders of stabilizers of $S_{p+1}$ on $X_p$ are invertible in $R$.
\end{definition}

In practice the cellular homology may be studied using the following observation.
\begin{lemma}
  Let $X$ be a symmetric $\Delta$-complex, and let $R$ be a commutative ring.  Suppose we are given subsets $T_p \subset X_p$ for some $p$, and suppose that either
  \begin{itemize}
  \item the induced map $S_{p+1} \times T_p \to X_p$ is a bijection for all $p$, or
  \item $\Q \subset R$, the composition $T_p \to X_p \to X_p/S_{p+1}$ is injective, the stabilizer of any $x \in T_p$ is contained in $A_{p+1}$, and any point $x \in X_p$ whose stabilizer is contained in $A_{p+1}$ is in the $S_{p+1}$-orbit of some $x' \in T_p$.
  \end{itemize}
  Then the map
  \begin{equation*}
    RT_p \to C_p(X;R)
  \end{equation*}
  is an isomorphism of $R$-modules.\qed
\end{lemma}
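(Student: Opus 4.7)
The plan is to unpack the definition of $C_p(X;R)$ orbit-by-orbit and reduce to a coinvariants computation over each stabilizer. Recall that $C_p(X;R) = R^{\mathrm{sgn}} \otimes_{RS_{p+1}} RX_p$. Decomposing $X_p$ into its $S_{p+1}$-orbits and choosing a representative $x$ in each orbit with stabilizer $G_x \leq S_{p+1}$, we obtain an isomorphism of left $RS_{p+1}$-modules $RX_p \cong \bigoplus_{[x]} RS_{p+1} \otimes_{RG_x} R$. Tensoring with $R^{\mathrm{sgn}}$ yields
\begin{equation*}
  C_p(X;R) \;\cong\; \bigoplus_{[x]} R^{\mathrm{sgn}} \otimes_{RG_x} R \;=\; \bigoplus_{[x]} \bigl(R^{\mathrm{sgn}}\bigr)_{G_x},
\end{equation*}
where $(R^{\mathrm{sgn}})_{G_x}$ denotes the $G_x$-coinvariants of $R$ with $G_x$ acting by the restriction of the sign character.

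For the first bullet, every $G_x$ is trivial, so each summand is simply $R$, and the representatives are precisely the elements of $T_p$, giving the isomorphism $RT_p \xrightarrow{\cong} C_p(X;R)$ sending $x \mapsto 1 \otimes x$.

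For the second bullet, I would show separately that $(R^{\mathrm{sgn}})_{G_x}$ equals $R$ when $G_x \subset A_{p+1}$ and equals $0$ otherwise. In the first case the sign character is trivial on $G_x$, so the coinvariants are all of $R$. In the second case, pick any odd $\sigma \in G_x$; then the defining relation of the coinvariants forces $-r - r = 0$, i.e.\ $2r = 0$, and since $\Q \subset R$ this forces $r = 0$. By the hypotheses, the $S_{p+1}$-orbits whose stabilizers lie in $A_{p+1}$ are in bijection with $T_p$ via $T_p \hookrightarrow X_p \to X_p/S_{p+1}$, so summing over such orbits yields $RT_p$ again, with the same explicit isomorphism $x \mapsto 1 \otimes x$.

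There is no real obstacle here; the argument is a direct computation with induced representations and coinvariants. The only subtle point, and the reason the hypothesis $\Q \subset R$ (or more weakly, the invertibility of the stabilizer orders) is needed, is the vanishing of sign coinvariants when the stabilizer contains an odd permutation, which requires inverting $2$.
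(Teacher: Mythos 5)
Your argument is correct. The paper leaves this lemma unproved (the \verb|\qed| attached to the statement signals the proof is omitted as routine), and the computation you give --- decompose $RX_p$ into induced modules over orbit stabilizers, then observe that $R^{\mathrm{sgn}}\otimes_{RG_x}R$ is $R$ when $G_x\subset A_{p+1}$ and vanishes otherwise once $2$ is invertible --- is exactly the natural way to fill it in. Your closing remark is also accurate: the hypothesis $\Q\subset R$ in the second bullet is overkill for this particular lemma, since only $2^{-1}\in R$ is used; the stronger hypotheses in the paper's formulation (invertibility of all stabilizer orders, or $\Q\subset R$) are what is needed elsewhere, in Proposition~\ref{prop:cellularhomology}, to compare cellular and singular homology.
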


\noindent Thus $H_*(|X|;\Q)$ may be calculated from a chain complex with one generator for each element in a set of representatives for orbits of elements with alternating stabilizers.

\begin{remark}
  A similar construction was used in \cite{HatcherVogtmann98} to find
  a small model for the rational chains of a certain space, except
  that instead of our $\Delta^{p}/H$ for $H < S_{p+1}$ their basic
  building blocks are of the form $[0,1]^n/H$ for certain subgroups
  $H$ of the symmetry group of a cube.
  There is also a general construction used by \cite{Berkovich99}, 
   in which simplices are replaced by polysimplicial sets.
\end{remark}

\subsection{Colimit presentations and subdivision}
\label{sec:subdivision}

Any symmetric $\Delta$-complex is isomorphic to the colimit of a diagram consisting of representable functors $I(-,[p]) \colon I^\mathrm{op} \to \Sets$ and morphisms between them.  This is a special case of a general fact about presheaves of sets on a small category, cf.\ \cite[\S III.7]{MacLane98}, but let us recall how it works in our case.

Given $X \colon I^\mathrm{op} \to \Sets$, define a category $\JX$ whose objects are pairs $([p],x)$ with $x \in X([p])$ and whose morphisms $([p],x) \to ([p'],x')$ are the $\theta \in I([p'],[p])$ with $X(\theta)(x) = x'$.  For later use we point out that $\theta$ is an isomorphism in $\JX$ if and only if $p = p'$.  
Note that there is a canonical morphism of symmetric $\Delta$-complexes
\begin{equation}
  \label{eq:7}
  \colim_{([p],x) \in \JX} I(-,[p]) \to X,
\end{equation}
assembled from the morphisms $x\colon I(-,[p]) \to X$.  Using that colimits in the category of symmetric $\Delta$-complexes are calculated object-wise, it is easy to verify that this morphism is in fact always an isomorphism.

We sometimes use this to reduce a statement about all symmetric $\Delta$-complexes to a statement about representable ones, when the statement is preserved by taking colimits.

\begin{lemma}\label{lem:geometric-realization-colimit}
  The functor $X \mapsto |X|$ from symmetric $\Delta$-complexes to topological spaces preserves all small colimits.
\end{lemma}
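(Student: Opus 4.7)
The plan is to exhibit the geometric realization $|{-}|$ as a left adjoint, from which preservation of small colimits is a formal consequence.

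First, I would construct a right adjoint $\mathrm{Sing}\colon \mathsf{Top} \to [I^{\mathrm{op}}, \mathsf{Sets}]$ by setting $\mathrm{Sing}(T)_p = \mathsf{Top}(\Delta^p, T)$ for $p \geq 0$ and $\mathrm{Sing}(T)_{-1} = \mathsf{Top}(\Delta^{\emptyset}, T)$, which is a singleton since $\Delta^{\emptyset} = \emptyset$. Functoriality in $I^{\mathrm{op}}$ is supplied by precomposition with the continuous maps $\theta_*\colon \Delta^p \to \Delta^q$ attached to $\theta \in I([p],[q])$ in the opening paragraphs of Section~\ref{sec:cellular}, and functoriality in $T$ is given by postcomposition.

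Next, I would produce a natural bijection
\[
\mathsf{Top}\bigl(|X|,\,T\bigr) \;\cong\; [I^{\mathrm{op}}, \mathsf{Sets}]\bigl(X,\,\mathrm{Sing}(T)\bigr).
\]
Using that each $X_p$ carries the discrete topology, the universal property of the quotient defining~\eqref{eq:realization} identifies a continuous map $|X| \to T$ with a family of continuous maps $f_x\colon \Delta^p \to T$, indexed by $p \geq 0$ and $x \in X_p$, subject to the compatibility $f_{\theta^* x} = f_x \circ \theta_*$ for every $\theta \in I([q],[p])$ and every $x \in X_p$. Unwinding definitions, this compatibility is precisely the naturality requirement for the assignment $x \mapsto f_x$ to constitute a transformation $X \to \mathrm{Sing}(T)$; the component at $[-1]$ is uniquely determined since $\mathrm{Sing}(T)_{-1}$ is a singleton, and the remaining naturality squares involving $[-1]$ commute automatically for the same reason.

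The conclusion is then immediate, since any left adjoint preserves all small colimits. I do not anticipate any substantive obstacle; the only care required is the bookkeeping around the augmentation object $[-1]$, which is resolved by observing that $\Delta^{\emptyset} = \emptyset$ contributes nothing to~\eqref{eq:realization} while $\mathrm{Sing}(T)_{-1}$ is terminal, so that both sides of the adjunction effectively depend only on the restriction of $X$ to the full subcategory on $[p]$ with $p \geq 0$.
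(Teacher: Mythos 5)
Your proof is correct and takes essentially the same approach as the paper: exhibit $\mathrm{Sing}(T)_p = \mathsf{Top}(\Delta^p, T)$ as a right adjoint to geometric realization and invoke the standard fact that left adjoints preserve small colimits. The only difference is that you spell out the bookkeeping at $[-1]$ more explicitly, which the paper leaves implicit.
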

By~\eqref{eq:7}, the geometric realization functor is characterized by Lemma~\ref{lem:geometric-realization-colimit} and the homeomorphisms $|I(-,[p])| = \Delta^p$, natural in $[p] \in I$.  Similarly for $X \mapsto CX$.

\begin{proof}
  For a topological space $Z$ we let $\mathrm{Sing}(Z)$ be the
  symmetric $\Delta$-complex which sends $[p]$ to the set of all
  continuous maps $\Delta^p \to Z$.  It is easily verified that the resulting functor $\mathrm{Sing}$ is right adjoint to the geometric realization functor, which therefore preserves all small colimits (\cite[V.5]{MacLane98}).
\end{proof}

Let us briefly discuss a barycentric subdivision functor, from symmetric $\Delta$-complexes to (augmented) $\Delta$-complexes.
We first define the barycentric subdivision of the symmetric $\Delta$-complex $I(-,[p])$ by sending $[q] \in \Delta_\mathrm{inj} \cup \{[-1]\}$ to the set of all flags of subsets $(\emptyset \subsetneq A_0 \subsetneq \dots \subsetneq A_q \subseteq [p])$.  The subdivision $\mathrm{sd}(X)$ of a general $X\colon I^\mathrm{op} \to \Sets$ is then
defined as the colimit in augmented $\Delta$-complexes
\begin{equation*}
  \mathrm{sd}(X) = \colim_{([p],x) \in \JX} \mathrm{sd}(I(-,[p])).
\end{equation*}
Explicitly, this spells out to the formula
\begin{equation*}
  \mathrm{sd}(X)([q]) = \big(\coprod_p X_p \times \mathrm{sd}(I(-,[p]))_q\big)/\sim,
\end{equation*}
where $\sim$ is the equivalence relation generated by
$(x,\theta^*b)\sim (X(\theta)(x),b)$ whenever $x\in X_p$, $b\in \mathrm{sd}(I(-,[p']))_q,$ and $\theta\in I([p'],[p])$.

Equivalently, $\mathrm{sd}(X)([q])$ may be explicitly described as the set of equivalence classes of conservative functors $\sigma\colon (0 < \dots < q) \to \JX^\mathrm{op}$, up to natural isomorphism of functors.  In any case, let us emphasize that the subdivision of a symmetric $\Delta$-complex is an (augmented) ordinary $\Delta$-complex, not merely a symmetric one.

\begin{lemma}
  The geometric realizations of a symmetric $\Delta$-complex $X$ and the $\Delta$-complex $\mathrm{sd}(X)$ are canonically homeomorphic.
\end{lemma}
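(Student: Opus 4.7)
The plan is to reduce to the case of a representable symmetric $\Delta$-complex by a colimit argument, and then invoke the classical statement that the barycentric subdivision of a simplex has geometric realization homeomorphic to the simplex itself, taking care to verify naturality not only for order-preserving injective maps but for \emph{all} injective maps in $I$.

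First, I would check that both functors $X \mapsto |X|$ and $X \mapsto |\mathrm{sd}(X)|$, regarded as functors from symmetric $\Delta$-complexes to topological spaces, preserve all small colimits. For the first this is Lemma~\ref{lem:geometric-realization-colimit}. For the second, the construction of $\mathrm{sd}$ is already given as a colimit over $J_X$ indexed by representables, and the geometric realization functor on augmented $\Delta$-complexes also has a right adjoint (the analogue of $\mathrm{Sing}$, using continuous maps from $\Delta^q$ parametrized by $\Delta_{\inj}\cup\{[-1]\}$), hence preserves colimits as well. Combined with the canonical presentation $X = \colim_{([p],x)\in J_X}I(-,[p])$ from~(\ref{eq:7}), it therefore suffices to produce a homeomorphism
\begin{equation*}
h_{[p]} \colon |\mathrm{sd}(I(-,[p]))| \xrightarrow{\;\cong\;} |I(-,[p])| = \Delta^p
\end{equation*}
that is natural in $[p]\in I$.

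Second, I would define $h_{[p]}$ explicitly by barycentric coordinates. A $q$-simplex of $\mathrm{sd}(I(-,[p]))$ is a flag $F=(\emptyset\subsetneq A_0\subsetneq\dots\subsetneq A_q\subseteq[p])$, and I send the affine simplex $\Delta^q$ labelled by $F$ into $\Delta^p$ by the unique affine map taking $e_i$ to the barycenter $b_{A_i}=|A_i|^{-1}\sum_{j\in A_i}e_j$. That these assemble into a well-defined continuous map and into a homeomorphism onto $\Delta^p$ is the classical statement that barycentric subdivision is a homeomorphism, so no new work is needed there; the content is to check naturality. For any injection $\theta\in I([p],[p'])$, the induced map on flags is $F\mapsto(\theta(A_0)\subsetneq\dots\subsetneq\theta(A_q))$, and since $\theta$ is injective $|\theta(A_i)|=|A_i|$, so $\theta_*(b_{A_i})=b_{\theta(A_i)}$. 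Thus the square
\begin{equation*}
\begin{array}{ccc}
|\mathrm{sd}(I(-,[p]))| & \xrightarrow{h_{[p]}} & \Delta^p \\
\big\downarrow & & \big\downarrow\theta_* \\
|\mathrm{sd}(I(-,[p']))| & \xrightarrow{h_{[p']}} & \Delta^{p'}
\end{array}
\end{equation*}
commutes, establishing naturality in $[p]\in I$.

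Finally, combining the two preceding points, the natural family $\{h_{[p]}\}$ descends to a canonical homeomorphism $|\mathrm{sd}(X)|\cong|X|$ for every symmetric $\Delta$-complex $X$. The only step that requires care is the one flagged above: the classical proof of the simplex-subdivision homeomorphism is usually written using only order-preserving face maps, so I would want to verify naturality with respect to permutations explicitly as in the paragraph above. This is the main, and essentially only, obstacle; everything else is formal category theory and the classical subdivision homeomorphism on a single simplex.
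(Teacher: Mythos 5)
Your proposal is correct and follows essentially the same route as the paper: reduce by colimit-preservation to the representable case, then construct the barycenter map and check naturality. The paper's own proof is terser (it invokes "the usual way" for the simplex homeomorphism and compresses the colimit argument to one sentence), whereas you spell out two points it leaves implicit — that $|\cdot|$ on ordinary augmented $\Delta$-complexes also preserves colimits via the $\mathrm{Sing}$ adjoint, and that $\theta_*(b_{A_i}) = b_{\theta(A_i)}$ holds for \emph{all} injections $\theta\in I$, not just order-preserving ones, which is exactly the extra naturality needed for the symmetric setting.
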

\begin{proof}
  Since both geometric realization and barycentric subdivision preserve all small colimits, it suffices to construct a natural homeomorphism
  \begin{equation*}
    |I(-,[p])| \cong |\mathrm{sd}(I(-,[p]))|,
  \end{equation*}
  which is done in the usual way: the left hand side is $\Delta^p$, a non-empty subset $A \subset [p]$ determines a face of $\Delta^p$, and the corresponding vertex on the right hand side is sent to the barycenter of that face; extend to an affine map on each simplex.
\end{proof}

\begin{remark}
Colimit presentations may be used to make many other definitions, or illuminate old ones. For example, the \emph{join} $X \ast Y$ of two symmetric $\Delta$-complexes $X$ and $Y$ may be defined by requiring $(I(-,[p])) \ast (I(-,[q])) = I(-,[p] \amalg [q])$ and requiring $X \ast Y$ to preserve colimits in $X$ and $Y$ separately.  The chains functor $X \mapsto C_*(X;R)$ that we defined above also preserves  colimits, so it suffices to define it on representables.  The shifted chains functor, sending $X$ to $C_*(X;R)$ shifted so that $RX_{-1}$ is in degree 0, is characterized up to natural isomorphism by its value on the point $I(-,[0])$ together with the properties that it sends join of symmetric $\Delta$-complexes to tensor product of chain complexes, and preserves all colimits.
\end{remark}

\begin{remark}\label{rem:cones}
  Symmetric $\Delta$-complexes may themselves be regarded as special cases of the \emph{generalized cone complexes} of \cite[\S2]{acp}.  A cone is a topological space $\sigma$ together with an ``integral structure,'' i.e.,\ a finitely generated subgroup of the group of continuous functions $\sigma \to \R$ satisfying a certain condition, and a generalized cone complex is glued out of cones along certain maps: by definition they are presented as colimits of cones, indexed by any small category.
  
  The orthant $\R_{\geq 0}^{[p]}$ equipped with the group generated by the $p+1$ coordinate projections $\R_{\geq 0}^{[p]} \to \R_{\geq 0} \subset \R$ is an example of a cone.  The generalized cone complex corresponding to a symmetric $\Delta$-complex $X$ is then the colimit of orthants, indexed by $([p],x) \in J_X$.
\end{remark}

\subsection{The tropical moduli space as a symmetric $\Delta$-complex}
\label{sec:trop-moduli-space}

Let us return to the tropical moduli space $\Dg$, which we defined in
Section~\ref{sec:graphs}.  To illustrate how the definitions of this section
work for $\Dg$, we will give two descriptions that exhibit $\Dg$ as
the geometric realization of a symmetric $\Delta$-complex.  The
first description presents $\Dg$ as a colimit of a diagram of
symmetric $\Delta$-complexes; the second is an explicit description
as a functor $X\colon I^\mathrm{op} \to \Sets$.

The category $\Jg$ from \S\ref{sec:jgn} has a unique final object: a
single vertex, of weight $g$.  For the first description of $\Dg$ as a
colimit of a diagram of $\Delta$-complexes, choose for each object
$\Gmw \in \Jg$ a bijection $\tau = \tau_\Gmw\colon E(\Gmw) \to [p]$ for
the appropriate $p \geq -1$.  This chosen bijection will be called
the \emph{edge-labeling} of $\Gmw$.  The terminal object has $p = -1$,
and all non-terminal objects have $p \geq 0$.  We require no
compatibility between the edge-labelings for different $\Gmw$, but a
morphism $\phi\colon \Gmw \to \Gmw'$ determines an injection
\begin{equation*}
  [p'] \xrightarrow{\tau_{\Gmw'}^{-1}} E(\Gmw') \xrightarrow{\phi^{-1}} E(\Gmw)
  \xrightarrow{\tau_\Gmw} [p],
\end{equation*}
where the middle arrow is the induced bijection from the edges of
$\Gmw'$ to the non-collapsed edges of $\Gmw$ as in
Definition~\ref{def:E-and-H-as-functors}.  This gives a functor
$F\colon \Jg^\mathrm{op}\rightarrow I$ sending $\Gmw$ to the
codomain $[p]$ of $\tau_\Gmw$, and hence induces a functor from
$\Jg^\mathrm{op}$ to symmetric $\Delta$-complexes, given as
$\Gmw \mapsto I(-,F(\Gmw))$, whose colimit $X$ has geometric
realization $|X| = \Dg$ and cone $CX = M_g^\trop$.

Indeed, colimit commutes with geometric realization by Lemma~\ref{lem:geometric-realization-colimit}, so the geometric
realization of $X$ is the colimit of the functor
$\Gmw \mapsto |I(-,F(\Gmw))|$ from $\Jg^\mathrm{op}$ to $\mathsf{Top}$.  But
we have a homeomorphism $|I(-,F(\Gmw))| = \Delta^p$, where
$[p] = F(\Gmw)$.  Furthermore, if $\Gmw'\in \Jg$ is an object with
$p'+1$ edges, then the injection of label sets $F(\phi)\colon [p'] \to [p]$
determined as above by a morphism $\phi\colon \Gmw\to\Gmw'$, induces a
gluing of the simplex $|I(-,F(\Gmw'))| = \Delta^{p'}$ to a face of
$|I(-,F(\Gmw))| = \Delta^p$.  This agrees with the gluing obtained
from the gluing of $\sigma(\Gmw')$ to a face of $\sigma(\Gmw)$ in
Definition~\ref{definition:mgn} by restricting to the length-one
subspaces $\Delta^p \subset \sigma(\Gmw) = \R_{\geq 0}^{E(\Gmw)}$ and
$\Delta^{p'} \subset \sigma(\Gmw) = \R_{\geq 0}^{E(\Gmw')}$.

For the second description of $\Delta_{g}$ as the geometric
realization of a symmetric $\Delta$-complex, we explicitly describe
a functor $X\colon I^\mathrm{op} \to \Sets$ as follows. The
elements of $X_p$ are equivalence classes of pairs $(\Gmw,\tau)$
where $\Gmw\in \Jg$ and $\tau\colon E(\Gmw) \to [p]$ is an edge labeling;
two edge-labelings are considered equivalent if they are related by an
isomorphism $\Gmw \cong \Gmw'$ (including of course automorphisms).
Here $\Gmw$ ranges over all objects in $\Jg$ with exactly $p+1$
edges.  (Using that in \S\ref{sec:jgn} we tacitly picked one
element in each isomorphism class in $\Jg$, the equivalence relation
is generated by actions of the groups $\mathrm{Aut}(\Gmw)$.)  This
defines $X \colon I^\mathrm{op} \to \Sets$ on objects.

Next, for each injective map $\iota\colon [p']\to[p]$, define the
following map $X(\iota) \colon X_p \to X_{p'}$; given an element
of $X_p$ represented by $(\Gmw,\tau\colon E(\Gmw)\to [p])$,
contract the edges of $\Gmw$ whose labels are not in
$\iota([p'])\subset [p]$, then relabel the remaining edges with labels
$[p']$ as prescribed by the map $\iota$.  The result is a
$[p']$-edge-labeling of some new object $\Gmw'$, and we set
$X(\iota)(\Gmw)$ to be the element of $X_{p'}$ corresponding to it.

Hereafter, we will use $\Delta_g$ to refer to this symmetric $\Delta$-complex and write $|\Delta_g|$ for the topological space.  To avoid double subscripts, we write $\Delta_g([p])$ for the set of $p$-simplices of $\Delta_g$.   Then $H_k(\Delta_g) = \widetilde H_k(|\Delta_g|)$ since $\Delta_g([-1]) = \{\ast\}$.

% In fact, the interpretation of $|\Delta_g|$ as a moduli space of stable tropical curves of genus $g$ and volume 1 described in \S\ref{sec:moduli-space} is, strictly speaking, not logically necessary for the main results of this paper.  Nevertheless, we find this modular interpretation of $|\Delta_g|$ to be a useful point of view.

\section{Graph complexes and cellular chains on $\Dg$}

\begin{comment}
In this section we prove Theorem~\ref{thm:gc}.
  Let $C^{(g)}$ be the rational chain complex with one generator $[\Gmw,\omega]$ of degree $p$ for each object $\Gmw \in \Jg$ and each bijection $\omega$ from $E(\Gmw)$ to an object $[p]$ in $I$.   These generators are subject to the relations $[\Gmw,\omega] = \mathrm{sgn}(\sigma)[\Gmw',\omega']$ if there exists an isomorphism $\Gmw \to \Gmw'$ in $\Jg$ inducing the permutation $\sigma$ of the set $[p] = \{0, \dots, p\}$.
  The differential is $$\partial[\Gmw,\omega] = \sum_{i=0}^p (-1)^i[\Gmw/e_i,\omega|_{\Gmw/e_i}],$$
where $\omega(e_i) = i$ and $\omega|_{\Gmw/e_i} \col E(\Gmw/e_i) \to [p-1]$ makes the following diagram commute.
 $$\xymatrix@R=6mm@C=10mm{E(\Gmw/e_i) \ar[r]^{\omega|_{\Gmw\!/\!e_i}}\ar[d]&[p\!-\!1]\ar[d]^{\delta^i}  \\ E(\Gmw) \ar[r]^{\omega} & [p]}
$$ 
\end{comment}

In this section we prove Theorem~\ref{thm:gc}.
From \S\ref{sec:trop-moduli-space} we read off the following presentation of the cellular chain complex $C^{(g)} = C_*(\Delta_g;\Q)$.  There is one generator $[\Gmw,\omega]$ of degree $p$ for each object $\Gmw \in \Jg$ and each bijection $\omega$ from $E(\Gmw)$ to an object $[p]$ in $I$, subject to the relations $[\Gmw,\omega] = \mathrm{sgn}(\sigma)[\Gmw',\omega']$ if there exists an isomorphism $\Gmw \to \Gmw'$ in $\Jg$ inducing the permutation $\sigma$ of the set $[p] = \{0, \dots, p\}$.
  The differential is $$\partial[\Gmw,\omega] = \sum_{i=0}^p (-1)^i[\Gmw/e_i,\omega|_{\Gmw/e_i}],$$
where $\omega(e_i) = i$ and $\omega|_{\Gmw/e_i} \col E(\Gmw/e_i) \to [p-1]$ makes the following diagram commute.
 $$\xymatrix@R=6mm@C=10mm{E(\Gmw/e_i) \ar[r]^{\omega|_{\Gmw\!/\!e_i}}\ar[d]&[p\!-\!1]\ar[d]^{\delta^i}  \\ E(\Gmw) \ar[r]^{\omega} & [p]}
$$ 
Proposition~\ref{prop:cellularhomology} specializes to the following.
\begin{lemma}
  There is an isomorphism $\widetilde H_k(|\Delta_g|;\Q) \cong H_k(C^{(g)})$. \qed
\end{lemma}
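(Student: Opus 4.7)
My approach is to identify $C^{(g)}$ with the cellular chain complex $C_*(\Delta_g;\Q)$ of the symmetric $\Delta$-complex $\Delta_g$ constructed in \S\ref{sec:trop-moduli-space}, and then apply Proposition~\ref{prop:cellularhomology}. Note that, although the description of generators in the definition of $C^{(g)}$ mentions $[p] \in \Delta_{\mathrm{inj}}$, it is essential to also allow the terminal graph of $J_g$ (the single vertex of weight $g$ with no edges) as a generator in degree $p=-1$, corresponding to the singleton $\Delta_g([-1])$; this is what lets us recover \emph{reduced} homology.

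First I will match chain groups. By the explicit description in \S\ref{sec:trop-moduli-space}, $\Delta_g([p])$ is the set of equivalence classes of pairs $(\Gmw,\tau)$ with $\Gmw \in J_g$ having exactly $p+1$ edges and $\tau\colon E(\Gmw) \to [p]$ a bijection, where two such pairs are identified when related by an isomorphism in $J_g$. By Definition~\ref{def:cellular-chains-set}, $C_p(\Delta_g;\Q) = \Q^{\mathrm{sgn}} \otimes_{\Q S_{p+1}} \Q\Delta_g([p])$, so it is generated by symbols $[\Gmw,\tau]$ subject to two relations: the isomorphism relation $[\Gmw,\tau] = [\Gmw',\tau \circ \phi_*^{-1}]$ for isomorphisms $\phi\colon \Gmw \to \Gmw'$ in $J_g$, and the sign relation $[\Gmw,\sigma \circ \tau] = \mathrm{sgn}(\sigma)[\Gmw,\tau]$ for $\sigma \in S_{p+1}$. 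Composing them produces $[\Gmw',\omega'] = \mathrm{sgn}(\sigma)[\Gmw,\omega]$ whenever an isomorphism $\phi\colon \Gmw \to \Gmw'$ induces $\sigma = \omega' \circ \phi_* \circ \omega^{-1}$, which is exactly the defining relation of $C^{(g)}_p$. Hence there is a canonical $\Q$-linear isomorphism $C^{(g)}_p \xrightarrow{\cong} C_p(\Delta_g;\Q)$ in each degree.

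Next I will check that the differentials coincide. The cellular boundary is induced by $\sum_{i=0}^p (-1)^i (d_i)_*$, where the face map $d_i$ comes from the injection $\delta^i\colon [p-1] \hookrightarrow [p]$. By the formula in \S\ref{sec:trop-moduli-space}, $d_i[\Gmw,\tau]$ is obtained by contracting the edge of $\Gmw$ labeled $i$ and then relabeling the surviving edges along $\delta^i$. This matches \eqref{eq:1} on the nose, and since both differentials are defined on generators and extended linearly, the identification of chain groups is a chain map.

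Finally I will invoke Proposition~\ref{prop:cellularhomology}. Its hypothesis on orders of stabilizers is automatic over $\Q$, since the stabilizers of $S_{p+1}$ on $\Delta_g([p])$ are finite, being subquotients of automorphism groups of finite graphs. Because $\Delta_g([-1])$ is a singleton, the proposition identifies $H_p(C_*(\Delta_g;\Q))$ with $\widetilde H_p^{\sing}(|\Delta_g|;\Q)$, completing the proof. The only place where real care is needed is the sign bookkeeping in the first step to verify that the two sets of relations really do match up, but this is a routine calculation; I expect no substantive obstacle.
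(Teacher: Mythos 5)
Your proof is correct and takes essentially the same approach as the paper: the paper's own proof is the one-line observation that the cellular chain complex of the second description of $\Delta_g$ from \S\ref{sec:trop-moduli-space} is isomorphic to $C^{(g)}$, and you have simply spelled out the matching of generators, relations, and differentials. Your opening remark about the degree $p=-1$ generator is a worthwhile clarification: as literally stated, the definition of $C^{(g)}$ only allows $[p]\in\Delta_{\mathrm{inj}}$, so it omits the terminal graph with empty edge set; but the later proof that $B^{(g),0}$ is acyclic implicitly uses that the $h=0$ graph (a single vertex of weight $g$, no edges) is a generator in degree $-1$, so the augmented convention you identify is in fact the one the paper intends, and is needed for $H_k(C^{(g)})$ to agree with reduced rather than unreduced homology of $|\Delta_g|$.
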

% \begin{proof}
%   The cellular chain complex from \S\ref{sec:cellular-chains} applied to the particular $X\colon I^\mathrm{op} \to \Sets$ described in \S\ref{sec:trop-moduli-space}, more precisely the ``second description'' in that section, is isomorphic to $C^{(g)}$.
% \end{proof}

\begin{definition}
  Let $B^{(g)} \subset C^{(g)}$ be the subcomplex spanned by those $[\Gmw,\omega]$ which have a vertex with positive weight, and let $A^{(g)} = C^{(g)} / B^{(g)}$ be the quotient chain complex.
\end{definition}

Hence $A^{(g)}$ is spanned by those $[\Gmw,\omega]$ in which $w(v)=0$ for all vertices $v$.  The boundary map on $A^{(g)}$ is still an alternating sum of edge contractions, now identifying $[\Gmw/e,\omega\vert_{\Gmw/e}] = 0$ when $e \in E(\Gmw)$ is a loop.  Comparing definitions, we get the following.

\begin{lemma}
  The chain complex $A^{(g)}$ is isomorphic to a shift of Kontsevich's graph complex $G^{(g)}$.  In our grading conventions, the isomorphism is
  \begin{align*}
    G^{(g)}_k & \to A^{(g)}_{k + 2g-1}\\
    \pushQED{\qed} 
    [\Gmw,\omega] & \mapsto [\Gmw,\omega].
                    \qedhere
                    \popQED
  \end{align*}
\end{lemma}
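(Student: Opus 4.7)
The plan is to check that the obvious set-level bijection on generators, sending $[\Gmw,\omega] \in G^{(g)}_k$ to the class of the same pair in $A^{(g)}_{k+2g-1}$, respects relations, gradings, and differentials. This consists of four essentially bookkeeping verifications; none is hard, but one must keep track of conventions.

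First, I would identify the generators. For $\Gmw = (G,w) \in J_g$ with $w\equiv 0$, the stability inequality $2w(v) - 2 + \val(v) > 0$ reduces to $\val(v) \geq 3$, so $A^{(g)}$-generators are exactly loopless connected trivalent-or-higher graphs of genus $g$, which is precisely Kontsevich's class. An edge-labeling $\omega\colon E(\Gmw) \to [p]$ with $p = |E(\Gmw)| - 1$ is the same thing as a total ordering of $E(\Gmw)$, so the generating sets are literally the same.

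Second, I would check the relations. In $A^{(g)} \subset C^{(g)}$ we impose $[\Gmw,\omega] = \mathrm{sgn}(\sigma)[\Gmw',\omega']$ whenever there is a morphism of $J_g$ which is an isomorphism relating the two edge-labelings by the permutation $\sigma$. When all weights are zero, an isomorphism in $J_g$ is just a graph isomorphism (preserving the trivially-zero weights), and this is the same relation Kontsevich imposes on $G^{(g)}$. The grading shift is the content of the elementary count $|V(\Gmw)| = |E(\Gmw)| - g + 1$ for a loopless connected genus $g$ graph: the $A^{(g)}$-degree $p = |E(\Gmw)| - 1$ equals $(|V(\Gmw)| - (g+1)) + (2g - 1)$, so a Kontsevich-degree $k$ class sits in $A^{(g)}$-degree $k + 2g - 1$.

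Third, I would match the differentials. By the second (explicit) description of $\Delta_g$ as a symmetric $\Delta$-complex in \S\ref{sec:trop-moduli-space}, the face map $d_i = X(\delta^i)\colon \Delta_g([p]) \to \Delta_g([p-1])$ sends $[\Gmw,\omega]$ to the result of contracting the edge with label $i$ and relabeling the remaining edges by the unique order-preserving bijection $[p] \setminus \{i\} \to [p-1]$. So $\partial = \sum_i (-1)^i (d_i)_*$ in $C^{(g)}$ is identical term-by-term to Kontsevich's formula $\partial[\Gamma,\omega] = \sum_i (-1)^i [\Gamma/e_i, \omega\vert_{E(\Gamma/e_i)}]$.

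Finally, I would note that the differential does preserve the subcomplex $A^{(g)}$: contracting an edge of a loopless graph could only create a loop if that edge were part of a parallel pair, but then the class is already zero by the sign relation applied to the edge-swap automorphism (and similarly for weights). This was observed in the preceding lemma. The expected obstacle is purely notational: ensuring that the $\mathrm{sgn}(\sigma)$ relation in $C^{(g)}$ (inherited from the definition of cellular chains as $\Q^{\mathrm{sgn}} \otimes_{\Q S_{p+1}} \Q X_p$) matches the orientation relation in $G^{(g)}$ exactly, with no accidental shift. Since both arise from the same antisymmetrization over $S_{p+1}$ acting on edge labels, this matches on the nose and the map is the asserted isomorphism.
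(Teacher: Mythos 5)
Your proof is correct and follows the same strategy as the paper: identify generators, match relations under isomorphism, match the differentials, and verify the degree shift. The only cosmetic difference is that the paper checks the degree shift via the wheel-graph example (a $0$-cycle in $G^{(g)}$ with $2g$ edges giving a $(2g-1)$-simplex in $\Delta_g$) while you give the general computation $|E(\Gmw)|-1 = (|V(\Gmw)|-(g+1)) + (2g-1)$; these are interchangeable.
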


The next proposition implies that the homomorphism
\begin{equation*}
  \widetilde H_{k + 2g-1}(|\Delta_g|;\Q) \to H_k(G^{(g)})
\end{equation*}
induced by the quotient $C^{(g)} \twoheadrightarrow A^{(g)}$ is an isomorphism.
It is quite similar to the acyclicity result established in \cite[Theorem 2.2]{ConantGerlitsVogtmann05}.
\begin{proposition}
The chain complex $B^{(g)}$ has vanishing homology in all degrees.
\end{proposition}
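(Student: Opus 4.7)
The plan is to prove $B^{(g)}$ acyclic by constructing an explicit contracting chain homotopy $h$, exploiting the duality between loop edges and units of vertex weight: contracting a loop at a vertex $v$ in a graph complex generator converts it into one extra unit of $w(v)$. As a preliminary normalization, I observe that any nonzero generator $[\Gmw,\omega]\in C^{(g)}$ has no parallel non-loop edges and no two loops at the same vertex, since swapping a parallel pair is an odd automorphism; in particular every vertex of a nonzero generator carries at most one loop.

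I would filter $B^{(g)}$ by total vertex weight $W(\Gmw):=\sum_v w(v)$, letting $F_{\geq k}\subseteq B^{(g)}$ be the subcomplex spanned by $[\Gmw,\omega]$ with $W(\Gmw)\geq k$. Since contracting a non-loop edge preserves $W$ while contracting a loop raises $W$ by one, each $F_{\geq k}$ is indeed a subcomplex, and the filtration is bounded by $F_{\geq g+1}=0$. By a standard five-lemma induction along the short exact sequences $0\to F_{\geq k+1}\to F_{\geq k}\to F_{\geq k}/F_{\geq k+1}\to 0$, it suffices to show each successive quotient is acyclic, including the bottom quotient $B^{(g)}/F_{\geq 1}$ spanned by zero-weight graphs with at least one loop. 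In each quotient the differential reduces to non-loop edge contractions only, since loop contractions raise $W$ and vanish modulo $F_{\geq k+1}$.

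On $F_{\geq k}/F_{\geq k+1}$ with $k\geq 1$, I would construct $h$ as follows. For each generator $[\Gmw,\omega]$, canonically distinguish a positive-weight vertex $v^\ast$ via the edge ordering $\omega$ together with a prescribed tie-breaking rule. If $v^\ast$ already carries a loop, set $h([\Gmw,\omega])=0$; otherwise let $h([\Gmw,\omega])=\pm[\Gmw^+,\omega^+]$, where $\Gmw^+$ is $\Gmw$ with a new loop attached at $v^\ast$ and $w(v^\ast)$ decreased by one, and $\omega^+$ places the new loop at index $0$. The identity $\partial h+h\partial=\mathrm{id}$ then follows by a sign computation: contracting the newly inserted loop in $\partial h([\Gmw,\omega])$ returns $[\Gmw,\omega]$ exactly, while the remaining contributions of $\partial h$ and $h\partial$ cancel in pairs matching ``add loop, then contract old edge'' against ``contract old edge, then add loop.'' The bottom quotient $B^{(g)}/F_{\geq 1}$ is handled by an analogous construction using a canonically chosen loop rather than a canonical positive weight.

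The principal technical obstacle is choosing the canonical vertex $v^\ast$ (and the analogous canonical loop in the bottom case) so that the choice is stable under the relevant edge contractions, and then carrying out the sign verification. Both amount to straightforward but tedious case analyses exploiting the edge ordering $\omega$ and the fact that, once multi-edges are ruled out, the combinatorics of ``add a loop somewhere'' and ``collapse a non-loop edge'' commute up to a controlled sign.
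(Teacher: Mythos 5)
Your proposed chain homotopy does not preserve the associated graded pieces of your own filtration, which is the decisive gap. If $\Gmw$ lies in $F_{\geq k}/F_{\geq k+1}$, so $W(\Gmw)=k$, then $\Gmw^+$ --- obtained by adding a loop at $v^\ast$ and decreasing $w(v^\ast)$ by one --- has $W(\Gmw^+)=k-1$. Hence $[\Gmw^+,\omega^+]$ lies outside $F_{\geq k}$, and $h$ is not a well-defined endomorphism of $F_{\geq k}/F_{\geq k+1}$; the identity $\partial h + h\partial = \mathrm{id}$ cannot even be formulated there. (The quantity your $h$ actually preserves is $W+L$, where $L$ is the number of loops; since the differential also preserves $W+L$ --- a non-loop contraction changes neither, a loop contraction trades one loop for one unit of weight --- $B^{(g)}$ splits as a direct sum by $W+L$, and one could attempt to build $h$ directly on each summand. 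But then the filtration by $W$ alone plays no role, and the argument would need to be rebuilt.)

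Even granting such a repair, the argument is not complete. You never construct the ``canonical'' positive-weight vertex $v^\ast$, and it is not clear that a choice stable under all edge contractions in $\partial$ exists: contracting a loop at a different vertex $u$ raises $w(u)$, and contracting a non-loop edge adjacent to $v^\ast$ merges it with a neighbor and changes the weight profile, so a rule like ``heaviest vertex, tie-broken via $\omega$'' is not obviously stable. If adding the loop creates an odd automorphism of $\Gmw^+$, then $h[\Gmw,\omega]=0$ and the identity must be verified from $h\partial$ alone, a case you do not address. And the bottom quotient $W=0$ cannot use the operation ``add a loop, decrease weight,'' and it is not stated what degree-raising operation ``using a canonically chosen loop'' is supposed to be. For comparison, the paper avoids the sign bookkeeping entirely: it introduces \emph{stems} (edges separating off a weight-$1$ vertex after contracting all others), shows every graph in $B^{(g)}$ is a stem-contraction of an essentially unique maximal graph, filters by the number of non-stem edges, and identifies each piece of the associated graded with the relative cellular chains of $(\Delta^{|E(\Gmw)|-1}/\!\Aut(\Gmw),\, Z/\!\Aut(\Gmw))$ for an $\Aut(\Gmw)$-equivariant deformation retraction of $\Delta^{|E(\Gmw)|-1}$ onto $Z$.
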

  
  The complex $B^{(g)}$ calculates the reduced homology of the subspace of $\Delta_g$ consisting of graphs containing a vertex of positive weight.  In \cite{cgp3} we shall show that this space is in fact contractible.

\begin{proof} For any $\Gmw$ and any $e\in E(\Gmw)$, say $e$ is a {\em stem} if it separates a single weight-one vertex from the rest of $G$, i.e., if $G$ looks like
  \begin{equation*}
    {\scriptstyle 1}\bullet\! \stackrel{e}{\text{---}} \;?
  \end{equation*}
In particular a loop cannot be a stem.  Notice that for distinct edges $e,f\in E(\Gmw)$ such that $f$ is not a loop, $e$ is a stem in $\Gmw$ if and only if its image in $\Gmw/f$ is a stem.  Notice also that $\Gmw$ has a vertex of positive weight if and only if it admits a morphism from some $\Gmw'$ having a stem.  

Suppose $\Gmw$ has a vertex of positive weight. Let $\widetilde\Gmw$ be the graph obtained from $\Gmw$ as follows: for any vertex $v$ in $\Gmw$ of weight $w>0$, other than a vertex of weight $1$ already separated from the rest of $\Gmw$ by a stem, replace $v$ with a single vertex of weight zero incident to $w$ stems.  (If, and only if, $\Gmw$ is a single vertex of weight $2$, it is necessary to suppress the resulting unstable vertex of weight $0$ and valence 2 in $\widetilde\Gmw$.)  %Note the natural
There is a canonical morphism $\phi\col \widetilde{\Gmw}\to\Gmw$ which contracts all stems. Moreover, $\widetilde{\Gmw}$ is the unique ``maximal stem-uncontraction'' of $\Gmw$ in the following precise sense: for any morphism $\phi'\colon \Gmw' \to \Gmw$ that may be factored as a sequence of stem-contractions and  isomorphisms, there exists a morphism $\psi\colon \widetilde\Gmw \to \Gmw'$ with $\phi = \phi'\circ \psi$.

For $i\ge 0$, let $B^{(g),i}$ denote the subcomplex of $B^{(g)}$ spanned by graphs ${\Gmw}$ with at most $i$ edges that are {\em not} stems.  Then the subcomplexes $B^{(g),i}$, for $i=0,\ldots,3g-3$, filter $B^{(g)}$. 

Next, for each $i>0$, we claim vanishing of relative homology of the pair $(B^{(g),i}, B^{(g),i-1}).$  The chain complex associated to this pair is generated by those $[\Gmw,\omega]$ having a  positive weight, satisfying in addition that $\Gmw$ has exactly $i$ non-stem edges.  Furthermore, the boundary of $[\Gmw,\omega]$ is  a signed sum of 1-edge-contractions by stems.  
We claim that this chain complex is a direct sum of subcomplexes $B^{(g),i}(\Gmw),$ one for each $\Gmw$ with $i$ non-stems that is maximal in the sense discussed above.  Here, $B^{(g),i}(\Gmw)$ is the subcomplex of $B^{(g),i}$ with a generator in degree $p$ for each $[{\bf H},\omega]$ for $\bf H$ isomorphic to a stem-contraction of $\Gmw$ and $\omega\col E({\bf H}) \to [p]$ a bijection, with relations $[{\bf H},\omega] = \operatorname{sgn}(\sigma)[{\bf H}',\omega']$ whenever there is a $\sigma\in S_{p+1}$ and an isomorphism ${\bf H} \to {\bf H}'$ taking $\omega$ to $\sigma\circ\omega'.$
The fact that the chain complex does indeed split as a direct sum follows from the fact that if $\Gmw$ and $\Gmw'$ are both maximal stem-uncontractions of a given graph with a vertex of positive weight, then $\Gmw$ and $\Gmw'$ admit morphisms to each other, hence are isomorphic in $\Jg$.

Now we claim that each $B^{(g),i}(\Gmw)$ is acyclic. Indeed, we will show that it is isomorphic to the rational cellular chain complex associated to the pair 
$$(\Delta^{|E(\Gmw)|-1}/\!\Aut(\Gmw), Z/\!\Aut(\Gmw))$$
where $Z$ is the union of the $i$ facets of $\Delta^{|E(\Gmw)|-1}$ that contain all vertices of $\Delta^{|E(\Gmw)|-1}$ corresponding to stems of $\Gmw$. Since $0<i<|E(\Gmw)|,$  there is a natural deformation retraction of $\Delta^{|E(\Gmw)|-1}$ onto $Z$, and this retraction  is $\Aut(\Gmw)$-equivariant.  Therefore the cellular chain complex of the pair must be acyclic.  

The cellular chain complex of the pair $(\Delta^{|E(\Gmw)|-1}/\!\Aut(\Gmw), Z/\!\Aut(\Gmw))$ has a generator in degree $p$ for each $\Aut(\Gmw)$-equivalence class $[S,\omega]$, where $S\subseteq E(\Gmw)$ is a set of stem edges and $\omega\col E(\Gmw)\setminus S \to [p]$ is a bijection, with relations
$[S,\omega] = \operatorname{sgn}(\sigma)[S,\sigma\circ \omega]$ for $\sigma\in S_{p+1}$.   There is a map from this chain complex to $B^{(g),i}(\Gmw)$, sending $[S,\omega]\mapsto [\Gmw/S,\omega]$, and this map is evidently surjective. In fact it is an isomorphism: if $S,S'\subseteq E(\Gmw)$ are two sets of stem edges with $\Gmw/S\cong \Gmw/S'$, then by the maximality property of $\Gmw$ discussed above, there is a morphism $\Gmw\to \Gmw$ making the diagram
$$\xymatrix{\Gmw\ar[r]^\cong \ar[d] & \Gmw \ar[d] \\ \Gmw/S \ar[r]^\cong & \Gmw/S'}$$
commute. Moreover, any morphism $\Gmw\to \Gmw$ is an automorphism. Therefore $S$ and $S'$ are in the same $\Aut(\Gmw)$-orbit.  
This proves that $B^{(g),i}(\Gmw)$ is acyclic, as claimed.

It remains to see that $B^{(g),0}$ is acyclic.  Generators are graphs in which every edge is a stem. It is not hard to classify such $\Gmw$: if $g>2$, there is one isomorphism class for each $h \in \{0, \dots, g\}$, given by a graph with a single central vertex of weight $g-h$, to which are attached $h$ stems. (If $g=2$, then the classification is the same, but $h$ must be in $\{0,1\}$.)  For $h \geq 2$, these graphs all admit odd automorphisms, so the corresponding generator for the graph complex vanishes.  Thus $B^{(g),0}$ is rank two, generated by the $h =1$ and $h = 0$ graphs, and the boundary of the $h=1$ graph is the $h=0$ graph.
\end{proof}

\section{Boundary complexes} \label{sec:stackboundary}

The theory of dual complexes for simple normal crossings divisors is
well-known.  They may be constructed as $\Delta$-complexes, with the $\Delta$-complex structure depending on a choice of total ordering on the irreducible components of the divisor.  Many applications
involve the fact that the homotopy types (and even simple homotopy types) of \emph{boundary complexes},
the dual complexes of boundary divisors in simple normal crossings
compactifications, are independent of the choice of compactification.  The same is also true for Deligne-Mumford (DM) stacks \cite{Harper17}.
Boundary complexes were introduced and studied by Danilov in the 1970s
\cite{Danilov75}, and have become an important focus of research
activity in the past few years, with new connections to Berkovich
spaces, singularity theory, geometric representation theory, and the
minimal model program.  See, for instance, \cite{Stepanov08, ABW13,
  boundarycx, KollarXu16, Simpson16, deFernexKollarXu17}.

In order to apply combinatorial topological properties of $\Dg$ to
study the moduli space of curves $\cM_{g}$ using the
compactification by stable curves, we must account for the facts that
$\cM_{g}$ and $\ocM_{g}$ are stacks, not varieties, and that the
boundary divisor in $\ocM_{g}$ has normal crossings, but not simple
normal crossings.  The latter of those two complications is the more
serious one; when the irreducible components of the strata have
self-intersections, the fundamental groups of strata may act
nontrivially by monodromy on the analytic branches of the boundary and
this needs to be accounted for. Once that is
properly understood, passing from varieties to stacks is relatively
straightforward.

In this section we explain how dual complexes of normal crossings
divisors are naturally interpreted as symmetric $\Delta$-complexes
and, in particular, the dual complex of the boundary divisor in the
stable curves compactification of $\cM_{g}$ is naturally identified
with~$\Dg$.  

\subsection{Dual complexes of simple normal crossings
  divisors} \label{sec:dual-compl-simple}

We begin by recalling the notion of dual complexes of simple normal
crossings divisors, using the language of symmetric $\Delta$-complexes introduced in Section~\ref{sec:cellular}.  In
\S\ref{sec:dual-compl-norm}, we will explain how to interpret dual
complexes of normal crossings divisors in smooth DM
stacks as symmetric $\Delta$-complexes of \S\ref{sec:cellular-chains}. Here and throughout,
all of the varieties and stacks that we consider are over the complex
numbers, and all stacks are separated and DM.

Let $X$ be a $d$-dimensional smooth variety.  Recall (cf.\ \cite[Tag 0BI9]{stacks-project}) that a
\emph{simple normal crossings} divisor is an effective Cartier divisor  $D \subset X$ which is Zariski locally cut out by $x_1 \cdots x_d$ for a regular system of parameters $x_1, \dots, x_d$ in the local ring at any $p \in D$.  
The {\em strata} of $D$ may be defined inductively as
follows.  The $(d-1)$-dimensional strata of
 $D$ are the irreducible components of the smooth locus of $D$. For
each $i<d-1$, the $i$-dimensional strata are the irreducible
components of the regular locus of the complement of the union of all strata of $D$ of dimension greater than $i$.

If $D \subset X$ has simple normal crossings, then the dual complex
$\Delta(D)$ is naturally understood as a regular symmetric
$\Delta$-complex whose geometric realization has one vertex for each
irreducible component of $D$, one edge for each irreducible component
of a pairwise intersection, and so on. The inclusions of faces
correspond to containments of strata.  It is augmented, with
$(-1)$-simplices the set of irreducible components (equivalently,
connected components) of $X$.  Equivalently, using our
characterization of symmetric $\Delta$-complexes in terms of
presheaves on the category $I$ given in \S\ref{sec:symmetric},
$\Delta(D)$ is the presheaf whose value on $[p]$ is the set of pairs
$(Y, \phi)$, where $Y \subset X$ is a stratum of codimension $p+1$,
i.e., codimension $p$ in $D$ for $p \geq 0$, and $\phi$ is an ordering
of the components of $D$ that contain $Y$, with maps induced by
containments of strata. Dual complexes can also be defined in exactly
the same way for simple normal crossings divisors in DM stacks.

\begin{remark}
  In the literature, it is common to fix an ordering of the
  irreducible components of the simple normal crossings divisor $D$.
  The corresponding ordering of the vertices induces a
  $\Delta$-complex structure on $\Delta(D)$.  Working with dual
  complexes as symmetric $\Delta$-complexes may be slightly more natural, in that
  it avoids this choice of an ordering, and certainly it generalizes better to the
  construction of dual complexes for divisors with (not necessarily simple) normal crossings as symmetric
  $\Delta$-complexes, given in \S\ref{sec:dual-compl-norm}.

  In the literature it is also commonly assumed that $X$ is
  irreducible, and hence there is no need for keeping track of
  $(-1)$-simplices and augmentations.  This is sufficient for studying
  one irreducible variety at a time, but comes with some
  technical inconveniences.  In particular, certain auxiliary
  constructions, such as the \'etale covers and fiber products appearing later in this
  section, do not preserve irreducibility.  It is convenient to set up
  the language in a way that applies without assuming
  irreducibility.
\end{remark}

\subsection{Dual complexes of normal crossings divisors}
\label{sec:dual-compl-norm}

We now discuss the generalization to normal crossings divisors $D$ in
a smooth DM stack $X$ which are not necessarily simple normal
crossings, i.e.,\ the irreducible components of $D$ are not
necessarily smooth and may have self-intersections.  This situation is
more subtle, even for varieties, due to monodromy.  In the stack
case, when the boundary strata have stabilizers, the monodromy
may be nontrivial even for zero-dimensional strata. This phenomenon
appears already at the zero-dimensional strata of
$\overline {\mathcal{M}}_{g}$ given by stable curves having nontrivial
automorphisms, i.e.,\ the strata corresponding to (unweighted)
trivalent graphs of first Betti number $g$ with nontrivial
automorphisms.

Let $X$ be a smooth variety or DM stack, not necessarily irreducible.
Recall that a divisor $D \subset X$ has normal crossings if and only
if there is an \'etale cover by a smooth variety $X_0 \rightarrow X$
in which the preimage of $D$ is a divisor with simple normal
crossings.  
In this situation, $D$ is a simple normal crossings divisor if all irreducible components are smooth.
Note that this \'etale local characterization of normal
crossings divisors is the same for varieties and DM stacks.

In this situation the dual complex may be defined directly as a functor
$I^\mathrm{op} \to \Sets$, in the following way.  Let $\widetilde{D} \to X$ denote the normalization of $D \subset X$, and for $[p] \in I$ write
\begin{equation*}
  \widetilde{D}_p = (\widetilde{D} \times_X \dots \times_X \widetilde{D}) \setminus \{(z_0, \dots, z_p) \mid \text{$z_i = z_j$ for some $i \neq j$}\}.
\end{equation*}
We have $\widetilde{D}_0 = \widetilde{D}$ and $\widetilde{D}_{-1} = X$.  Then $\widetilde{D}_p \to X$ is a local complete intersection morphism whose conormal sheaf is a vector bundle of rank $(p+1)$ (\cite[Tag 0CBR]{stacks-project}).  In particular $\widetilde{D}_p$ is smooth over $\C$ of dimension $d-p$ if $X$ is smooth over $\C$ of dimension $d+1$.

\begin{definition}\label{defn:boundary-complex-as-generalized-Delta}
  Let $X$ be a smooth variety or DM stack, let $D \subset X$ be a
  normal crossings divisor, and write $\widetilde{D}_p \to X$ for the
  construction defined for all $[p] \in I^\mathrm{op}$ above.  In this situation, define
  the symmetric $\Delta$-complex $\Delta(D)$ by letting $\Delta(D)_p$
  be the set of irreducible components (= connected components) of $\widetilde{D}_p$.
\end{definition}  
\noindent We point out that in the case of stacks, the association $[p] \mapsto \widetilde{D}_p$ will only be a pseudofunctor, but the set of irreducible components will be functorial in $[p] \in I$.

We note that a closed point of $\widetilde{D}_p$ corresponds precisely to a closed point $x$ in a codimension $p$ stratum of $D$, together with an ordering $\sigma$ of the $p+1$ local analytic branches of $D$.  Hence $\Delta(D)_p$ may be described more transcendentally as the set of equivalence classes of pairs $(x, \sigma)$, where $(x,\sigma)$ is equivalent to $(x',\sigma')$ if there is a path (continuous in the analytic topology) within the stratum connecting $x$ to $x'$, and that following the ordering of the branches along this path takes $\sigma$ to $\sigma'$.

\begin{remark}\label{remark:simplicial-complexes}
  Recall that a $\Delta$-complex $X$ is \emph{regular} if the maps
  $\Delta^p \to |X|$ associated to $\sigma \in X_p$ for all
  $p \geq 0$ are all injective.  This definition makes sense equally
  well for symmetric $\Delta$-complexes $X$ and is equivalent to the
  condition that every edge of $X$ has two distinct endpoints, i.e.,
  for any $e \in X_1$, we have $d_0(e) \neq d_1(e)$.

  The dual complex of a normal crossings divisor will be a regular symmetric
  $\Delta$-complex exactly when $D$ has \emph{simple} normal
  crossings, meaning that every irreducible component of $D$ is
  smooth. 
  Indeed, the irreducible components of $D$ are smooth if and only if at every codimension $1$ stratum of $D$, the two analytic branches belong to distinct irreducible components.  This is equivalent to the condition that $d_0(e)\ne d_1(e)$ for $e\in \Delta(D)_1$. 
    \end{remark}

The comparison of Definition~\ref{defn:boundary-complex-as-generalized-Delta} with \cite{acp} uses the following \'etale descent result, whose proof we omit (see \cite{CGP1v1}).  Let $D$ and $D'$ be normal crossings divisors in $X$ and $X'$, respectively.  Suppose $\pi \colon X' \to X$ is \'etale and $D'$ is the preimage of $D$. Then $\pi$ induces natural maps $\widetilde D'_p \to \widetilde D_p$ for all $p$, and hence a morphism $\Delta(D') \to \Delta(D)$. Thus $(X,D) \mapsto \Delta(D)$ is functorial for such \'etale maps.  Together with the discussion in \S\ref{sec:dual-compl-simple}, the following \'etale descent property completely characterizes this functor.

\begin{lemma}\label{lemma:etale-descent} With boundary complexes defined as in Definition~\ref{defn:boundary-complex-as-generalized-Delta},
  the association $(X,D) \mapsto \Delta(D)$ satisfies \'etale descent in
  the sense that if $X_0 \to X$ is an \'etale cover and
  $X_1 =X_0 \times_X X_0$, then
  \begin{equation*}
    \Delta(D\times_X X_1) \double \Delta(D \times_X X_0) \to\Delta(D)
  \end{equation*}
  is a coequalizer diagram.
\end{lemma}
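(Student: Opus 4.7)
The plan is to reduce the statement to a level-by-level calculation and then invoke étale descent for connected components. Colimits of presheaves of sets on $I$ are computed objectwise, so the coequalizer of symmetric $\Delta$-complexes on the right-hand side of the diagram is the level-wise coequalizer of sets. Thus it suffices to show that for every $[p] \in I$, the sequence
\begin{equation*}
\Delta(D \times_X X_1)_p \rightrightarrows \Delta(D \times_X X_0)_p \to \Delta(D)_p
\end{equation*}
is a coequalizer in $\mathrm{Sets}$. Using Definition~\ref{defn:boundary-complex-as-generalized-Delta}, this amounts to the statement that
\begin{equation*}
\pi_0\bigl(\widetilde{D}_p \times_X X_1\bigr) \rightrightarrows \pi_0\bigl(\widetilde{D}_p \times_X X_0\bigr) \to \pi_0\bigl(\widetilde{D}_p\bigr)
\end{equation*}
is a coequalizer of sets, provided we first verify the base-change identity $\widetilde{(D \times_X X_i)}_p = \widetilde{D}_p \times_X X_i$ for $i = 0,1$.

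For the base-change identity, I would argue in two pieces. First, normalization of a reduced scheme (or DM stack) commutes with smooth, and in particular étale, base change; since $X_0 \to X$ is étale and $\widetilde{D} \to D$ is the normalization, we obtain $\widetilde{D \times_X X_0} = \widetilde{D} \times_X X_0$, and similarly for $X_1$. Second, the open locus $\widetilde{D}_p \subset \widetilde{D}^{\times_X (p+1)}$ is defined by the removal of the fat diagonal $\{(z_0,\dots,z_p) : z_i = z_j \text{ for some } i\neq j\}$, and this construction manifestly commutes with flat base change on $X$ since both the iterated fiber product and the fat diagonal do. Combining, we obtain the required identification, functorially in $[p] \in I$; in the stack setting this takes place after unwinding that the association $[p]\mapsto \widetilde{D}_p$ is a pseudofunctor, but since we only care about the induced diagram of $\pi_0$-sets, the pseudofunctoriality causes no trouble.

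It remains to show the étale descent statement for $\pi_0$: for any smooth DM stack $Y$ over $X$ (applied to $Y = \widetilde{D}_p$, which is smooth of dimension $d-p$) and any étale cover $X_0 \to X$, the set $\pi_0(Y)$ is the coequalizer of $\pi_0(Y \times_X X_1) \rightrightarrows \pi_0(Y \times_X X_0)$. Passing to coarse moduli (equivalently, working with the underlying analytic topological spaces), the map $Y \times_X X_0 \to Y$ is étale and surjective, hence open and surjective, and so is a topological quotient map identifying $Y$ with the quotient of $Y\times_X X_0$ by the equivalence relation presented by $Y \times_X X_1$. Since the functor $\pi_0$ on topological spaces preserves colimits, in particular coequalizers of open equivalence relations, this gives the desired coequalizer statement at the level of sets, and hence the lemma.

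The main obstacle I expect is the careful verification of the base-change identity for $\widetilde{D}_p$ in the stack setting; once that is in place, the descent for $\pi_0$ is standard point-set topology. An alternative bookkeeping route would be to bypass $\widetilde{D}_p$ altogether and directly use the equivalent transcendental description of $\Delta(D)_p$ in terms of equivalence classes of pairs $(x,\sigma)$ of a point of a codimension-$p$ stratum together with a local ordering of analytic branches, given in the paragraph following Definition~\ref{defn:boundary-complex-as-generalized-Delta}. From that description the coequalizer is visibly computed by lifting such a pair through an étale atlas and identifying two lifts when they become equal after a further étale refinement, which is exactly the coequalizer data provided by the Čech nerve $X_1 \rightrightarrows X_0$.
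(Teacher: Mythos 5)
Your proposal is correct, and the overall skeleton (reduce to a levelwise coequalizer of sets, establish the base-change identity $\widetilde{(D\times_X X_i)}_p = \widetilde{D}_p\times_X X_i$, then conclude by descent for connected components) matches the structure of the paper's proof. The two proofs diverge, however, in how they establish that
\[
\pi_0(\widetilde{D}_p\times_X X_1)\double\pi_0(\widetilde{D}_p\times_X X_0)\to\pi_0(\widetilde{D}_p)
\]
is a coequalizer. You argue topologically: the map of underlying analytic/coarse spaces is open and surjective, hence a quotient map presenting $\widetilde{D}_p$ as the coequalizer in $\mathsf{Top}$, and $\pi_0$ preserves it. The paper argues algebraically: it unfolds the relevant diagram into a cube to see that $\widetilde{D}_p\times_X X_1 = (\widetilde{D}_p\times_X X_0)\times_{\widetilde{D}_p}(\widetilde{D}_p\times_X X_0)$, then takes $K$-valued points for $K$ an algebraic closure of the function field of an irreducible component of $\widetilde{D}_p$; since the étale surjection induces a surjection on such $K$-points, any two components of $\widetilde{D}_p\times_X X_0$ over the same component of $\widetilde{D}_p$ are linked by a point of $\widetilde{D}_p\times_X X_1$, giving injectivity of the map from the coequalizer (surjectivity being easy). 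Your route is shorter and more conceptual, and has the advantage of isolating the essential topological input; the paper's route stays in the algebraic category and avoids any appeal to properties of coarse moduli spaces or the analytic topology.

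Two small cautions about your topological argument. First, the claim that ``$\pi_0$ on topological spaces preserves colimits'' is not true in full generality: $\pi_0$ is left adjoint to the discrete-space functor only when restricted to locally connected spaces, which is what guarantees that the quotient of a set-of-components map is again discrete. This is harmless here because complex analytic spaces (and coarse spaces of DM stacks of finite type over $\C$) are locally connected, but it is worth saying so explicitly. Second, the map induced on coarse spaces by an étale morphism of DM stacks need not itself be étale (stabilizer groups can change), so you should justify openness and surjectivity of the coarse map directly rather than by saying it ``is étale''; for finite flat or étale surjections of complex analytic spaces openness does hold, and surjectivity is clear, so this is fixable but should not be glossed over. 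Neither point is a gap in the mathematics — just in the phrasing.
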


\begin{example}\label{ex:whitney} Consider the {\em Whitney umbrella}
  $D = \{x^2y=z^2\}$ in $X=\mathbb{A}^3\setminus \{y=0\},$ as in
  \cite[Example 6.1.7]{acp}.  Calculating the dual complex $\Delta(D)$ by descent along the degree 2 \'etale cover given by a base change $y = u^2$ gives the
  half interval of Example~\ref{ex:halfinterval}, presented as a quotient of $I(-,[1])$ by the action of $\Z/2\Z$.
\end{example}

\begin{corollary} \label{cor:Delta-to-cone}
  Let $X$ be a smooth variety or DM stack with the toroidal structure
  induced by a normal crossings divisor $D \subset X$.  Then the dual
  complex $\Delta(D)$ is the symmetric $\Delta$-complex associated
  to the smooth generalized cone complex $\Sigma(X)$.
\end{corollary}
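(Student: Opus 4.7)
The plan is to compare the two symmetric $\Delta$-complexes, $\Delta(D)$ as in Definition~\ref{defn:boundary-complex-as-generalized-Delta} and $X_{\Sigma(X)}$ as in \S\ref{sec:cones}, by first verifying they agree in the simple normal crossings case, and then extending to the general normal crossings case via \'etale descent. Choose a surjective \'etale map $X_0 \to X$ from a smooth variety such that $D_0 := D \times_X X_0$ is a simple normal crossings divisor, and set $X_1 = X_0 \times_X X_0$ and $D_1 := D \times_X X_1$, which also has simple normal crossings. By Lemma~\ref{lemma:etale-descent}, $\Delta(D)$ is the coequalizer in symmetric $\Delta$-complexes of $\Delta(D_1) \rightrightarrows \Delta(D_0)$. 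On the other side, the construction of $\Sigma(X)$ for toroidal DM stacks in \cite[\S6]{acp} is set up exactly so that $\Sigma(X)$ is the colimit (in generalized cone complexes) of $\Sigma(X_1) \rightrightarrows \Sigma(X_0)$; under the equivalence of categories between smooth generalized cone complexes and symmetric $\Delta$-complexes established in \S\ref{sec:cones}, this becomes a coequalizer of $X_{\Sigma(X_1)} \rightrightarrows X_{\Sigma(X_0)}$. Hence it suffices to produce a natural isomorphism $\Delta(D') \cong X_{\Sigma(X')}$ in the simple normal crossings case, compatibly with the two parallel maps.

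In the simple normal crossings case, I would unpack both sides directly. Let $X'$ be a smooth variety with a simple normal crossings divisor $D' \subset X'$ having smooth irreducible components $Z_0, \dots, Z_r$. The stratum closures of codimension $p+1$ are precisely the connected components of the various $Z_{i_0} \cap \dots \cap Z_{i_p}$ for injections $\theta \colon [p] \hookrightarrow [r]$, so by the computation of $\widetilde D'_p$ as $\coprod_\theta Z_{\theta(0)} \times_{X'} \cdots \times_{X'} Z_{\theta(p)}$ given in the text, $\Delta(D')_p$ is the set of pairs $(Y,\phi)$ with $Y$ a stratum of codimension $p+1$ and $\phi$ an ordering of the $p+1$ components of $D'$ containing $Y$. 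On the toroidal side, at any closed point of a codimension-$(p+1)$ stratum $Y$, the \'etale local model identifies a neighborhood of the cone over $Y$ inside $\Sigma(X')$ with $\R_{\geq 0}^{p+1}$, where the coordinate axes are canonically labeled by the $p+1$ branches of $D'$ through $Y$ (which, since $D'$ is \emph{simple} normal crossings, correspond bijectively to the components containing $Y$). Face morphisms $\Sigma([p]) \to \Sigma(X')$ thus correspond bijectively to pairs $(Y,\phi)$ of a codimension-$(p+1)$ stratum and an ordering $\phi$ of the components of $D'$ through $Y$. Tracking through the identification of face morphisms with $I$-functoriality shows that this bijection is natural in $[p] \in I^{\mathrm{op}}$, giving the desired isomorphism of symmetric $\Delta$-complexes.

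Finally, I would verify naturality with respect to the \'etale cover to complete the descent argument: the two projections $X_1 \rightrightarrows X_0$ induce, on one hand, the two maps $\Delta(D_1) \rightrightarrows \Delta(D_0)$ by pulling back strata and their local branch orderings, and, on the other hand, the two maps $X_{\Sigma(X_1)} \rightrightarrows X_{\Sigma(X_0)}$ by composing face morphisms with the toroidal pullback. Both are described in terms of pulling back strata together with their (simple) component labelings, so the isomorphisms in the SNC case assemble into a morphism of coequalizer diagrams, yielding $\Delta(D) \cong X_{\Sigma(X)}$. The main subtlety I expect is bookkeeping in the stacky case at the reduction step: one must work with a sufficiently fine \'etale atlas so that strata have no nontrivial monodromy on branches in $X_0$, and check that the pseudofunctoriality of $[p] \mapsto \widetilde D_p$ does not obstruct passage to sets of components, but this is exactly what is encoded by the definitions in \cite{acp} and by the coequalizer formulation, so no new ideas are required beyond those already in place.
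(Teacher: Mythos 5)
Your proof is correct and takes essentially the same approach as the paper's: both sides are expressed as coequalizers of the simple normal crossings case via \'etale descent, and the equivalence of categories between symmetric $\Delta$-complexes and smooth generalized cone complexes preserves colimits. The paper's own proof is terser (it leaves the SNC matching and naturality checks implicit, as consequences of the earlier discussion in \S\ref{sec:dual-compl-norm} and \S\ref{sec:cones}), whereas you spell them out; the underlying argument is identical.
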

\begin{proof}
Let $X_0 \to X$ be an \'etale cover such that $D_0 = D \times_X X_0$ has simple normal crossings in $X_0$.  Let $X_1 = X_0 \times_X X_0$ and $D_1 = D \times_X X_0$. Endow $X_i$ with the toroidal structure induced by the simple normal crossings divisor $D_i$. Then $\Sigma(X_i)$ is the cone over $\Delta(D_i)$, and \cite[Proposition~6.1.2]{acp} describes $\Sigma(X)$ as the coequalizer in generalized cone complexes of $\Sigma(X_1) \double \Sigma(X_0)$.  The result then follows from Lemma~\ref{lemma:etale-descent}, since the identification of symmetric $\Delta$-complexes with smooth
 generalized cone complexes preserves all colimits.
\end{proof}

Most important for our purposes is the special case where
$X = \ocM_{g}$ is the Deligne--Mumford stable curves
compactification of $\cM_{g}$ and
$D = \ocM_{g} \smallsetminus \cM_{g}$ is the boundary divisor.   Modulo the translation from symmetric $\Delta$-complexes to (smooth) generalized cone complexes given in Corollary \ref{cor:Delta-to-cone}, the following statement is one of the main results of \cite{acp} and we refer there for details.  

\begin{corollary}\label{cor_dual_complex}
  The dual complex of the boundary divisor in the moduli space of
  stable curves with marked points
  $\Delta(\ocM_{g} \smallsetminus \cM_{g})$ is $\Dg$.
\end{corollary}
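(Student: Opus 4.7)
The plan is to assemble this corollary from results already in place, so only a short argument is needed. The main inputs are (i) Corollary \ref{cor:Delta-to-cone}, which identifies $\Delta(D)$ with the symmetric $\Delta$-complex associated to the smooth generalized cone complex $\Sigma(X)$ whenever $D \subset X$ is a normal crossings divisor on a smooth DM stack, and (ii) the theorem of Abramovich--Caporaso--Payne identifying the skeleton $\Sigma(\ocM_g)$, with respect to its toroidal structure coming from the boundary divisor $D = \ocM_g \smallsetminus \cM_g$, with the tropical moduli space $\Mg$.

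First I would verify that the hypotheses of Corollary \ref{cor:Delta-to-cone} apply in this setting. The stack $\ocM_g$ is smooth and the complement $D = \ocM_g \smallsetminus \cM_g$ is a normal crossings divisor (with nontrivial monodromy, as stable curves with automorphisms produce self-intersections in the coarse moduli space of boundary strata). Hence $\Delta(D)$ is well defined as a symmetric $\Delta$-complex and coincides with the symmetric $\Delta$-complex associated to $\Sigma(\ocM_g)$.

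Next I would invoke the identification $\Sigma(\ocM_g) \cong \Mg$ of smooth generalized cone complexes from \cite{acp}. Under the equivalence between symmetric $\Delta$-complexes and smooth generalized cone complexes explained in \S\ref{sec:cones}, the symmetric $\Delta$-complex associated to a smooth generalized cone complex $\Sigma$ is by construction the one whose geometric realization is the link of the cone points in $\Sigma$. For $\Mg$ there is a unique cone point $\bullet_g$, and the link is precisely $\Dg$ by Definition \ref{def:dgn}. Concretely, both $\Dg$ and $\Delta(D)$ are obtained as the symmetric $\Delta$-complex whose $p$-simplices parametrize the faces $\Sigma([p]) \to \Mg$; on the tropical side these faces are indexed by $(\Gmw,\tau)$ with $\tau\colon E(\Gmw) \to [p]$ (the second description of $\Dg$ in \S\ref{sec:trop-moduli-space}), while on the algebraic side they are indexed by irreducible components of $\widetilde{D}_p$, and the correspondence is precisely the standard one between boundary strata of $\ocM_g$ and stable weighted graphs of genus $g$.

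The only potential obstacle is bookkeeping: confirming that the combinatorial labels arising on the two sides are matched by the same functor $I^\mathrm{op} \to \mathrm{Sets}$, including the face maps. This amounts to checking that collapsing the edge labelled $i$ on the tropical side corresponds to passing from a codimension-$(p+1)$ stratum to a codimension-$p$ stratum by removing the $i$th chosen analytic branch on the algebraic side, which is part of the standard dictionary encoded in \cite{acp} and recalled in \S\ref{sec:jgn}. Once this is noted, the corollary follows immediately by chaining the two identifications $\Delta(D) \cong (\text{symmetric } \Delta\text{-complex of } \Sigma(\ocM_g)) \cong (\text{symmetric } \Delta\text{-complex of } \Mg) = \Dg$.
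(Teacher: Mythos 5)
Your proposal is correct and matches, almost word for word, the first sentence of the paper's own proof, which observes that modulo the translation supplied by Corollary~\ref{cor:Delta-to-cone} the identification is one of the main results of \cite{acp}. Where you stop there, the paper goes on to give a self-contained outline in the language of symmetric $\Delta$-complexes: it explicitly describes the normalization $\widetilde\cD \simeq \big(\ocM_{g-1,2} \amalg \coprod_{i} \ocM_{i,1}\times\ocM_{g-i,1}\big)/S_2$ and more generally identifies $\widetilde\cD_p$ as the stack of stable curves with $p+1$ marked labeled distinct nodes, then uses irreducibility of the moduli spaces $\ocM_{w(v),r^{-1}(v)}$ (ultimately Deligne--Mumford) to check directly that the connected components of $\widetilde\cD_p$ are in natural bijection with $\Delta_g([p])$, compatibly with the face maps. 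So the ``bookkeeping'' you defer to the ``standard dictionary'' in \cite{acp} is precisely the content the paper chooses to spell out; your version is a valid shorter proof that leans harder on \cite{acp} as a black box, while the paper's version re-derives the key bijection in the new symmetric-$\Delta$-complex framework, which is useful because the functor $I^\mathrm{op} \to \mathsf{Sets}$ is exactly what gets fed into the cellular chain complex later.
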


\subsection{Top weight cohomology}   \label{sec:topweight}

Let $\cX$ be a smooth variety or DM stack of dimension $d$ over $\C$.  The rational singular cohomology of $\cX$, like the rational cohomology of a smooth variety, carries a canonical mixed Hodge structure, in which the weights on $H^k$ are between $k$ and $\min\{2k,2d\}$.  Since the graded pieces $\Gr_j^W H^*(\cX; \Q)$ vanish for $j > 2d$, we refer to $\Gr_{2d}^W H^*(\cX; \Q)$ as the \emph{top weight cohomology} of $\cX$. 

\begin{theorem} \label{thm:topweight}
Let $\cX$ be a smooth and separated DM stack of dimension $d$ with a normal crossing compactification $\ocX$ and let $\cD = \ocX \smallsetminus \cX$.  Then there is a natural isomorphism
\[
\Gr_{2d}^W H^{2d-k}(\cX; \Q) \  \cong  \  H_{k-1}(\Delta (\cD); \Q),
\]
whose codomain is $\widetilde H_{k-1}(|\Delta(\cD)|;\Q)$ when $X$ is irreducible.
\end{theorem}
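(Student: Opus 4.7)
The plan is to combine Deligne's classical computation of the top weight cohomology in the simple normal crossings (SNC) setting with the \'etale descent property of the dual complex, Lemma~\ref{lemma:etale-descent}, to handle the general normal crossings situation on a DM stack.

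In the SNC variety case this is essentially due to Deligne. The proof uses his weight spectral sequence
\[
{}^W\!E_1^{-r, q+r} = H^{q-r}(\widetilde{D}_{r-1}/S_r;\Q)(-r) \Longrightarrow H^q(\cX;\Q),
\]
which degenerates at $E_2$ to $\Gr^W_{q+r}H^q(\cX;\Q)$. Setting $q = 2d-k$ and isolating $\Gr^W_{2d}$ forces $r = k$, so the only $E_1$ terms contributing lie on the diagonal $q+r = 2d$. Each such term $H^{2(d-r)}(\widetilde D_{r-1}/S_r;\Q)(-r)$ is the top cohomology of a smooth proper variety of dimension $d-r$ and thus has a canonical basis given by fundamental classes of connected components, endowed with a sign twist coming from the choice of ordering of the $r$ intersecting branches. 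Concretely, this identifies the term canonically with $C_{r-1}(\Delta(\cD);\Q) = \Q^{\mathrm{sgn}} \otimes_{\Q S_r} \Q[\pi_0(\widetilde D_{r-1})]$ from Definition~\ref{def:cellular-chains-set}, and the $d_1$ Gysin differentials match the cellular boundary maps. Taking cohomology of this complex at position $r = k$ yields the isomorphism $\Gr^W_{2d}H^{2d-k}(\cX;\Q) \cong H_{k-1}(\Delta(\cD);\Q)$ in the SNC variety case. The extension to smooth proper DM stacks with SNC boundary is routine, since the weight spectral sequence and the top-cohomology-as-components description both go through unchanged.

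To remove the SNC hypothesis I would invoke \'etale descent. Choose a surjective \'etale atlas $\ocX_0 \to \ocX$ on which the pullback $\cD_0$ of $\cD$ is SNC, and set $\ocX_1 = \ocX_0 \times_\ocX \ocX_0$ with its pullback divisor $\cD_1$ (which is again SNC). Lemma~\ref{lemma:etale-descent} presents $\Delta(\cD)$ as the coequalizer of the two face maps $\Delta(\cD_1) \double \Delta(\cD_0)$ in symmetric $\Delta$-complexes. Since cellular chains and their homology in nonnegative degree preserve such coequalizers (rationally), this gives a coequalizer description of $H_{k-1}(\Delta(\cD);\Q)$. On the cohomology side, the MHS on $H^*(\cX;\Q)$ is computed from the hypercover $\ocX_\bullet$ via cohomological descent, with the weight filtration compatible term by term—this is part of Deligne's original construction of mixed Hodge structures in the singular setting via simplicial resolutions. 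The SNC result applied level-wise identifies the two coequalizer diagrams, producing the desired isomorphism. When $\cX$ is irreducible, $\Delta(\cD)_{-1}$ is a singleton and Proposition~\ref{prop:cellularhomology} converts cellular homology to reduced singular homology $\widetilde H_{k-1}(|\Delta(\cD)|;\Q)$, yielding the form stated in the theorem.

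The main technical obstacle will be the careful sign bookkeeping required to identify Deligne's Gysin differentials with the cellular boundary maps of Definition~\ref{def:cellular-chains-set}; the sign representation enters naturally from the orientation data on the log-Poincar\'e residue maps and needs to be matched against the $\mathrm{sgn}$-coinvariants in the cellular chain complex. A secondary but purely formal issue is checking that the \'etale descent spectral sequence respects the weight filtration strictly enough to induce the claimed coequalizer on $\Gr^W_{2d}$.
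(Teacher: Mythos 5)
Your approach to the simple normal crossings case is fine and is a standard alternative to what the paper does. The paper instead computes $W_0H^j(\cD;\Q)$ via the complex of restriction maps between the cohomology groups of compactified strata, then uses the long exact sequence of the pair $(\ocX,\cD)$ to get $W_0H^k_c(\cX;\Q) \cong H^{k-1}(\Delta(\cD);\Q)$, and dualizes by Poincar\'e duality at the level of weight-graded pieces. Your route directly through Deligne's weight spectral sequence and the Gysin differentials is the Poincar\'e dual of this and leads to the same place; either version works (the paper also verifies the needed purity for smooth proper DM stacks).

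However, your reduction from normal crossings to simple normal crossings has a genuine gap. You claim that because Lemma~\ref{lemma:etale-descent} gives $\Delta(\cD)$ as a coequalizer of $\Delta(\cD_1) \double \Delta(\cD_0)$, ``cellular chains and their homology in nonnegative degree preserve such coequalizers (rationally).'' The cellular chains functor does preserve colimits (it is a left adjoint), so $C_*(\Delta(\cD);\Q)$ is the cokernel of a difference map, but \emph{homology} of chain complexes does not preserve coequalizers; passing to homology generically produces a long exact sequence, not a right-exact sequence. On the cohomology side the situation is worse: \'etale descent expresses $H^*(\cX;\Q)$ as the totalization of a cosimplicial object built from \emph{all} levels $\cX_p$, $p \geq 0$, not merely as a coequalizer of $H^*(\cX_0)\double H^*(\cX_1)$. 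You flag the compatibility of the weight filtration with this descent spectral sequence as a ``purely formal issue,'' but extracting $\Gr^W_{2d}$ from the hypercover spectral sequence and matching it against the chain-level coequalizer is exactly where the argument would need to be completed, and you have not done so. One would need to show that, in the relevant weight and degree, the spectral sequence collapses so that only $\cX_0$ and $\cX_1$ contribute, and simultaneously that the corresponding cokernel/kernel computations on both sides line up.

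The paper avoids all of this. Instead of \'etale descent on cohomology, it reduces to simple normal crossings by a canonical sequence of blowups of boundary strata of $\ocX$, producing a new compactification $\ocX'$ with SNC boundary $\cD'$ of the \emph{same} open stack $\cX$. Since $\cX$ is unchanged, there is nothing to compare on the cohomology side. On the dual complex side, it checks (using local toric computations and the colimit presentation of Lemma~\ref{lemma:etale-descent}, but now at the purely combinatorial level) that $\Delta(\cD')$ is the barycentric subdivision of $\Delta(\cD)$. Since barycentric subdivision does not change the geometric realization, $H_{k-1}(\Delta(\cD');\Q)\cong H_{k-1}(\Delta(\cD);\Q)$ with no need to commute homology past any colimit. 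This is both more elementary and more robust than the descent approach; if you want to rescue your version you would have to supply the missing spectral sequence and weight-strictness arguments in some detail.
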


\begin{proof}
First, we reduce to the case where $\cD$ has simple normal crossings, by a finite sequence of blowups, as follows.  Let $\ocX' \rightarrow \ocX$ be the morphism obtained by first blowing up the zero-dimensional strata of $\cD$, and then the strict transforms of the 1-dimensional strata, and so on.  Then one readily checks that $\cD' = \ocX' \smallsetminus \ocX$ has simple normal crossings and $\Delta(\cD')$ is the barycentric subdivision of $\Delta(\cD)$, as defined in \S\ref{sec:subdivision}.

We may therefore assume that $\cD$ has simple normal crossings.   The remainder of the argument is essentially identical to
the proof for simple normal crossings divisors in algebraic varieties given in \cite[Sections~2 and 4]{boundarycx}. The one additional fact needed to go from varieties to DM stacks is that the cohomology of a smooth DM stack $\ocX$ with projective coarse moduli space $X$ is pure. To see this, note that the natural map $\ocX \to X$ induces an isomorphism $H^*(X;\Q) \to H^*(\ocX;\Q)$ (see \cite{Behrend04} or \cite[Theorem~4.40]{Edidin13}) and, since $X$ is a compact K\"ahler $V$-manifold, its cohomology is pure \cite[Theorem~2.43]{PetersSteenbrink08}.

We briefly recall the argument: there is a complex of $\Q$-vector spaces
\[
0 \rightarrow \bigoplus_{i=1}^r H^j(\cD_i;\Q) \xrightarrow{\delta_0} \bigoplus_{i_0 < i_1} H^j(\cD_{i_0} \times_{\ocX}  \cD_{i_1};\Q) \xrightarrow{\delta_1} \bigoplus_{i_0 < i_1 < i_2} H^j(\cD_{i_0}  \times_{\ocX} \cD_{i_1} \times_{\ocX} \cD_{i_2};\Q) \xrightarrow{\delta_2} \cdots,
\]
whose cohomology gives the $j$-graded pieces of the weight filtrations on the cohomology groups of $\cD$; see \cite[Chapter~4, \S2]{KulikovKurchanov98}.  Set $j=0$; then the long exact sequence associated to the pair $(\ocX,\cD)$, together with the fact that $H^k(\ocX,\Q)$ is pure of weight $k$, allows one to relate the cohomology of the above complex to $\Gr^W_0H^\bullet_c(\cX;\Q)$, which is isomorphic by Poincar\'e duality to $\Gr^W_{2d}H^{2d-\bullet}(\cX;\Q)^\vee$.  
\end{proof}

\section{Applications} \label{sec:applications}

We now proceed to use the identification of top weight cohomology of $\cM_g$ with reduced homology of the symmetric $\Delta$-complex $\Delta_g$ developed in the preceding sections, 
 in combination with known nonvanishing and vanishing results for graph homology and cohomology of $\cM_g$, to prove the applications stated in the introduction.

{ \renewcommand{\thetheorem}{\ref{thm:comparison}}
\begin{theorem}  
There is an isomorphism
\[
\Gr_{6g-6}^W H^{6g-6-k} (\cM_{g}; \Q) \xrightarrow{\cong} \widetilde{H}_{k-1}(|\Dg|;\Q) ,
\]
identifying the reduced rational homology of $\Dg$ with the top graded piece of the weight filtration on the cohomology of $\cM_{g}$.
\end{theorem}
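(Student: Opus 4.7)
The plan is to deduce Theorem~\ref{thm:comparison} as a straightforward consequence of the two main structural results already established in the paper: Theorem~\ref{thm:topweight} (computing top weight cohomology of a smooth DM stack as the reduced homology of its boundary complex) and Corollary~\ref{cor_dual_complex} (identifying the boundary complex of $\ocM_g \setminus \cM_g$ with $\Delta_g$). Essentially the theorem is a specialization of the first to the geometric setting described by the second.

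First I would set $\cX = \cM_g$, which is a smooth, separated, irreducible DM stack over $\C$ of dimension $d = 3g-3$, and take $\ocX = \ocM_g$ to be its Deligne--Mumford stable-curves compactification, with boundary divisor $\cD = \ocM_g \setminus \cM_g$. A key input is that $\cD$ is a normal crossings divisor in $\ocM_g$ (this is classical, and already used implicitly in Corollary~\ref{cor_dual_complex}). This puts us squarely in the hypothesis of Theorem~\ref{thm:topweight}, which then yields a natural isomorphism
\[
\Gr_{2d}^W H^{2d-k}(\cM_g;\Q) \;\xrightarrow{\cong}\; H_{k-1}(\Delta(\cD);\Q),
\]
and substituting $2d = 6g-6$ gives the left-hand side of the desired statement with the correct weight and cohomological degree.

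Second, I would convert the codomain from abstract cellular homology of a symmetric $\Delta$-complex to reduced singular homology of its geometric realization. Since $\cM_g$ (equivalently $\ocM_g$) is irreducible, the set $\Delta(\cD)([-1])$ of connected components of $\ocM_g$ is a singleton, so by the final clause of Proposition~\ref{prop:cellularhomology} the cellular homology group $H_{k-1}(\Delta(\cD);\Q)$ agrees with the reduced singular homology $\widetilde H_{k-1}(|\Delta(\cD)|;\Q)$; note that the stabilizer hypothesis on symmetric-group actions is automatic with $\Q$-coefficients. Finally, Corollary~\ref{cor_dual_complex} provides the identification $\Delta(\cD) \cong \Delta_g$ of symmetric $\Delta$-complexes, and hence a canonical homeomorphism $|\Delta(\cD)| \cong |\Delta_g|$, which transports the right-hand side into the form stated in the theorem.

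I do not anticipate a substantive obstacle: all the hard work---both the Hodge-theoretic computation of top weight cohomology via the normalization complex (with proper care of stack-theoretic subtleties and monodromy) and the combinatorial identification of the boundary complex with the tropical moduli space---has been carried out in Sections~\ref{sec:graphs}--\ref{sec:stackboundary}. The only point requiring a brief comment is naturality: the composite isomorphism is canonical because each of the two ingredients is, and in particular (as noted in Remark~\ref{remark:induced-by-map-of-spaces}) the resulting surjection can be promoted to the map of pairs $(\ocM_g, \partial \ocM_g) \to (CX, |X|)$ obtained from the normalization $\widetilde{\cD} \to \ocM_g$ via the construction of Example~\ref{example:image-as-realization}, which justifies the concluding assertion about the specific geometric origin of the isomorphism.
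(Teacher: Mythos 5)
Your proposal is correct and follows exactly the paper's own argument: apply Theorem~\ref{thm:topweight} with $\cX = \cM_g$, $\ocX = \ocM_g$, and identify $\Delta(\cD)$ with $\Delta_g$ via Corollary~\ref{cor_dual_complex}. The extra details you supply (irreducibility to get reduced homology, the stabilizer hypothesis over $\Q$, and the naturality remark) are accurate and consistent with the paper, just more explicit than its very brief proof.
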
  
\addtocounter{theorem}{-1} }

\begin{proof}
Let $\cD = \ocM_g \smallsetminus \cM_g$.  Then $\Delta_g$ is naturally identified with the dual complex $\Delta(\cD)$, by Corollary~\ref{cor_dual_complex}.  The theorem is therefore the special case of Theorem~\ref{thm:topweight} where $\cX = \cM_g$ and $\ocX = \ocM_g$.
\end{proof}

We now prove our nonvanishing result for $H^{4g-6}(\cM_g;\Q)$.

{ \renewcommand{\thetheorem}{\ref{thm:nonvanishing}}
\begin{theorem}
The cohomology $H^{4g-6}(\cM_g;\Q)$ is nonzero for $g =3$, $g = 5$, and $g \geq 7$.  In fact, $\dim H^{4g-6}(\cM_g;\Q)$ grows at least exponentially; precisely, $$\dim H^{4g-6}(\cM_g;\Q) > \beta^g + \text{constant}$$ for any 
$\beta< \beta_0$, where $\beta_0 \approx 1.3247\ldots$ is the real root of $t^3-t-1=0$. 

\end{theorem}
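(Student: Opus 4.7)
The plan is to reduce the statement to a lower bound on $\dim H_0(G^{(g)})$ and then carry out a generating-function calculation. First I would combine Theorem~\ref{thm:comparison} and Theorem~\ref{thm:gc}, specialized to $k=0$, to obtain a surjection
$$H^{4g-6}(\cM_g;\Q)\twoheadrightarrow H_0(G^{(g)}),$$
so it suffices to bound $\dim H_0(G^{(g)})$ from below. Willwacher's theorem identifies the graded dual $\bigoplus_g H_0(G^{(g)})^\vee$ with $H^0(\mathsf{GC})\cong \mathfrak{grt}_1$, and Brown's theorem then produces an injective Lie-algebra homomorphism $\mathrm{Lie}(V)\hookrightarrow \mathfrak{grt}_1$, where $V$ is the graded $\Q$-vector space with one generator $\sigma_{2i+1}$ in each odd degree $2i+1\ge 3$. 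Dualizing gives $\dim H_0(G^{(g)})\ge \dim\mathrm{Lie}(V)_g$ in each degree $g$.

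Next I would exhibit explicit nonzero classes in $\mathrm{Lie}(V)_g$ to cover the qualitative part: the generator $\sigma_3$ for $g=3$, the generator $\sigma_5$ for $g=5$, the generator $\sigma_g$ for odd $g\ge 7$, and for even $g\ge 8$ the bracket $[\sigma_3,\sigma_{g-3}]$, which is nonzero in the free Lie algebra because $g-3\ge 5$ is odd and distinct from $3$. This handles exactly the values $g=3$, $g=5$, and $g\ge 7$ listed in the theorem.

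For the exponential bound I would compute the Poincar\'e series of $\mathrm{Lie}(V)$. Set $A_n=\dim\mathrm{Lie}(V)_n$; the generating function of $V$ is $f(t)=t^3/(1-t^2)$, so the tensor algebra $U(\mathrm{Lie}(V))=T(V)$ has Poincar\'e series $1/(1-f(t))$. By Poincar\'e-Birkhoff-Witt, $U(\mathrm{Lie}(V))\cong S(\mathrm{Lie}(V))$ as graded vector spaces, giving
$$\frac{1}{1-f(t)}=\prod_{d\ge 1}(1-t^d)^{-A_d}.$$
Applying $t\tfrac{d}{dt}\log(\cdot)$ to both sides and expanding geometrically yields
$$p(t):=\frac{t^3(3-t^2)}{(1-t^2)(1-t^2-t^3)}=\sum_{n\ge 0}a_n t^n,\qquad a_n=\sum_{d\mid n}d A_d,$$
so by M\"obius inversion $A_n=\tfrac{1}{n}\sum_{d\mid n}\mu(n/d)\,a_d$. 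A partial-fractions analysis of $p(t)$ shows that its pole of smallest modulus is the real root $\alpha\approx 0.75488$ of $1-t^2-t^3$; hence $a_n\sim \beta_0^n$ with $\beta_0=1/\alpha$, and a direct check shows that $\beta_0$ satisfies $t^3-t-1=0$. The $d=n$ term eventually dominates the M\"obius sum (the remaining $d\le n/2$ contribute only $O(\beta_0^{n/2})$), giving $A_n>\beta^n+\mathrm{constant}$ for any $\beta<\beta_0$.

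The main substantive input is Brown's injectivity, which is what lets us pass from the nonvanishing of brackets in the abstract free Lie algebra $\mathrm{Lie}(V)$ to nonvanishing in $\mathfrak{grt}_1$, and hence to nonvanishing in $H_0(G^{(g)})$. Once this is in place, the rest of the argument consists of formal manipulations of generating functions and a standard residue/singularity analysis, with no additional geometric or homotopical obstacle.
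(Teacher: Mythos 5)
Your proposal is correct and follows essentially the same route as the paper: the surjection $H^{4g-6}(\cM_g;\Q)\twoheadrightarrow H_0(G^{(g)})$ from Theorems~\ref{thm:comparison} and~\ref{thm:gc}, Willwacher's identification $H^0(\mathsf{GC})\cong\mathfrak{grt}_1$, Brown's injection of the free Lie algebra $\mathrm{Lie}(V)$, the case analysis ($\sigma_g$ for odd $g$, $[\sigma_3,\sigma_{g-3}]$ for even $g\ge 8$), and the identical PBW/Witt-formula Poincar\'e series computation with M\"obius inversion and singularity analysis of $p(t)$. The only differences are cosmetic (e.g., spelling out that the $d<n$ terms contribute $O(\beta_0^{n/2})$, or noting explicitly that $\beta_0=1/\alpha$ solves $t^3-t-1=0$), so this matches the paper's argument step for step.
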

\addtocounter{theorem}{-1} }

\begin{proof}
By Theorems~\ref{thm:comparison} and~\ref{thm:gc}, we have a natural surjection $H^{4g-6}(\cM_g) \rightarrow H_0(G^{(g)})$.  
Therefore the result follows from Theorem~\ref{thm:nonvanishing-graph-homology}.
\end{proof}

Note that the nonvanishing unstable cohomology group $\Gr_{12}^W H^6(\cM_3;\Q)$ found by Looijenga \cite{Looijenga93} is identified with the span of $[W_3]$ in $H_0(G^{(3)})$.  Hence, the nonvanishing, unstable, top weight cohomology that we describe, especially those corresponding to the spans of $[W_g]$ for odd $g \geq 5$, may be seen as direct and natural generalizations.

In comparison, the asymptotic size of the tautological ring of $\mathcal{M}_g$ is bounded above by $C^{\sqrt{g}}$ for a constant $C$.  Indeed, its Poincar\'e series is dominated coefficient-wise by that of the polynomial ring
$$\Q[\kappa_1,\kappa_2,\ldots],\qquad \operatorname{deg} \kappa_i = 2i.$$
where $\kappa_i$ has degree $2i$, and $\mathcal{M}_g$ has virtual cohomological dimension $4g-5$ \cite{Harer86}.  A rough bound may be obtained by calculating $\dim \Q[\kappa_1,\kappa_2,\ldots]_{2n} = p(n)$ where $p(n)$ is the number of partitions of $n$.  Since $\log(p(n)) \sim \pi \sqrt{2n/3}$ by \cite[eq.~(5.22)]{HR1917}, the dimension of the tautological ring is bounded by $\sum_{n=1}^{2g-3}p(2n) < 2g\cdot p(2g) < A\cdot B^{\sqrt{g}} + C$ for constants $A$, $C$ and $C$.

On the other hand, the Euler characteristic estimates by Harer--Zagier mentioned earlier imply that the size of the top weight part of $H^{4g-6}(\cM_g;\Q)$ as $g \to \infty$ accounted for here is still negligible in comparison to the entire $H^*(\cM_g;\Q)$ (and hence in comparison to the largest single Hodge number of $\cM_g$).

We also record the following nonvanishing result of odd-degree cohomology groups, as discussed in the introduction:

\begin{corollary}
The cohomology groups $H^{15}(\cM_6;\Q)$, $H^{23}(\cM_8;\Q)$, and $H^{27}(\cM_{10};\Q)$ are nonzero.
\end{corollary}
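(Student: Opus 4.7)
The plan is to reduce this to a known nonvanishing result in graph homology via the surjection from cohomology of $\cM_g$ onto graph homology established earlier in the paper.

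First, I would combine Theorem~\ref{thm:comparison} and Theorem~\ref{thm:gc}. The former identifies $\Gr^W_{6g-6} H^{6g-6-k}(\cM_g;\Q)$ with $\widetilde H_{k-1}(\Delta_g;\Q)$, and the latter identifies ${H}_j(G^{(g)})$ with $\widetilde H_{2g+j-1}(\Delta_g;\Q)$. Setting $k-1 = 2g+j-1$, i.e.\ $k = 2g+j$, the composition gives an isomorphism
\[
\Gr^W_{6g-6} H^{4g-6-j}(\cM_g;\Q) \xrightarrow{\cong} H_j(G^{(g)}),
\]
and composing with the surjection $H^{4g-6-j}(\cM_g;\Q) \twoheadrightarrow \Gr^W_{6g-6}H^{4g-6-j}(\cM_g;\Q)$ onto the top weight quotient yields
\[
H^{4g-6-j}(\cM_g;\Q) \twoheadrightarrow H_j(G^{(g)}).
\]

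Next, I would specialize to $g = 6$ and $j = 3$, so that $4g-6-j = 15$, giving a surjection
\[
H^{15}(\cM_6;\Q) \twoheadrightarrow H_3(G^{(6)}).
\]
It therefore suffices to show $H_3(G^{(6)}) \neq 0$. This is precisely the nonvanishing graph homology class for genus $6$ in homological degree $3$ found by Bar-Natan and McKay \cite{BarNatanMcKay}, whose existence I would cite directly rather than reprove. The class is represented by an explicit cycle in the graph complex, and its nontriviality is verified by exhibiting a cocycle in $\mathsf{GC}$ pairing nontrivially with it.

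The main (and essentially only) obstacle is the appeal to the Bar-Natan–McKay computation: everything else is a bookkeeping consequence of the general machinery developed earlier in the paper. Assuming that computation, the corollary follows immediately from the surjection above.
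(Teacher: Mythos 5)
Your proof matches the paper's argument exactly: the paper likewise deduces the claim by combining Theorems~\ref{thm:comparison} and \ref{thm:gc} to obtain a surjection $H^{4g-6-j}(\cM_g;\Q) \twoheadrightarrow H_j(G^{(g)})$ and then citing the Bar-Natan--McKay computation that $H_3(G^{(6)}) \neq 0$. The degree bookkeeping ($g=6$, $j=3$, $4g-6-j=15$) is correct.
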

\begin{proof}
By Theorems~\ref{thm:comparison} and~\ref{thm:gc}, the nontrivial classes in $H_3(G^{(6)})$, $H_3(G^{(8)})$, and $H_7(G^{(10)})$ discovered computationally in \cite{BarNatanMcKay} implies nonvanishing of $H^{15}(\cM_6;\Q)$, $H^{23}(\cM_8;\Q)$, and $H^{27}(\cM_{10};\Q)$. 
\end{proof}
The computations of \cite{BarNatanMcKay} are extended, albeit by approximate (floating point) calculations, in \cite{KhoroshkinWillwacherZivkovic17}, where it is also shown that $\bigoplus_{j\ge 0} H^{2j+1}(\mathsf{GC})$ is infinite dimensional.    

Combining \cite[Corollary 6]{KhoroshkinWillwacherZivkovic17} with Theorems~\ref{thm:comparison}, \ref{thm:gc}, and \ref{thm:nonvanishing-graph-homology} yields the following dimension bound on top-weight odd-degree cohomology.

\begin{corollary} For each $g\ge2$, we have:
\begin{equation*}
\sum_{\substack{g\ge g' \ge (2g+2)/3,\,\,i\ge 0}} 
\dim\Gr_{6g'-6}^W H^{2i+1} (\cM_{g'}; \Q) > \beta^g  + \text{constant},
\end{equation*}
for any
$\beta< \beta_0$, where $\beta_0 \approx 1.3247\ldots$ is the real root of $t^3-t-1=0$.  
\end{corollary}
\medskip

We conclude with an application in the other direction, using known vanishing results for $\cM_g$ to reprove a recent vanishing result of Willwacher for graph homology.

{ \renewcommand{\thetheorem}{\ref{thm:graphhom}}
\begin{theorem}
The graph homology groups $H_k(G^{(g)})$ vanish for $k < 0$.
\end{theorem}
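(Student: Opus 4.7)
The plan is to chain together Theorems~\ref{thm:gc} and~\ref{thm:comparison} to identify each graph homology group $H_k(G^{(g)})$ with a specific graded piece of the rational cohomology of $\cM_g$, and then invoke known vanishing results to conclude the proof for $k<0$. Concretely, Theorem~\ref{thm:gc} provides $H_k(G^{(g)}) \cong \widetilde H_{2g+k-1}(\Delta_g;\Q)$, and setting $j = 2g+k$ in Theorem~\ref{thm:comparison} yields
\begin{equation*}
  H_k(G^{(g)}) \;\cong\; \widetilde H_{2g+k-1}(\Delta_g;\Q) \;\cong\; \Gr^W_{6g-6} H^{4g-6-k}(\cM_g;\Q).
\end{equation*}
In particular, $H_k(G^{(g)})$ is a subquotient of $H^{4g-6-k}(\cM_g;\Q)$, and so vanishing of the latter forces vanishing of the former.

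Now suppose $k < 0$. I would separate into two cases according to the cohomological degree $4g-6-k$. If $k \leq -2$, then $4g-6-k \geq 4g-4 > 4g-5$, which exceeds the virtual cohomological dimension of $\cM_g$ computed by Harer~\cite{Harer86}. Hence $H^{4g-6-k}(\cM_g;\Q) = 0$ outright, and the corresponding graded piece vanishes. If $k = -1$, the cohomological degree is exactly $4g-5$, the virtual cohomological dimension itself; here one invokes the stronger vanishing $H^{4g-5}(\cM_g;\Q) = 0$ established by Church--Farb--Putman~\cite{ChurchFarbPutman12} and independently by Morita--Sakasai--Suzuki~\cite{MoritaSakasaiSuzuki13}. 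Either way, $\Gr^W_{6g-6} H^{4g-6-k}(\cM_g;\Q) = 0$ and thus $H_k(G^{(g)}) = 0$.

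No serious obstacle is anticipated: the entire argument is a bookkeeping exercise once the two main identification theorems of the paper are in hand, and the only nontrivial external inputs are Harer's vcd computation and the vanishing of $H^{4g-5}(\cM_g;\Q)$. The only subtlety to flag is the boundary case $k = -1$, which cannot be handled by Harer's theorem alone and genuinely requires the Church--Farb--Putman/Morita--Sakasai--Suzuki vanishing at the top of the cohomological range.
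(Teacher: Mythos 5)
Your proof is correct and follows essentially the same route as the paper: chain Theorems~\ref{thm:gc} and~\ref{thm:comparison} to realize $H_k(G^{(g)})$ as a (sub)quotient of $H^{4g-6-k}(\cM_g;\Q)$, then kill that group using Harer's vcd bound for $k\le -2$ and the Church--Farb--Putman/Morita--Sakasai--Suzuki vanishing of $H^{4g-5}(\cM_g;\Q)$ for $k=-1$. The explicit case split is a minor expository refinement of the paper's one-line deduction, not a different argument.
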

\addtocounter{theorem}{-1} }

\begin{proof}
The virtual cohomological dimension of $\cM_g$ is $4g-5$ \cite{Harer86}.  Furthermore, $H^{4g-5}(\cM_g;\Q)$ vanishes \cite{ChurchFarbPutman12, MoritaSakasaiSuzuki13}.  Therefore $H^{4g-6-k}(\cM_g;\Q)$ vanishes for $k < 0$.  The theorem follows, since $H^{4g-6-k}(\cM_g;\Q)$ surjects onto  $H_{k}(G^{(g)})$.
\end{proof}

\section{Hyperbolic surfaces}

Here, we use the hyperbolic model for $\cM_g$ to construct the proper map $\ellmap\colon \cM_g \to M_g^\trop$ announced in~\eqref{eq:4}, and to give an explicit interpretation of the classes in $H_{4g-6}(\cM_g;\Q)$ arising from our main results.  More details will appear in a sequel.

In the hyperbolic interpretation of $\cM_g$, points are hyperbolic metrics on a closed genus $g$ surface $\Sigma$, up to isometric diffeomorphisms.  
Let $\cM_g^\mathrm{thick} \subset \cM_g$ denote the subspace given in the hyperbolic model for $\cM_g$ as those hyperbolic surfaces in which no non-trivial geodesic has length less than $\epsilon$, for a suitably small $\epsilon > 0$.  Then $\cM_g^{\mathrm{thick}} \subset \cM_g$ is a deformation retract \cite[p.~476]{HarerZagier86}.  Equivalently, Harvey's Borel--Serre type compactification of $\cM_g$ or the Kato--Nakayama space associated to the boundary divisor in $\ocM_g$ may be used instead of $\cM_g^\mathrm{thick}$.
Its boundary consists of hyperbolic surfaces with at least one geodesic of length $\epsilon $, but it is better regarded as an orbifold with corners: it is covered by  orbifold charts of the form
\begin{equation*}
  \R_{\geq 0}^S \times \R^T \to \cM_g^\mathrm{thick}
\end{equation*}
for finite sets $S$ and $T$ (varying from chart to chart).

\subsection{A map of spaces}
\label{sec:induced-by-map-of-spaces}

Let $h$ be a hyperbolic metric on a closed oriented 2-manifold $\Sigma$ of genus $g$ and let $\Gamma$ be the dual graph of the nodal 2-manifold obtained from $\Sigma$ by collapsing all closed geodesics of length smaller than some suitable $\epsilon$, chosen once and for all.  Each $e \in E(\Gamma)$ then corresponds to a simple closed geodesic in $(\Sigma,h)$ of length $a_e< \epsilon$, and we let $\ell(e) = -\log(a_e/\epsilon)$.  For sufficiently small $\epsilon > 0$, this recipe $[\Sigma,h] \mapsto (\Gamma,\ell)$ defines a (well defined) proper map
\begin{equation*}
  \lambda\colon \cM_g \to M_g^\trop.
\end{equation*}
This is a model for the map~\eqref{eq:4} in the introduction.

\subsection{Generalizations of abelian cycles}
\label{sec:gener-abel-cycl}

The injection $H_k(G^{(g)})^\vee \to H_{4g-6-k}(\cM_g;\Q)$ allows us to produce non-zero homology classes in the mapping class group from classes in $\mathfrak{grt}_1 \cong H^0(\mathsf{GC}) \cong \prod_g H_0(G^{(g)})^\vee$.  It is natural to ask for a more explicit description of the resulting homology classes.  In this section we shall outline how to transport a class represented by a cocycle $\alpha\colon G^{(g)}_k \to \Q$ through these isomorphisms.

\newcommand{\usedtobeA}{K}  %% A_p looks like alternating group.  Chose K for now -- like "K"ato--Nakayama

\begin{definition}
  For $p \geq -1$, let $\usedtobeA_p$ be the space of isometry classes of pairs consisting of a hyperbolic genus $g$ surface in $\cM_g^\mathrm{thick}$ together with an ordered $(p+1)$-tuple of distinct geodesics of length $\epsilon$, considered up to isometry of the surface preserving the ordered tuple of geodesics.  Then $S_{p+1} = I([p],[p])$ acts on $\usedtobeA_p$ by permuting the geodesics, and $d_i \colon \usedtobeA_{p+1} \rightarrow \usedtobeA_p$ is induced by forgetting the $i^{\mbox{th}}$ geodesic, defining a functor $I^\mathrm{op} \to \mathrm{Spaces}$.

  In particular, $\usedtobeA_{-1}$ is the coarse space of the orbifold $\cM_g^\mathrm{thick}$.  Let $\partial^p \cM_g^\mathrm{thick}$ denote the image of the map $\usedtobeA_p \to \usedtobeA_{-1}$ induced by $\emptyset \subset [p]$.
\end{definition}

The symmetric $\Delta$-complex defined as $[p] \mapsto \pi_0(\usedtobeA_p)$ is isomorphic to $\Delta_g$.  This may be seen by identifying the orbifold underlying $\usedtobeA_p$ with an $(S^1)^{p+1}$-bundle over the complex analytic orbifold underlying $\widetilde{D}_p \smallsetminus d_0(\widetilde{D}_{p+1})$, up to homotopy, or, more directly, by sending a hyperbolic surface with $(p+1)$ ordered labeled geodesics to the dual graph of the nodal 2-manifold obtained by collapsing the geodesics.

A cochain $G^{(g)}_p \to \Q$ is naturally identified (by extending to zero on graphs with non-zero weights) with a cochain $\alpha \in C^p(\Delta_g;\Q)$.  By definition, such a cochain is a function $\alpha\colon \Delta_g([p]) = \pi_0(\usedtobeA_p) \to \Q$ which is alternating under the action of $S_{p+1}$ on $\usedtobeA_p$.  Hence we may regard such a cochain as an element $\alpha \in H^0(\usedtobeA_p;\Q)$
on which a permutation $\sigma \in S_{p+1}$ acts as $\mathrm{sgn}(\sigma)$.  Such a cochain is a cocycle exactly when it is in the kernel of 
\numberwithin{equation}{section}
\begin{equation}\label{eq:new-7.1}
  (H^0(\usedtobeA_p;\Q) \otimes \Q^{\mathrm{sgn}})^{S_{p+1}} \xrightarrow{\sum (-1)^i (d_i)^*} (H^0(\usedtobeA_{p+1};\Q) \otimes \Q^{\mathrm{sgn}})^{S_{p+2}}.
\end{equation}
% Now, each $\usedtobeA_p$ is a rational homology manifold with boundary, and comes with a canonical orientation $[\usedtobeA_p] \in H_{d-p}(\usedtobeA_p,\partial \usedtobeA_p;\Q)$, where $d = 6g-7$; this comes from identifying the orbifold underlying $\usedtobeA_p$ with an $(S^1)^{p+1}$ bundle over the complex analytic orbifold underlying $\widetilde{D}_p$, and combining the orientation on $\widetilde{D}_p$ coming from its complex structure with the orientation on the fibers of the bundle induced by the ordering of the geodesics.  The subset $d_i(\partial K_{p+1}) \subset \partial K_p$ is independent of $i$, and we have homomorphisms

Next we wish to apply Poincare duality to $K_p$, which is a compact rational homology manifold with boundary.  We first pin down orientations, i.e., fundamental classes $[K_p] \in H_{d-p}(K_p, \partial K_p)$.  The subset $d_i(\partial K_{p+1}) \subset \partial K_p$ is independent of $i$, and we have homomorphisms
\begin{equation*}\label{eq:2}
    H_{d-p}(K_p,\partial K_p) \xrightarrow{\delta} H_{d-p-1}(\partial \usedtobeA_p,d_i(\partial \usedtobeA_{p+1})) \xleftarrow{(d_i)_*}
    H_{d-p-1}(\usedtobeA_{p+1},\partial \usedtobeA_{p+1}),
\end{equation*}
where we write $\delta$ for the connecting homomorphism of the triple.  The map $(d_i)_*$ is an isomorphism by excision, and the orientations are chosen such that $[K_{p+1}] = ((d_0)_*)^{-1} \circ \delta ([K_p])$, which forces $((d_i)_*)^{-1} \circ \delta ([K_p]) = (-1)^i [K_{p+1}]$ and $\sigma_*([K_p]) = \mathrm{sgn}(\sigma)[K_p]$ for $\sigma \in S_{p+1}$.

%
% These orientations are \emph{compatible}: this means that $\sigma \in S_{p+1}$ acts on $[\usedtobeA_p]$ as $\mathrm{sgn}(\sigma)$, and that the homomorphism $(d_i)_*\colon H_{d-p-1}(\usedtobeA_{p+1},\partial \usedtobeA_{p+1}) \to H_{d-p-1}(\partial \usedtobeA_p,d_i(\partial \usedtobeA_{p+1}))$ sends $[\usedtobeA_{p+1}]$ to the image of $[\usedtobeA_p]$ under the connecting homomorphism for the triple $$d_i(\partial \usedtobeA_{p+1}) \subset \partial \usedtobeA_p \subset \usedtobeA_p.$$  
%
Poincar\'e duality, i.e., cap product with these fundamental classes, now identifies the homomorphism in~\eqref{eq:new-7.1} with a homomorphism
\numberwithin{equation}{section}
\begin{equation}\label{eq:16}
    H_{d-p}(\usedtobeA_p,\partial \usedtobeA_p;\Q)_{S_{p+1}} \xrightarrow{\sum ((d_i)_*)^{-1} \circ \delta} H_{d-p-1}(\usedtobeA_{p+1},\partial \usedtobeA_{p+1};\Q)_{S_{p+2}},
\end{equation}
where the signs in both the $S_{p+1}$ action and the boundary homomorphism have canceled with those in the fundamental classes. A cocycle $\alpha \in C^p(\Delta_g;\Q)$ gives a Poincar\'e dual $\mathrm{PD}([\alpha]) \in H_{d-p}(\usedtobeA_p,\partial \usedtobeA_p;\Q)_{S_{p+1}}$ in the kernel of~(\ref{eq:16}).  Mapping into $\usedtobeA_{-1}$ sends all spaces into $\partial^p \cM_g^\mathrm{thick}$ and the map~(\ref{eq:16}) fits into a commutative square
\begin{equation*}
  \xymatrix{
    H_{d-p}(\usedtobeA_p,\partial \usedtobeA_p;\Q)_{S_{p+1}} \ar[d]^{1/(p+1)!}\ar[r]^-{(\ref{eq:16})} & H_{d-p-1}(\usedtobeA_{p+1},\partial \usedtobeA_{p+1};\Q)_{S_{p+2}} \ar[d]^{1/(p+2)!}\\
    H_{d-p}(\partial^p \cM_g^\mathrm{thick},\partial^{p+1} \cM_g^\mathrm{thick};\Q) \ar[r] &
    H_{d-p-1}(\partial^{p+1} \cM_g^\mathrm{thick},\partial^{p+2} \cM_g^\mathrm{thick};\Q),
    }
\end{equation*}
where the bottom row is the connecting homomorphism for the triple.  The class $\mathrm{PD}([\alpha])$ in the upper left corner therefore maps to a class in $H_{d-p}(\partial^p \cM_g^\mathrm{thick},\partial^{p+1} \cM_g^\mathrm{thick})$, which admits a lift to homology relative to $\partial^{p+2} \cM_g^\mathrm{thick}$.  Since that space has no homology above degree $(d-p-2)$, another long exact sequence shows that this class lifts uniquely to $H_{d-p}(\partial^p \cM_g^\mathrm{thick};\Q)$.  By a similar argument, one checks that the image of this class in $H_{d-p}(\partial^{p-1} \cM_g^\mathrm{thick};\Q)$ is unchanged by adding a coboundary to $\alpha$, and hence one gets a well defined class in $H_{d-p}(\cM_g^\mathrm{thick};\Q)$ depending only on the cohomology class $[\alpha] \in H^p(\Delta_g;\Q)$.

In the special case where $p = 3g-4$, generators of $C^p(\Delta_g;\Q)$ are trivalent graphs and automatically cocycles since $C^{p+1}(\Delta_g;\Q) = 0$, and the resulting classes in $$H_{3g-3}(\cM_g;\Q) = H_{3g-3}(\Mod_g;\Q)$$ are exactly the abelian cycles associated to maximal collections of commuting Dehn twists.  In this way, homology classes on $\cM_g$ associated to graph cohomology classes in $H^*(G^{(g)})$ may be seen as generalizations of abelian cycle classes for the mapping class group.

\bibliographystyle{amsalpha}
\bibliography{math}

\end{document}